\def\theequation{\@arabic\c@equation}
\newcommand{\mas}{\operatorname{Mas}}
\newcommand{\gaD}{\gamma_{{}_D}}
\newcommand{\Mas}{\operatorname{Mas}}
\newcommand{\Mor}{\operatorname{Mor}}
\newcommand{\beq}{\begin{equation}}
\newcommand{\enq}{\end{equation}}
\newcommand{\mor}{\operatorname{Mor}}
\newcommand{\bbR}{{\mathbb{R}}}
\newcommand{\R}{{\mathbb{R}}}
\newcommand{\bbC}{{\mathbb{C}}}
\newcommand{\cB}{{\mathcal B}}
\newcommand{\cX}{{\mathcal X}}
\newcommand{\no}{\nonumber}
\newcommand{\lb}{\label}
\newcommand{\rank}{\text{\rm{rank}}}
\newcommand{\ran}{\text{\rm{ran}}}
\newcommand{\bi}{\bibitem}
\newcommand{\dist}{\operatorname{dist}}
\newcommand{\mi}{\operatorname{Mas}}
\newcommand{\mo}{\operatorname{Mor}}
\numberwithin{equation}{section}
\newcommand{\dom}{\operatorname{dom}}
\newcommand{\Sp}{\operatorname{Spec}}
\renewcommand{\ker}{\operatorname{ker}}
\theoremstyle{plain}
\newtheorem{theorem}{Theorem}[section]
\newtheorem{lemma}[theorem]{Lemma}
\newtheorem{proposition}[theorem]{Proposition}
\theoremstyle{definition}
\newtheorem{definition}[theorem]{Definition}
\newtheorem{remark}[theorem]{Remark}
\newtheorem{claim}[theorem]{Claim}
\title{The Maslov and Morse indices for Schr\"odinger operators on $[0, 1]$}
\author[P.\ Howard and A.\ Sukhtayev]{P.\ Howard and A.\ Sukhtayev}
\address{Mathematics Department,
Texas A\&M University, College Station, TX 77843, USA}
\email{phoward@math.tamu.edu}
\email{alim@math.tamu.edu}
\date{\today}
\keywords{Maslov index, Morse index, Schr\"odinger equation, eigenvalues}
\begin{document}

\begin{abstract} 
Assuming a symmetric potential and separated self-adjoint boundary conditions, 
we relate the Maslov and Morse indices for Schr\"odinger operators 
on $[0, 1]$. We find that the Morse index can be computed in terms of 
the Maslov index and two associated matrix eigenvalue problems. This provides
an efficient way to compute the Morse index for such operators.  
\end{abstract}

\maketitle

\section{Introduction} \lb{s1}

We consider eigenvalue problems 
\begin{equation} \label{eq:hill}
\begin{aligned}
Hy := -y'' + V(x)y &= \lambda y \\
\alpha_1y(0)+\alpha_2y'(0) &= 0 \\
\beta_1y(1)+\beta_2y'(1)&=0,
\end{aligned}
\end{equation}
where $y \in \mathbb{R}^n$, $V \in C([0, 1])$ is a symmetric  
matrix in $\mathbb{R}^{n \times n}$, and $\alpha_1$, $\alpha_2$,
$\beta_1$, and $\beta_2$ are real-valued $n \times n$ matrices 
such that 
\begin{alignat}{2} 
\rank\begin{bmatrix} \alpha_1 & \alpha_2\end{bmatrix}&= n; 
&\qquad \rank\begin{bmatrix} \beta_1 & \beta_2\end{bmatrix}&=n, \label{RN} \\
\alpha_1\alpha^t_2-\alpha_2\alpha^t_1 &= 0_{n\times n};
&\qquad \beta_1\beta^t_2-\beta_2\beta^t_1&=0_{n\times n}, \label{4.4}
\end{alignat}
where we use superscript $t$ to denote matrix transpose, anticipating 
the use of superscript $T$ to denote transpose in a complex Hilbert
space described below. If \eqref{RN}--\eqref{4.4} hold then without
loss of generality we can take
\begin{equation}\lb{4.4new}
\begin{aligned}
 \alpha_1\alpha^t_1+\alpha_2\alpha^t_2&=I, \\
\beta_1\beta^t_1+\beta_2\beta^t_2&=I
\end{aligned}
\end{equation}
(see, for example, \cite[page 108]{K}).

In particular, we are interested in counting the number of negative
eigenvalues for $H$ (i.e., the Morse index). We proceed by relating 
the Morse index to the Maslov index, which is described in 
Section \ref{maslov_section}. In essence, we'll find that the 
Morse index can be computed in terms of the Maslov index, and that
while the Maslov index is less elementary than the Morse index, 
it's relatively straightforward to compute in the current setting.

The Maslov index has its origins in the work of V. P. Maslov 
\cite{Maslov1965a} and subsequent development by V. I. Arnol'd
\cite{arnold67}. It has now been studied extensively, both 
as a fundamental geometric quantity \cite{BF98, CLM, F, P96, rs93}
and as a tool for counting the number of eigenvalues on specified
intervals \cite{BJ1995, BM2015, CDB09, CDB11, Chardard2009, 
CJLS2014, DJ11, FJN03, J88, J88a, JM2012}. In this latter context, 
there has been a strong resurgence of interest following the 
analysis by Deng and Jones (i.e., \cite{DJ11}) for multidimensional
domains. Our aim in the current analysis is to rigorously develop
a relationship between the Maslov index and the Morse index in 
the relatively simple setting of (\ref{eq:hill}), and to take
advantage of this setting to compute the Maslov index directly 
for example cases so that these properties can be illustrated 
and illuminated. Our approach is adapted from \cite{CJLS2014, DJ11},

As a starting point, we define what we will mean by a {\it Lagrangian
subspace}.

\begin{definition} \label{lagrangian_subspace}
We say $\ell \subset \mathbb{R}^{2n}$ is a Lagrangian subspace
if $\ell$ has dimension $n$ and
\begin{equation*}
(Jx, y)_{\mathbb{R}^{2n}} = 0, 
\end{equation*} 
for all $x, y \in \ell$. Here, $(\cdot, \cdot)_{\mathbb{R}^{2n}}$ denotes
Euclidean inner product on $\mathbb{R}^{2n}$, and  
\begin{equation*}
J = 
\begin{pmatrix}
0 & -I_n \\
I_n & 0
\end{pmatrix},
\end{equation*}
with $I_n$ the $n \times n$ identity matrix. We sometimes adopt standard
notation for symplectic forms, $\omega (x,y) = (Jx, y)_{\mathbb{R}^{2n}}$.
\end{definition}

A simple example, important for intuition, is the case $n = 1$, for which 
$(Jx, y)_{\mathbb{R}^{2}} = 0$ if and only if $x$ and $y$ are linearly 
dependent. In this case, we see that any line through the origin is a 
Lagrangian subspace of $\mathbb{R}^2$. As a foreshadowing of further 
discussion, we note that each such Lagrangian subspace can be identified
with precisely two points on the unit circle $S^1$. 

More generally, any Lagrangian subspace of $\mathbb{R}^{2n}$ can be
spanned by a choice of $n$ linearly independent vectors in 
$\mathbb{R}^{2n}$. We will generally find it convenient to collect
these $n$ vectors as the columns of a $2n \times n$ matrix $\mathbf{X}$, 
which we will refer to as a {\it frame} for $\ell$. 

Lagrangian subspaces arise naturally in the current setting if we consider
the shooting problem in which we evolve forward the family of solutions 
of (\ref{eq:hill}) that satisfy only the left boundary condition 
(i.e., the condition at $0$). In this setting, it will be natural to 
view (\ref{eq:hill}) as a first order system with $p = y$, $q = y'$,
and $\mathbf{p} = {p \choose q}$. We obtain 
\begin{equation} \label{first_order}
\frac{d \mathbf{p}}{dx} = \mathbb{A} (x; \lambda) \mathbf{p},
\end{equation}   
where 
\begin{equation*}
\mathbb{A} (x; \lambda)
=
\begin{pmatrix}
0 & I_n \\
-\lambda I_n + V & 0
\end{pmatrix}.
\end{equation*}

Let $\{\mathbf{p}_j (x)\}_{j=1}^n = \{{p_j (x) \choose q_j (x)}\}_{j=1}^n$ 
denote any collection of $n$ linearly 
independent vectors in $\mathbb{R}^{2n}$ satisfying the left boundary 
conditions  
\begin{equation*} 
\alpha_1 p_j (0) + \alpha_2 q_j (0) = 0 \quad \forall j \in \{1, 2, \dots, n\},
\end{equation*}
and evolving according to (\ref{first_order}). For example, using 
(\ref{4.4}) we can take the vectors $\{p_j (0)\}_{j=1}^n$ to be the 
columns of $\alpha_2^t$, and likewise the vectors $\{q_j (0)\}_{j=1}^n$
to be the columns of $-\alpha_1^t$. We denote by $X (x)$ the 
$n \times n$ matrix obtained by taking each ${p}_j (x)$ as a column, 
and we denote by $Z(x)$ the $n \times n$ matrix 
obtained by taking each $q_j (x)$ as a column. 
We will verify in Theorem \ref{th:lagrange} that the $2n \times n$ 
matrix $\mathbf{X} := {X \choose Z}$ is the frame for a Lagrangian 
subspace that we will denote $\ell (x, \lambda)$. Notice that 
$\ell (x, \lambda)$ varies as $x$ and $\lambda$ vary, and in particular
if we choose any path $\Gamma$ in the $x$-$\lambda$ plane we can 
consider the evolution of $\ell$ along this path. 

Continuing to view this process as a shooting argument, we can take as our 
{\it target} the Lagrangian subspace associated with the boundary condition 
at $x = 1$. It's clear that if $\ell (1, \lambda)$ intersects this 
Lagrangian subspace then $\lambda$ is an eigenvalue of $H$, and also 
that the geometric multiplicity of $\lambda$ corresponds precisely with
the dimension of intersection. In order to clarify the nature of this 
target space, we let    
$\{\mathbf{p}_j^{(1)}\}_{j=1}^n = \{{{p}_j^{(1)} \choose {q}_j^{(1)}}\}_{j=1}^n$
denote any collection of $n$ linearly independent (constant) vectors satisfying 
the right boundary conditions 
\begin{equation*} 
\beta_1 p_j^{(1)} + \beta_2 q_j^{(1)} = 0 \quad \forall j \in \{1, 2, \dots, n\}.
\end{equation*}
For example, we see from (\ref{4.4}) that we can take the vectors 
$\{p_j^{(1)} \}_{j=1}^n$ to be the columns of $\beta_2^t$, and likewise the 
vectors $\{q_j^{(1)}\}_{j=1}^n$ to be the columns of $-\beta_1^t$.
Let $X_1$ denote the $n \times n$ matrix comprising $\{p_j^{(1)}\}_{j=1}^n$
as its columns, and let $Z_1$ denote the $n \times n$ matrix comprising 
$\{q_j^{(1)}\}_{j=1}^n$ as its columns. We see that 
$\mathbf{X}_1 := {X_1 \choose Z_1}$ is a frame
for the Lagrangian subspace $\ell_1$ that can be viewed as our target.

We can now ask the following questions: (1) as $\ell (x, \lambda)$ 
evolves, for what values of $x$ and $\lambda$ does it intersect 
$\ell_1$?; (2) what is the dimension of these intersections?; and 
(3) what is the direction of these intersections? Geometrically, the 
Maslov index is precisely a count of these intersections, including 
both multiplicity and direction. 

We will find it productive to fix $s_0 > 0$ (taken sufficiently small
during the analysis) and $\lambda_{\infty} > 0$ (taken sufficiently
large during the analysis), and to consider the rectangular path 
\begin{equation*}
\Gamma = \Gamma_1 \cup \Gamma_2 \cup \Gamma_3 \cup \Gamma_4,
\end{equation*}
where the paths $\{\Gamma_i\}_{i=1}^4$ are depicted in 
Figure \ref{F1}.

\begin{figure}[h]
 \scalebox{1.25}{
\begin{picture}(100,100)(-20,0)
\put(-75,-2){$-\lambda_{\infty}$}
\put(-65,8){\line(0,1){4}}
\put(80,5){\vector(0,1){95}}
\put(-65,20){\line(0,1){60}}
\put(-65,80){\vector(0,-1){40}}
\put(-74,10){\vector(1,0){190}}
\put(70.5,40){\text{\tiny $\Gamma_2$}}
\put(-63,60){\text{\tiny $\Gamma_4$}}
\put(-82,24){\rotatebox{90}{\text{\tiny no conjugate}}}
\put(-75,34){\rotatebox{90}{\text{\tiny points}}}
\put(84,34){\rotatebox{90}{\text{\tiny conjugate}}}
\put(91,40){\rotatebox{90}{\text{\tiny points}}}
\put(45,73){\text{\tiny $\Gamma_3$}}
\put(43,13){\text{\tiny $\Gamma_1$}}
\put(100,12){$\lambda$}
\put(83,98){$s$}
\put(80,20){\vector(0,1){30}}
\put(83,-5){$0$}
\put(-65,20){\line(1,0){145}}
\put(-10,20){\vector(1,0){50}}
\put(-65,80){\line(1,0){145}}
\put(80,80){\vector(-1,0){55}}
\put(82,78){$1$}
\put(82,18){$s_0 $}
\put(70,20){\circle*{4}}
\put(80,60){\circle*{4}}
\put(80,70){\circle*{4}}
\put(20,80){\circle*{4}}
\put(40,80){\circle*{4}}
\put(60,80){\circle*{4}}
\put(-5,87){{\tiny \text{$H$-eigenvalues}}}
\put(-60,24){{\tiny \text{$V(0) - (P_{R_0} \Lambda_0 P_{R_0})^2,B$-eigenvalues}}}
\end{picture}}
\caption{Schematic of the path $\Gamma = \Gamma_1 \cup \Gamma_2 \cup \Gamma_3 \cup \Gamma_4$}.\label{F1}
\end{figure}
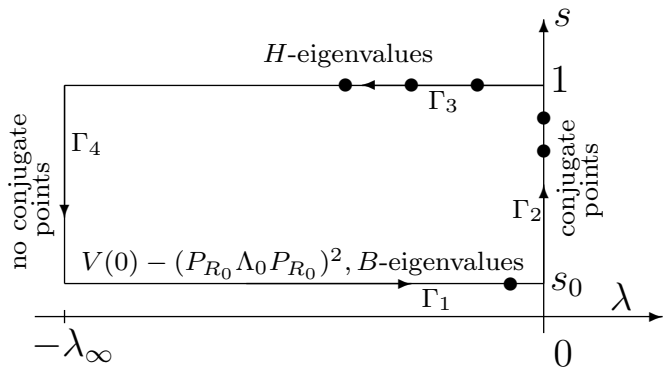 

As discussed, for example, in \cite{CLM}, the Maslov index enjoys path 
additivity so that 
\begin{equation*}
\text{Mas} (\ell, \ell_1; \Gamma) = 
\text{Mas} (\ell, \ell_1; \Gamma_1) 
+ \text{Mas} (\ell, \ell_1; \Gamma_2)
+ \text{Mas} (\ell, \ell_1; \Gamma_3)
+ \text{Mas} (\ell, \ell_1; \Gamma_4).
\end{equation*}  
In addition, the Maslov index is homotopy invariant,
and it follows immediately that the Maslov index around
any closed path will be 0, so that 
\begin{equation*}
\text{Mas} (\ell, \ell_1; \Gamma) = 0.
\end{equation*}

Our analysis is primarily concerned with understanding 
each of the four quantities 
\begin{equation*}
\{\text{Mas} (\ell, \ell_1; \Gamma_i)\}_{i=1}^4.
\end{equation*} 
As a start, we note that in the setting of eigenvalue 
problems such as (\ref{eq:hill}) it's natural to view
the Maslov index along $\Gamma_2$ as a distinguished 
value, and we will designate it the {\it Principal Maslov
Index}. In our setting, this is a readily computable 
quantity, and we will develop a framework for computing 
it, and compute values of it in particular cases. 

We will show that $\text{Mas} (\ell, \ell_1; \Gamma_3)$
is precisely the Morse index of $H$ that we're trying 
to compute, and that given any $0< s_0 < 1$, $\lambda_{\infty} > 0$ 
can be chosen sufficiently large so that  
$\text{Mas} (\ell, \ell_1; \Gamma_4) = 0$. In the case
of Dirichlet boundary conditions we'll find that $s_0$ can
be chosen sufficiently small so that 
$\text{Mas} (\ell, \ell_1; \Gamma_1) = 0$, in which case
we get the very simple relationship 
\begin{equation*}
\text{Mor} (H) = - \text{Mas} (\ell, \ell_1; \Gamma_2).
\quad \quad \tag{\text{Dirichlet case}}
\end{equation*}
More generally, we can have crossings along the bottom 
shelf (i.e., $\Gamma_1$), and in order to efficiently 
characterize these we'll adapt an elegant theorem
from \cite{BK} (see also an earlier version in 
\cite{Kuchment2004}).  

\begin{theorem}[Adapted from \cite{BK}]
Let $\alpha_1$ and $\alpha_2$ be as described in 
(\ref{RN})-(\ref{4.4}). Then there exist three orthogonal
(and mutually orthogonal) projection matrices 
$P_D$ (the Dirichlet projection), $P_N$ (the Neumann 
projection), and $P_R = I - P_D - P_N$ (the Robin
projection), and an invertible self-adjoint
operator $\Lambda$ acting on the space $P_R \mathbb{R}^n$
such that the boundary condition 
\begin{equation*}
\alpha_1 y(0) + \alpha_2 y'(0) = 0
\end{equation*} 
can be expressed as 
\begin{equation*}
\begin{aligned}
P_D y(0) &= 0 \\
P_N y'(0) &= 0 \\
P_R y'(0) &= \Lambda P_R y(0).
\end{aligned}
\end{equation*}
Moreover, $P_D$ can be constructed as the projection 
onto the kernel of $\alpha_2$ and $P_N$ can be 
constructed as the projection onto the kernel of 
$\alpha_1$. Construction of the operator $\Lambda$
will be discussed in the following remark. Precisely
the same statement holds for $\beta_1$ and $\beta_2$
for the boundary condition at $x = 1$.
\end{theorem}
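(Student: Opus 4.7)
The plan is to view the boundary condition $\alpha_1 y(0) + \alpha_2 y'(0) = 0$ as determining the Lagrangian subspace
\begin{equation*}
\ell := \{(u,v) \in \mathbb{R}^{2n} : \alpha_1 u + \alpha_2 v = 0\},
\end{equation*}
which is $n$-dimensional by \eqref{RN} and isotropic by \eqref{4.4}, and to set $K_D := \ker \alpha_2$, $K_N := \ker \alpha_1$. The whole argument rides on two trivial identifications: $(u, 0) \in \ell$ iff $u \in K_N$, and $(0, v) \in \ell$ iff $v \in K_D$.

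The first step will be to establish $K_D \perp K_N$ in $\mathbb{R}^n$, which is immediate: for $u \in K_N$ and $v \in K_D$, applying the symplectic form to $(u,0), (0,v) \in \ell$ gives $0 = \omega((u,0),(0,v)) = \langle u, v \rangle_{\mathbb{R}^n}$. This legitimizes defining $P_D$ and $P_N$ as the orthogonal projections onto $K_D$ and $K_N$, and $P_R := I - P_D - P_N$ as the orthogonal projection onto $K_R := (K_D \oplus K_N)^\perp$. Applying $\omega$ again, this time with an arbitrary $(a,b) \in \ell$ tested against $(u, 0)$ and $(0, v)$, yields $b \perp K_N$ and $a \perp K_D$, so that $P_D a = 0$ and $P_N b = 0$. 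Consequently every $(a,b) \in \ell$ decomposes as $(a,b) = (P_N a, 0) + (0, P_D b) + (P_R a, P_R b)$; the first two summands lie in $\ell$ by construction, so the third does as well.

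I would then identify $\ell_R := \{(P_R a, P_R b) : (a,b) \in \ell\} \subset (K_R \oplus K_R) \cap \ell$ as the graph of $\Lambda$. The decomposition above is a genuine direct sum (the three summands live in linearly independent coordinate blocks), so $\dim \ell_R = n - \dim K_D - \dim K_N = \dim K_R$; combined with the isotropy inherited from $\ell$, this makes $\ell_R$ a Lagrangian subspace of $K_R \oplus K_R$. If $(0,b) \in \ell_R$ then $b \in K_D \cap K_R = \{0\}$, so the first-coordinate projection $\ell_R \to K_R$ is injective and hence bijective, exhibiting $\ell_R$ as the graph of a linear map $\Lambda : K_R \to K_R$. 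The symmetric argument using $K_N \cap K_R = \{0\}$ gives invertibility of $\Lambda$, and self-adjointness $\langle \Lambda a, a' \rangle = \langle a, \Lambda a' \rangle$ reads off directly from the isotropy relation $\omega((a, \Lambda a), (a', \Lambda a')) = 0$. The three conditions in the theorem are then simply the statement that $(y(0), y'(0)) \in \ell$ decomposes in this way. The only real obstacle I anticipate is the opening orthogonality $K_D \perp K_N$; without it the decomposition $\mathbb{R}^n = K_D \oplus K_N \oplus K_R$ does not exist and the program collapses. Once that is in hand, the rest is symplectic bookkeeping and a dimension count.
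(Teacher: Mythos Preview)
Your argument is correct and complete. The paper itself does not supply a proof of this theorem; it is stated as ``Adapted from \cite{BK}'' and only the subsequent Remark sketches the construction of $\Lambda$, via the unitary matrix $\mathcal{U} = -(\alpha_1 - i\alpha_2)^{-1}(\alpha_1 + i\alpha_2)$ and the Cayley transform $\Lambda = -i(\mathcal{U}+I)_R^{-1}(\mathcal{U}-I)$. So the approach the paper points to is the complex-unitary route of \cite{BK}: encode the boundary condition as a unitary $\mathcal{U}$, read off $P_D$, $P_N$, $P_R$ from the eigenspaces of $\mathcal{U}$ at $\pm 1$, and recover $\Lambda$ by the Cayley transform on the remaining block.

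Your route is genuinely different and more elementary: you stay entirely within real symplectic linear algebra, exploiting isotropy of the Lagrangian $\ell$ to get $K_D \perp K_N$ and then a graph description on the Robin block. This avoids complexification and the machinery of \cite{BK} entirely. What you lose is the explicit closed-form expression $\Lambda = \alpha_2^{-1}\alpha_1 P_R$ (on $\ran \alpha_1 P_R$) that the paper's Remark extracts from the unitary picture and later uses in the Neumann-based discussion; your $\Lambda$ is defined only implicitly as the map whose graph is $\ell_R$. If you want to match the paper's subsequent computations, you would still need to verify that your $\Lambda$ agrees with $\alpha_2^{-1}\alpha_1$ on $K_R$, which follows readily from $\alpha_1 a + \alpha_2 \Lambda a = 0$ for $a \in K_R$.
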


\begin{remark}[Construction of $\Lambda$] \label{lambda_remark}
Let $\mathcal{U}$ denote the unitary matrix 
\begin{equation*}
\mathcal{U} = - (\alpha_1 - i \alpha_2)^{-1} (\alpha_1 + i \alpha_2), 
\end{equation*}
where the inverse is guaranteed to exist by our assumptions 
(see Lemma 1.4.7 of \cite{BK}). Let $(\mathcal{U} + I)_R$ denote 
the restriction of $(\mathcal{U} + I)$ to the space 
$P_R \mathbb{R}^n$, so that $(\mathcal{U} + I)_R$ is invertible.
Then 
\begin{equation*}
\Lambda = -i (\mathcal{U} + I)_R^{-1} (\mathcal{U} - I).
\end{equation*}     
It follows that $\alpha_2$ is invertible on the range of 
$\alpha_1 P_R$, and $\Lambda = \alpha_2^{-1} \alpha_1 P_R$.
\end{remark}

\begin{definition}
Let $(P_{D_0}, P_{N_0}, P_{R_0}, \Lambda_0)$ denote the 
projection quadruplet associated with our boundary conditions at $x = 0$,
and let $(P_{D_1}, P_{N_1}, P_{R_1}, \Lambda_1)$ denote the projection 
quadruplet associated with our boundary conditions at $x = 1$.
We denote by $B$ the self-adjoint operator obtained by restricting 
$(P_{R_0} \Lambda_0 P_{R_0} - P_{R_1} \Lambda_1 P_{R_1})$ to 
the space $(\ker P_{D_0}) \cap (\ker P_{D_1})$.
\end{definition}

In Section \ref{schrodinger_section}, we will verify the general
relationship
\begin{equation*}
\text{Mas} (\ell, \ell_1; \Gamma_1) = - \text{Mor} (B)
- \text{Mor} (Q(V(0) - (P_{R_0} \Lambda_0 P_{R_0})^2)Q),
\end{equation*}
where $Q$ denotes the projection matrix onto the null space  of 
$B$. 

We see immediately that if $\alpha_2, \beta_2 = 0$ so that 
$\alpha_1, \beta_1$ have full rank, we obtain 
\begin{equation*}
(P_{D_0}, P_{N_0}, P_{R_0}, \Lambda_0) = (I, 0, 0, 0),
\end{equation*}
and 
\begin{equation*}
(P_{D_1}, P_{N_1}, P_{R_1}, \Lambda_1) = (I, 0, 0, 0).
\end{equation*}
In this case, $B = 0$, and is restricted to the domain 
$(\ker P_{D_0}) \cap (\ker P_{D_1}) = \{0\}$. 
This corresponds with the Dirichlet case mentioned 
above, for which 
$\text{Mas} (\ell, \ell_1; \Gamma_1) = 0$. In particular, 
we have observed that if 
$(\ker P_{D_0}) \cap (\ker P_{D_1}) = \{0\}$ then $Q \equiv 0$.

On the other extreme, suppose $\alpha_2, \beta_2$ both have
full rank (the {\it Neumann-based case}), so that $P_{D_0} = 0$ 
and $P_{D_1} = 0$, and
consequently $(\ker P_{D_0}) \cap (\ker P_{D_1}) = \mathbb{R}^n$.
Focusing on the condition at $x = 0$, we notice that this 
implies $P_{R_0} = I - P_{N_0}$. In this way, $\mathbb{R}^n$
can be decomposed as 
\begin{equation*}
\mathbb{R}^n = P_{N_0} (\mathbb{R}^n) \oplus
P_{R_0} (\mathbb{R}^n),
\end{equation*}
and since $P_{N_0}$ corresponds with projection onto the kernel
of $\alpha_1$ we see that $P_{R_0}$ corresponds with projection
onto the range of $\alpha_1^t$. We conclude that 
$P_{R_0} \alpha_1^t = \alpha_1^t$. Likewise, since $\alpha_1$
annihilates $P_{N_0} (\mathbb{R}^n)$ we see that 
$\alpha_1 P_{R_0} = \alpha_1$. We have, then, 
using Remark \ref{lambda_remark},
\begin{equation*}
P_{R_0} \Lambda_0 P_{R_0} = - P_{R_0} \alpha_2^{-1} \alpha_1 P_{R_0} 
= - P_{R_0} \alpha_2^{-1} \alpha_1.
\end{equation*}
But according to our condition 
$\alpha_1 \alpha_2^t = \alpha_2 \alpha_1^t$, we have 
$\alpha_2^{-1} \alpha_1 = \alpha_1^t (\alpha_2^t)^{-1}$ 
so that 
\begin{equation*}
- P_{R_0} \alpha_2^{-1} \alpha_1 = - \alpha_2^{-1} \alpha_1;
\quad \text{i.e., } P_{R_0} \Lambda_0 P_{R_0} = - \alpha_2^{-1} \alpha_1.
\end{equation*}
We conclude that in this case (where $\alpha_2, \beta_2$ both have
full rank) we have   
\begin{equation*}
\Mas (\ell, \ell_1; \Gamma_1) = 
- \Mor (\beta_2^{-1} \beta_1 - \alpha_2^{-1} \alpha_1)
- \Mor (Q(V(0) - (\alpha_2^{-1} \alpha_1)^2)Q),
\end{equation*}
where in this case $Q$ is a projection onto the null space of 
$B = \beta_2^{-1} \beta_1 - \alpha_2^{-1} \alpha_1$. 

We are now prepared to state the main result of our analysis.

\begin{theorem} \label{main}
For system (\ref{eq:hill}), let $V \in C([0, 1])$ be a symmetric matrix 
in $\mathbb{R}^{n \times n}$, and let $\alpha_1$, $\alpha_2$,
$\beta_1$, and $\beta_2$ be as in (\ref{RN})-(\ref{4.4}). In addition, 
let $Q$ denote projection onto the kernel of $B$, and make the 
non-degeneracy assumption 
$0 \notin \sigma (Q (V(0)-(P_{R_0} \Lambda_0 P_{R_0})^2) Q)$. Then we have 
\begin{equation*}
\Mor (H) = - \Mas (\ell, \ell_1; \Gamma_2)
+ \Mor (B) + \Mor (Q(V(0) - (P_{R_0} \Lambda_0 P_{R_0})^2)Q).
\end{equation*}  
\end{theorem}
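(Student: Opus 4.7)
The plan is to exploit the two defining properties of the Maslov index already invoked in the introduction: path additivity and homotopy invariance. The rectangular path $\Gamma = \Gamma_1 \cup \Gamma_2 \cup \Gamma_3 \cup \Gamma_4$ is a closed loop in the $(\lambda, s)$-plane on which $\ell(s,\lambda)$ is continuously defined (by continuous dependence of solutions of (\ref{first_order}) on $s$ and $\lambda$), and the target $\ell_1$ is constant along the loop. Homotopy invariance combined with path additivity therefore yields
\begin{equation*}
\Mas(\ell,\ell_1;\Gamma_1) + \Mas(\ell,\ell_1;\Gamma_2) + \Mas(\ell,\ell_1;\Gamma_3) + \Mas(\ell,\ell_1;\Gamma_4) = 0.
\end{equation*}
The strategy is to identify three of the four summands explicitly and then solve the resulting linear relation for $\Mor(H)$.

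Along $\Gamma_3$ the shooting endpoint is frozen at $s = 1$ while $\lambda$ decreases from $0$ to $-\lambda_\infty$. Crossings of $\ell(1,\lambda)$ with $\ell_1$ occur precisely when $\lambda$ is an eigenvalue of $H$, and the dimension of intersection equals the geometric multiplicity of that eigenvalue. The direction of each crossing is controlled by a crossing form built from $\partial_\lambda \mathbb{A}(x;\lambda) = \mathrm{diag}(0, -I_n)$; a direct computation against the frame $\mathbf{X}(x;\lambda)$ shows that this form is sign-definite, so every crossing has the same orientation. Tracking the sign convention together with the direction of traversal on $\Gamma_3$, each negative eigenvalue contributes its multiplicity, producing $\Mas(\ell,\ell_1;\Gamma_3) = \Mor(H)$.

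For $\Gamma_4$ we argue as follows. Since $V$ is continuous on the compact interval $[0,1]$, the family of Schr\"odinger operators obtained by restricting to any subinterval $[0,s]$ with $s\in[s_0,1]$ under our separated self-adjoint boundary conditions is uniformly bounded below; choosing $\lambda_\infty$ larger than this uniform lower bound rules out crossings on the line $\lambda=-\lambda_\infty$, giving $\Mas(\ell,\ell_1;\Gamma_4)=0$. For $\Gamma_1$ we invoke the formula stated just before the theorem,
\begin{equation*}
\Mas(\ell,\ell_1;\Gamma_1) \;=\; -\Mor(B) \;-\; \Mor\!\bigl(Q(V(0)-(P_{R_0}\Lambda_0 P_{R_0})^2)Q\bigr),
\end{equation*}
whose proof is deferred to Section \ref{schrodinger_section}. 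The non-degeneracy hypothesis $0 \notin \sigma(Q(V(0)-(P_{R_0}\Lambda_0 P_{R_0})^2)Q)$ ensures that this second Morse index is unambiguous, because crossings on $\Gamma_1$ are then isolated for $s_0$ sufficiently small. Substituting these three identifications into the additivity relation and solving for $\Mor(H)$ reproduces the stated formula.

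The principal obstacle is the $\Gamma_1$ identification itself, which I am treating as given: $\ell(s_0,\lambda)$ is the result of propagating the initial Lagrangian subspace across the \emph{short} interval $[0,s_0]$, and the small-$s_0$ asymptotic analysis must translate the boundary-data algebra, via the Dirichlet/Neumann/Robin decomposition adapted from \cite{BK}, into the two matrix Morse indices appearing on the right. The other delicate point is purely bookkeeping: the orientation of $\Gamma_3$ (which sweeps $\lambda$ in the negative direction), the sign of the crossing form relative to the adopted Maslov convention, and the orientation of crossings along $\Gamma_1$ as $\lambda$ increases must all be reconciled, so that the three signs combine to yield precisely the formula asserted in the theorem.
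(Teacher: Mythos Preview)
Your proposal is correct and follows essentially the same approach as the paper: the proof is organized around the closed-loop identity $\sum_{i=1}^4 \Mas(\ell,\ell_1;\Gamma_i)=0$, with $\Gamma_3$ yielding $\Mor(H)$ via monotonicity in $\lambda$, $\Gamma_4$ contributing zero by a uniform lower-bound argument, and the $\Gamma_1$ contribution identified through the perturbation analysis of Section~\ref{schrodinger_section}. Your honest acknowledgment that the $\Gamma_1$ formula is the substantive content being invoked matches the paper's own division of labor; the only minor discrepancy is that the non-degeneracy hypothesis is used in the paper not to isolate crossings but to guarantee that the second-order term in the eigenvalue expansion \eqref{eq1.84} determines the sign of each bifurcating eigenvalue when the first-order term $\lambda_j^{(1)}$ vanishes.
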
  

\begin{remark} In the event that 
$0 \in \sigma (Q (V(0)-(P_{R_0} \Lambda_0 P_{R_0})^2) Q)$, our method
still applies, but the resulting expression for $\Mor (H)$ has additional
terms that arise from a higher order perturbation expansion. 
\end{remark}

\begin{remark} As noted in the lead-in to Theorem \ref{main}, we have
an especially straightforward relation for the Dirichlet case, 
\begin{equation*}
\Mor (H) = - \Mas (\ell, \ell_1; \Gamma_2).
\end{equation*}
In particular, since $B$ is restricted to the space 
$(\ker P_{D_0}) \cap (\ker P_{D_1})$,
we see that this relation holds if the boundary condition on either 
side is Dirichlet. 
\end{remark}

\begin{remark} Our emphasis on the negative eigenvalues of $H$ (the Morse index) 
is simply a convention, and we could similarly develop a theorem counting the number 
of eigenvalues of $H$ below any other fixed real value $\lambda_0 \in \mathbb{R}$. 
In this case, the number of eigenvalues less than $\lambda_0$ would be related
to the Maslov index of a path $\Gamma_2^0$ with $\lambda = \lambda_0$ fixed,
and $s$ going from $s_0$ to $1$ (along with appropriate perturbation terms). This,
of course, would allow us to determine the number of eigenvalues of $H$ on any 
interval $[\lambda_1, \lambda_2] \subset \mathbb{R}$.   
\end{remark}

\begin{remark} As we will briefly discuss in Section \ref{schrodinger_section} 
(see Remark \ref{sturm_liouville_remark}),
the standard Sturm-Liouville oscillation theorem for $n=1$ (relating the zeros of 
an eigenfunction to the position of its associated eigenvalue in the sequence
of all eigenvalues; e.g, Theorem XIII.7.50 in \cite{DS} or Theorem 8.4.5 in \cite{At}) 
follows in a straightforward manner from Theorem \ref{main}.
In this way, Theorem \ref{main} can reasonably be viewed as a generalization 
of this theory to the current $n$-dimensional setting. The nature of this 
generalization is especially elegant in the case that the boundary conditions
at $x=1$ are Dirichlet (see Remark \ref{dirichlet_remark} in Section 
\ref{monotonicity_in_s}).

We note that there is a long history of such generalizations, including 
Arnol'd's seminal work with the Maslov index in the 1960's \cite{arnold67}.
For a related approach that does not directly refer to the Maslov index, 
see Chapter 10 in \cite{At}. To the best of our knowledge Theorem \ref{main} 
is the most complete such theorem in the current setting.     
\end{remark}

The paper is organized as follows. In Section \ref{maslov_section} we 
give a precise definition of the Maslov index, suitable for the 
current analysis, and summarize some of its properties. In Section 
\ref{schrodinger_section} we analyze the Maslov index in 
the setting of (\ref{eq:hill}), proving Theorem \ref{main},
and in Section \ref{applications_section} we discuss several 
applications intended to illustrate our results.

\section{The Maslov index} \label{maslov_section}

In this section, we review a definition of the Maslov index
appropriate for the current analysis, and outline some of its
salient properties. We note that several alternative 
definitions are available (see, for example, \cite{CLM}),
all with generally the same properties.
 
Recalling Definition \ref{lagrangian_subspace}, we consider 
the collection of all Lagrangian subspaces of $\mathbb{R}^{2n}$,
which we designate the {\it Lagrangian Grassmannian} and 
denote $\Lambda (n)$. Let $\Sigma \subset \mathbb{R}$ denote
an index interval, and consider any continuous path of Lagrangian
subspaces $\Upsilon: \Sigma \to \Lambda (n)$. Given a fixed 
Lagrangian subspace $\ell_1$ (the {\it target} space, which for 
us will be associated with data at $x = 1$), we will define the 
Maslov index $\text{Mas} (\Upsilon, \ell_1; \Sigma)$
associated with intersections of $(\Upsilon)_{t \in \Sigma}$
with $\ell_1$.   

As a starting point for our construction, which follows particularly
\cite{BF98, F}, we introduce a complex Hilbert space, 
which we will denote $\mathbb{R}_J^{2n}$. The elements of this 
space will continue to be real-valued vectors of length $2n$, 
but we will define multiplication by complex scalars as 
\[
(\alpha + i\beta) u := \alpha u + \beta Ju, \quad u \in \mathbb{R}^{2n}, 
\alpha + i\beta \in \bbC,
\]
and we will define a complex scalar product 
\[
(u,v)_{\mathbb{R}^{2n}_J}
:= 
(u,v)_{\mathbb{R}^{2n}}-i\omega(u,v),\quad u,v\in\mathbb{R}^{2n}
\]
(recalling $\omega (u,v) = (Ju, v)_{\mathbb{R}^n}$).
It is important to note that, considered as a real vector space, $\mathbb{R}^{2n}_J$ 
is identical to $\mathbb{R}^{2n}$, and not its complexification 
$\mathbb{R}^{2n} \otimes_{\bbR} \bbC$. (In fact,  $\mathbb{R}^{2n}_J \cong \bbC^n$ 
while $\mathbb{R}^{2n} \otimes_{\bbR} \bbC \cong \bbC^{2n}$.) 
However, it is easy to see that $\mathbb{R}^{2n}_J \cong \ell \otimes_{\bbR} \bbC$ 
for any  Lagrangian subspace $\ell \in \Lambda(n)$, and we'll take advantage of 
this correspondence.

For a matrix $U$ acting on $\mathbb{R}^{2n}_J$, we denote the adjoint by $U^{J*}$ so 
that 
\begin{equation*}
(U u, v )_{\mathbb{R}^{2n}_J} =
 (u, U^{J*} v )_{\mathbb{R}^{2n}_J},
\end{equation*}
for all $u,v \in \mathbb{R}^{2n}_J$. We denote by $\mathfrak{U}_J$ the space of 
unitary matrices acting on $\mathbb{R}^{2n}_J$ (i.e., the matrices so that 
$U U^{J*} = U^{J*} U = I$). In order to clarify the nature of $\mathfrak{U}_J$, 
we note that we have the identity 
\begin{equation*}
(U u, U v )_{\mathbb{R}^{2n}_J}
= (u, v)_{\mathbb{R}^{2n}_J},
\end{equation*} 
from which 
\begin{equation*}
(U u, U v)_{\mathbb{R}^{2n}}
- i (J U u, U v)_{\mathbb{R}^{2n}}
=
(u, v)_{\mathbb{R}^{2n}}
- i (J u, v)_{\mathbb{R}^{2n}}.
\end{equation*}
Equating real parts, we see that $U$ must be unitary as a matrix on $\mathbb{R}^{2n}$, 
while by equating imaginary parts we see that $UJ = JU$. We have, then, 
\begin{equation*}
\mathfrak{U}_J=\{U\in\mathbb{R}^{2n \times 2n}\,|\,U^tU=UU^t=I_{2n},\, UJ=JU\}.
\end{equation*}

In addition, it will be useful to define a matrix $U^T$ satisfying 
$U^T z:=\overline{U^t\overline{z}}$, or $U^T=\tau_{1}\circ U^t\circ\tau_{1}$, 
where $\tau_{1}$ is the conjugate operation; that is, if $z=x+Jy$, $x,y\in\ell_1$, 
then $\tau_{1}(z)=\overline{z}:=x-Jy$. It is also clear that 
$\tau_{1}=2\Pi_{1}-I_{2n}$, where $\Pi_{1}$ 
is the orthogonal projection onto $\ell_1$.

Given our target space $\ell_1$, we denote by $\ell_1^{\perp}$ the Lagrangian 
subspace perpendicular to $\ell_1$ in $\mathbb{R}^{2n}$. I.e., $\ell_1^{\perp}$
is a Lagrangian subspace, and 
\begin{equation*}
(u, v)_{\mathbb{R}^{2n}} = 0,\quad 
\forall u \in \ell_1, v \in \ell_1^{\perp}. 
\end{equation*}
If $\mathbf{X}_{\ell_1}$ is a frame for $\ell_1$, then 
$J \mathbf{X}_{\ell_1}$ is a frame for $\ell_1^{\perp}$. We 
can express this as $\ell_1^{\perp} = J (\ell_1)$, indicating
that $\ell_1^{\perp}$ is the space obtained by mapping all 
elements of $\ell_1$ with $J$. 

For each $s\in\Sigma$ we choose a unitary operator $U_s$ 
acting on the complex Hilbert space $\mathbb{R}^{2n}_J$ such that 
$\Upsilon(s)=U_s(\ell_1^{\perp})$. This choice is possible by 
\cite[Proposition 1.1]{BF98}. Indeed, in the current setting,
we can associate a canonical frame $\mathbf{X}_{\Upsilon (s)}$
with each $\Upsilon (s)$, as well as a frame $\mathbf{X}_{\ell_1^{\perp}}$,
and find a family of unitary matices satisfying 
$\mathbf{X}_{\Upsilon (s)} = U_s \mathbf{X}_{\ell_1^{\perp}}$.
(The matrices $U_s$ are not uniquely defined, and in fact we'll find 
that different choices of $U_s$ can be useful in different settings.)

This relationship provides a natural and productive connection between 
the elements $\ell$ of the Lagrangian Grassmannian and elements 
$U \in \mathfrak{U}_J$. However, the associated unitary matrices 
are not uniquely specified, and consequently the spectrum of $U$
contains redundant information. For example, in the simple case 
of $\mathbb{R}^2$ this redundant information corresponds with our
previous observation that each element $\ell \in \Lambda (1)$
corresponds with two points on $S^1$. We overcome this difficulty
by defining a new (uniquely specified) unitary matrix 
$W_s$ in $\mathbb{R}^{2n}_J$ by $W_s=U_s U_s^T$. 

We observe that the unitary condition $UJ = JU$ implies $U$
must have the form 
\begin{equation*}
U = 
\begin{pmatrix}
U_{11} & - U_{21} \\
U_{21} & U_{11}
\end{pmatrix}
=
\begin{pmatrix}
U_{11} & 0 \\
0 & U_{11}
\end{pmatrix}
+ J
\begin{pmatrix}
U_{21} & 0 \\
0 & U_{21}
\end{pmatrix}.
\end{equation*}
In addition, we have the scaling condition 
\begin{equation} \label{unitary_scaling}
\begin{aligned}
U_{11}^t U_{11} + U_{21}^t U_{21} &= I \\
U_{11} U_{11}^t + U_{21} U_{21}^t &= I \\
U_{11}^t U_{21} - U_{21}^t U_{11} &= 0 \\
U_{11} U_{21}^t - U_{21} U_{11}^t &= 0.  
\end{aligned}
\end{equation}
In this way, there is a natural one-to-one correspondence between 
matrices $U \in \mathfrak{U}_J$ and the $n \times n$ complex
unitary matrices $\tilde{U} = U_{11} + i U_{21}$ (i.e., the 
$\tilde{U} \in \mathbb{C}^{n \times n}$ so that 
$\tilde{U}^* \tilde{U} = \tilde{U} \tilde{U}^* = I$). 
 
In this way, the matrix $W_s = U_s U_s^T$ has a natural corresponding
matrix $\tilde{W}_s = \tilde{U}_s \tilde{U}_s^T$, where 
$\tilde{U}^T z = \overline{\tilde{U}^* \bar{z}} = \tilde{U} z$.
Ultimately, we will define the Maslov index in terms of 
$\tilde{W}_s$.

The following properties of the matrices $W_s$ and $\tilde{W}_s$ can 
be found in \cite[Lemma 1.3]{BF98} or \cite[Proposition 2.44]{F}.

\begin{lemma}\label{propWs} 
If $\ell_1$ is a real Lagrangian subspace in $\mathbb{R}^{2n}$, 
$\Upsilon\colon\Sigma=[a,b]\to \Lambda (n)$ is a continuous path, 
$\Pi_s$ and $\Pi_{\ell_1}$ are the orthogonal projections onto $\Upsilon(s)$ 
and $\ell_1$ respectively, and $U_s$ is the unitary operator on $\mathbb{R}^{2n}_J$ 
such that $\Upsilon(s)=U_s(\ell_1^\perp)$, then 
\begin{itemize}\item[(i)] $W_s=(I_{\mathbb{R}^{2n}_J}-2\Pi_s)(2\Pi_{\ell_1}-I_{\mathbb{R}^{2n}_J})$;
\item[(ii)] $\ker(W_s+I_{\mathbb{R}^{2n}_J})$ is isomorphic to 
$(\Upsilon(s)\cap\ell_1)\oplus J(\Upsilon(s)\cap\ell_1)\cong(\Upsilon(s)\cap\ell_1)\otimes_{\bbR}\bbC$;
\item[(iii)] $\dim_\bbR(\Upsilon(s)\cap\ell_1)=\dim \ker(\tilde{W}_s+I)$.
\end{itemize}
\end{lemma}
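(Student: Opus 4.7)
The plan is to prove the three parts in sequence, since (iii) uses (ii), and (ii) uses the explicit formula from (i). The overarching idea is to translate everything into statements about orthogonal reflections on $\mathbb{R}^{2n}$, and then identify the relevant kernels geometrically using the Lagrangian structure.

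For (i), the key identity is that $U_s$ intertwines orthogonal reflections. Since $U_s \in \mathfrak{U}_J$, the $J$-adjoint $U_s^{J*}$ coincides with the ordinary transpose $U_s^t$, so in particular $U_s U_s^t = I$ and $U_s$ is a real orthogonal matrix. Consequently $U_s(\ell_1) = U_s(\ell_1^\perp)^\perp = \Upsilon(s)^\perp$, and conjugating the reflection across $\ell_1$ gives $U_s \tau_1 U_s^{-1} = 2\Pi_{\Upsilon(s)^\perp} - I = I - 2\Pi_s$. Substituting $U_s \tau_1 = (I - 2\Pi_s) U_s$ into the definition $W_s = U_s U_s^T = U_s \tau_1 U_s^t \tau_1$ and using $U_s U_s^t = I$ produces exactly $W_s = (I - 2\Pi_s)(2\Pi_{\ell_1} - I)$.

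For (ii), I would solve $W_s u = -u$ directly. Writing $\sigma_s := I - 2\Pi_s$, applying $\sigma_s$ to both sides of $\sigma_s \tau_1 u = -u$ and using $\sigma_s^2 = I$ collapses the equation to $\tau_1 u + \sigma_s u = 0$, equivalently $\Pi_{\ell_1} u = \Pi_s u$. Denoting this common value by $w$, one has $w \in \Upsilon(s) \cap \ell_1$ and $u - w \in \ell_1^\perp \cap \Upsilon(s)^\perp$. The essential geometric fact is that for any Lagrangian $\ell \subset \mathbb{R}^{2n}$ one has $\ell^\perp = J\ell$ (since $J\ell \perp \ell$ by definition and both have dimension $n$), so
\[
\ell_1^\perp \cap \Upsilon(s)^\perp = J\ell_1 \cap J\Upsilon(s) = J(\ell_1 \cap \Upsilon(s)).
\]
Hence every $u \in \ker(W_s + I)$ decomposes uniquely as $u = w + Jw'$ with $w, w' \in \Upsilon(s) \cap \ell_1$, and the converse is a direct check (using $Jw' \in \ell_1^\perp \cap \Upsilon(s)^\perp$ to kill the projections). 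This gives the desired real isomorphism with $(\Upsilon(s) \cap \ell_1) \oplus J(\Upsilon(s) \cap \ell_1)$, and the map $w + Jw' \mapsto w + iw'$ identifies this with the complexification $(\Upsilon(s) \cap \ell_1) \otimes_{\bbR} \bbC$.

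For (iii), I would invoke the one-to-one correspondence between $\mathfrak{U}_J$ and the complex unitary group $U(n)$ described in the excerpt, which is induced by the complex-linear isomorphism $\mathbb{R}^{2n}_J \cong \bbC^n$ given by $x + Jy \mapsto x + iy$ relative to a fixed Lagrangian splitting. Under this isomorphism $W_s$ corresponds to $\tilde{W}_s$, so $\dim_{\bbC} \ker(W_s + I_{\mathbb{R}^{2n}_J}) = \dim_{\bbC} \ker(\tilde{W}_s + I)$. Combined with (ii), which yields $\dim_{\bbC} \ker(W_s + I_{\mathbb{R}^{2n}_J}) = \dim_{\bbR}(\Upsilon(s) \cap \ell_1)$, this closes the argument. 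The main obstacle I anticipate is (ii), where the real versus complex structure must be handled carefully: the reduction to $\Pi_{\ell_1} u = \Pi_s u$ is clean, but verifying that the decomposition $u = w + Jw'$ is both forced and sufficient hinges on applying $\ell^\perp = J\ell$ to both Lagrangian factors, which is the only place the Lagrangian hypothesis on $\Upsilon(s)$ is genuinely used.
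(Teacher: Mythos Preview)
Your proof is correct. The paper does not actually prove this lemma; it simply cites \cite[Lemma 1.3]{BF98} and \cite[Proposition 2.44]{F}. Your argument---conjugating reflections for (i), reducing $W_s u = -u$ to $\Pi_{\ell_1} u = \Pi_s u$ and then invoking $\ell^\perp = J\ell$ for (ii), and passing through the complex-linear isomorphism $\mathbb{R}^{2n}_J \cong \mathbb{C}^n$ for (iii)---is complete and self-contained, so in that sense you have supplied more than the paper does.
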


Following \cite{BF98, F, P96}, we define the Maslov index of $\{\Upsilon(s)\}_{s\in\Sigma}$, with 
target $\ell_1$,  
as the spectral flow of the operator family $\{\tilde{W}_s\}_{s\in\Sigma}$ through $-1$;
that is, as the net count (including multiplicity) of the eigenvalues of $\tilde{W}_s$ crossing the point $-1$ counterclockwise 
on the unit circle minus the number of eigenvalues crossing $-1$ clockwise as the parameter $s$ changes. 
Specifically, let us choose a partition $a=s_0<s_1<\dots<s_n=b$ of $\Sigma=[a,b]$ and numbers 
$\epsilon_j\in(0,\pi)$ so that $\ker\big(\tilde{W}_s-e^{i(\pi\pm\epsilon_j)}I\big)=\{0\}$, 
that is, $e^{i(\pi\pm\epsilon_j)}\in\bbC\setminus \sigma(\tilde{W}_s)$, for $s_{j-1}<s<s_j$ and $j=1,\dots,n$. 
For each $j=1,\dots,n$ and any $s\in[s_{j-1},s_j]$ there are only finitely many values $\theta\in[0,\epsilon_j]$ 
for which $e^{i(\pi+\theta)}\in \sigma(\tilde{W}_s)$.

Fix some $j \in \{1, 2, \dots, n\}$ and consider the value
\begin{equation} \label{kdefined}
k (s,\epsilon_j) := 
\sum_{0 \leq \theta < \epsilon_j}
\dim \ker \big(\tilde{W}_s-e^{i(\pi+\theta)}I \big).
\end{equation} 
for $s_{j-1}\leq s\leq s_j$. This is precisely the sum, along with geometric multiplicity,
of the number of eigenvalues of $\tilde{W}_s$ that lie on the arc 
\begin{equation*}
A_j := \{e^{is}: s \in [\pi, \pi+\epsilon_j)\}.
\end{equation*}
The stipulation that 
$e^{i(\pi\pm\epsilon_j)}\in\bbC\setminus \sigma(\tilde{W}_s)$, for $s_{j-1}<s<s_j$
asserts that no eigenvalue can enter $A_j$ in the clockwise direction 
or exit in the counterclockwise direction during the interval $s_{j-1}<s<s_j$.
In this way, we see that $k(s_j, \epsilon_j) - k (s_{j-1}, \epsilon_j)$ is a count of the number of 
eigenvalues that entered $A_j$ in the counterclockwise direction minus the number that left
in the clockwise direction during the interval $(s_{j-1}, s_j)$.

In dealing with the concatenation of paths, it's particularly important to 
understand this quantity if an eigenvalue resides at $-1$ at either $s = s_{j-1}$
or $s = s_j$. If an eigenvalue moving in the counterclockwise direction 
arrives at $-1$ at $s = s_j$, then we increment the difference foward. On
the other hand, suppose an eigenvalue resides at -1 at $s = s_{j-1}$ and moves
in the counterclockwise direction. There is no change, and so we do not increment
the difference.    

We are ready to define the Maslov index.

\begin{definition}\label{dfnDef3.6}  
Let $\ell_1$ be a fixed Lagrangian subspace in a real Hilbert space $\mathbb{R}^{2n}$ and let 
$\Upsilon\colon\Sigma=[a,b]\to \Lambda(n)$ be a continuous path in the Lagrangian--Grassmannian. 
The Maslov index $\mas(\Upsilon,\ell_1;\Sigma)$ is defined by
\begin{equation}
\mas(\Upsilon,\ell_1;\Sigma)=\sum_{j=1}^n(k(s_j,\epsilon_j)-k(s_{j-1},\epsilon_j)).
\end{equation}
\end{definition}

We refer to \cite[Theorem 3.6]{F} for a list of basic properties of the Maslov index; 
in particular, as mentioned in our introduction, the Maslov index is a homotopy invariant 
and is additive under catenation of paths. 

It will be useful to anticipate some later developments and briefly discuss how 
the Maslov index applies to the contour $\Gamma$ described in Figure \ref{F1}. 
For this, we'll find it notationally convenient to use the notation 
$\ell (s,\lambda) = \ell (x, \lambda)|_{x=s}$ (effectively, distinguishing
between the independent variable $x$ and the variable endpoint $s$). For 
$(s,\lambda) \in \Gamma$, let $\tilde{W}_{s,\lambda}$ denote the unitary 
complex matrix associated with $\ell (s,\lambda)$ and target $\ell_1$.
For this discussion, we will use the important fact, 
verified below, that we have monotonicity in $\lambda$ in the following 
sense: {\it as $\lambda$ increases (with $s$ fixed), the eigenvalues of 
$\tilde{W}_{s, \lambda}$ move clockwise around $S^1$}. 

Focusing first on $\Gamma_1$ (for which $s = s_0$): as our contour proceeds in the counterclockwise
direction the eigenvalues of $\tilde{W}_{s_0, \lambda}$ move clockwise around $S^1$.
In this way, crossings necessarily correspond with eigenvalues of $\tilde{W}_{s_0, \lambda}$
rotating out of some $A_j$, thus reducing the Maslov index. Each of these crossings
corresponds with a solution to the eigenvalue problem 
\begin{equation} \label{ev_equation_s}
\begin{aligned}
H_s y := - y'' + V(x) y &= \lambda y \\
\alpha_1 y(0) + \alpha_2 y'(0) &= 0 \\
\beta_1 y(s) + \beta_2 y'(s) &= 0 
\end{aligned}
\end{equation} 
(with $s = s_0$.) It's convenient to set 
$\xi = x/s$ and $u(\xi) = y(x)$ so that $u$ solves the 
eigenvalue problem 
\begin{equation} \label{ev_equation_u}
\begin{aligned}
H(s) u := - u'' + s^2 V(s \xi) u &= s^2 \lambda u \\
\alpha_1 u(0) + \frac{1}{s} \alpha_2 u'(0) &= 0 \\
\beta_1 u(1) + \frac{1}{s} \beta_2 u'(1) &= 0. 
\end{aligned}
\end{equation} 
It's clear that crossings along $\Gamma_1$ correspond with the 
existence of eigenvalues of the operator $H(s_0)$. More precisely,
a crossing will occur along $\Gamma_1$ at $\lambda$, provided
${s_0}^2 \lambda$ is an eigenvalue of $H(s_0)$. The number of negative
eigenvalues of $H(s_0)$, including multiplicity, is its Morse index,
and since each such eigenvalue decreases the Maslov index by 
its multiplicity we obtain the relation 
\begin{equation*}
\mas (\ell, \ell_1; \Gamma_1) = - \mor (H(s_0)).
\end{equation*}   

\begin{remark} \label{HversusH}
We note for future reference that $H_s$ and $H(s)$ refer to different
operators with different domains. To be precise, 
\begin{equation*}
\begin{aligned}
\dom(H_s) &= \Big\{y \in H^2 (0,s): \alpha_1 y(0) + \alpha_2 y'(0) = 0, 
\beta_1 y(s) + \beta_2 y'(s) = 0 \Big\} \\
\dom(H (s)) &= \Big\{
u \in H^2 (0,1): \alpha_1 u(0)+\frac{1}{s}\alpha_2u'(0)=0; \, \beta_1 u(1)+\frac{1}{s}\beta_2u'(1)=0\Big\}
\end{aligned}
\end{equation*}
Of particular importance, $\lambda (s)$
is an eigenvalue of $H(s)$ if and only if $\lambda_s = \lambda (s)/s^2$
is an eigenvalue of $H_s$. 
\end{remark}

Suppose we have an intersection at the corner point $(s_0,0)$,
where $\Gamma_1$ meets $\Gamma_2$. Since the eigenvalues of 
$\tilde{W}_{s_0, \lambda}$ are moving clockwise around $S^1$,
this must correspond with an eigenvalue of $\tilde{W}_{s_0, \lambda}$
stopping at $-1$ from the clockwise direction. This eigenvalue 
does not leave $A_j$, and so the Maslov index does not increment.

On the other hand, let's consider what happens on $\Gamma_3$.
In this case, $\lambda$ will be decreasing (for counterclockwise
movement along $\Gamma$), so eigenvalues of 
$\tilde{W}_{s_0, \lambda}$ will move in the counterclockwise 
direction along $S^1$.
Accordingly, crossings will correspond with eigenvalues moving into 
some $A_j$, and so the Maslov index will increase. These crossings 
correspond with eigenvalues of $H$ (i.e., $H(1)$), and so 
\begin{equation*}
\mas (\ell, \ell_1; \Gamma_3) = \mor (H).
\end{equation*}   

Suppose we have an intersection at the corner point $(1,0)$. By 
monotonicity in $\lambda$, as $\lambda$ decreases from $0$ the
the eigenvalues of $\tilde{W}_{s_0, \lambda}$ will move in the
counterclockwise direction {\it into} some $A_j$. Since these 
eigenvalues are already in $A_j$ at the start of the time interval,
the Maslov index does not change. 

Finally, let's consider the contour $\Gamma_2$. Aside from the 
Dirichlet case, we don't necessarily have monotonicity (with 
respect to $s$) along $\Gamma_2$, but we can still say something
about the Maslov index based on eigenvalue curves $E_{s_*, \lambda_*}$,
which we'll define as continuous paths in the $s$-$\lambda$ plane
crossing through $(s_*, \lambda_*)$ and along which $\lambda$ is an 
eigenvalue of $H(s)$. Suppose such a 
curve crosses $\Gamma_2$ at some point $(s_*, 0)$. If it bends 
upward, we can consider a small box local to the intersection, so 
that the path exits this box through its top shelf. As with our 
discussion of $\Gamma_3$ this will correspond with an increase
in the Maslov index, and so by homotopy invariance the crossing 
at $(s_*,0)$ will correspond with a decrease in the Maslov index. 
Likewise, if the path crossing $(s_*, 0)$ bends downward the crossing
will correspond with an increase in the Maslov index.

\section{Application to the Schr\"odinger Equation} \label{schrodinger_section}

We now focus on the eigenvalue problem (\ref{eq:hill}), and especially
the first-order form (\ref{first_order}). Throughout our 
analysis, we will make use of the following remark concerning the 
matrices used in defining our boundary conditions. 

\begin{remark}
Note that \eqref{4.4}, \eqref{4.4new} imply that 
\begin{align}
\begin{bmatrix} \beta_1 & -\beta_2\\ 
\beta_2 & \beta_1\end{bmatrix}
\begin{bmatrix} \beta^t_1 & \beta^t_2\\ 
-\beta^t_2 & \beta^t_1\end{bmatrix}=I_{2n},
\end{align}
which, in turn, implies that 
\begin{align}
\begin{bmatrix} \beta^t_1 & \beta^t_2\\ 
-\beta^t_2 & \beta^t_1\end{bmatrix}
\begin{bmatrix} \beta_1 & -\beta_2\\ 
\beta_2 & \beta_1\end{bmatrix}=I_{2n}.
\end{align}
Or,
\begin{align}
 \beta_1^t\beta_2-\beta^t_2\beta_1&=0_{n\times n}\,\\
 \beta^t_1\beta_1+\beta^t_2\beta_2&=I.
\end{align}
Similar equalities hold for matrices $\alpha_1, \alpha_2$.
\end{remark}

Following \cite{DJ11}, for each $\lambda\in\R$ and $s\in(0,1]$ we define the following set of vector valued functions on $[0,s]$:
\begin{equation} \label{defY}
Y_{\lambda} =\Big{\{} \mathbf{p} \in H^1 (0,s): \mathbf{p} \text{ solves (\ref{first_order}) and } \alpha_1 p(0)+\alpha_2 q(0) = 0 \Big{\}}.   
\end{equation}
That is, we consider the ($n$ dimensional) solution space to the equation \eqref{first_order}, defined on $[0,s]$, 
consisting of the solutions that satisfy the boundary condition at $0$. 

We define the trace map $\Phi^{\lambda}_s: Y_{\lambda} \to \R^{2n}$ by the following formula:
\begin{equation}\label{eq:trace}
 \Phi^{\lambda}_s:  \mathbf{p} \mapsto \mathbf{p} (s).
\end{equation}
I.e., for the path of Lagrangian spaces $\ell (s,\lambda)$, we have $\ell (s,\lambda) = \Phi^{\lambda}_s (Y_{\lambda})$.

In what follows, we will use the observation that if 
$\mathbf{X} = {X \choose Z}$ is the frame for a Lagrangian subspace, then 
\begin{equation*}
X^t Z - Z^t X = 0.
\end{equation*}
To see this, we observe that 
since $\mathbf{X}$ is the frame of a Lagrangian subspace, each
of its columns ${x \choose z} \in \mathbb{R}^{2n}$ must satisfy 
\begin{equation*}
(J {x \choose y}, {x \choose y})_{\mathbb{R}^{2n}} = 0, \quad 
\Rightarrow ({-y \choose x}, {x \choose y})_{\mathbb{R}^{2n}} = 0,
\end{equation*}
from which the identity $X^t Z - Z^t X = 0$ is apparent.

\begin{theorem}\label{th:lagrange}
For all  $s \in (0,1]$ and $\lambda \in \R$ the plane $\Phi^{\lambda}_s(Y_{\lambda})$ belongs to the space $\Lambda(n)$ of 
Lagrangian $n$-planes in $\R^{2n}$, with the Lagrangian structure $\omega(v_1,v_2)=(Jv_1, v_2)_{\R^{2n}}$.
\end{theorem}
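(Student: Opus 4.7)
The plan is to check separately the two defining properties of a Lagrangian subspace from Definition \ref{lagrangian_subspace}: (i) that $\Phi_s^\lambda(Y_\lambda)$ has dimension $n$, and (ii) that the symplectic form $\omega$ vanishes identically on $\Phi_s^\lambda(Y_\lambda)$.

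For (i), I would first observe that $Y_\lambda$ itself has dimension $n$: the full solution space of the linear system (\ref{first_order}) on $[0,s]$ is $2n$-dimensional, and the map $\mathbf{p} \mapsto \alpha_1 p(0) + \alpha_2 q(0)$ is a linear surjection onto $\mathbb{R}^n$ by the rank condition (\ref{RN}), so its kernel $Y_\lambda$ has dimension $2n - n = n$. The trace map $\Phi_s^\lambda$ is injective by uniqueness for the linear initial value problem: if $\mathbf{p}(s) = 0$ then $\mathbf{p} \equiv 0$. Hence $\dim \Phi_s^\lambda(Y_\lambda) = n$.

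For (ii), I would use the conservation-law approach standard in this context. Take the basis $\{\mathbf{p}_j(x)\}_{j=1}^n$ of $Y_\lambda$ described in the introduction, with initial values $X(0) = \alpha_2^t$ and $Z(0) = -\alpha_1^t$; condition (\ref{4.4new}) ensures these columns are linearly independent, and condition (\ref{4.4}) ensures they satisfy the left boundary condition. Form the frame $\mathbf{X}(x)$ with $X(x)$ as its top block and $Z(x)$ as its bottom block, and define
\begin{equation*}
W(x) := X(x)^t Z(x) - Z(x)^t X(x).
\end{equation*}
By the observation recorded immediately before the theorem, $\Phi_s^\lambda(Y_\lambda)$ is Lagrangian provided $W(s) = 0$: indeed, any pair of vectors in the image can be written as $v_i = \mathbf{X}(s) c_i$, and a short calculation gives $\omega(v_1, v_2) = c_1^t W(s) c_2$.

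I would then establish $W(s) = 0$ in two steps. First, differentiating $W$ and using $X' = Z$ and $Z' = (V - \lambda I) X$ from (\ref{first_order}) together with the symmetry $V^t = V$, the four resulting terms cancel in pairs to give $W'(x) \equiv 0$. Second, at $x = 0$, a direct computation using $X(0) = \alpha_2^t$, $Z(0) = -\alpha_1^t$ yields
\begin{equation*}
W(0) = \alpha_1 \alpha_2^t - \alpha_2 \alpha_1^t = 0
\end{equation*}
by condition (\ref{4.4}). Combining these gives $W(s) = W(0) = 0$ for all $s$, completing the proof. No real obstacle arises; the argument rests on a transparent interplay between the symmetry of $V$ (which makes $W$ a conserved quantity of the flow) and the algebraic identity (\ref{4.4}) on $\alpha_1, \alpha_2$ (which is precisely the statement that $\ker [\alpha_1 \; \alpha_2]$ is Lagrangian, providing the required initial Lagrangian subspace for the evolution).
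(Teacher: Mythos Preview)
Your proposal is correct and follows essentially the same approach as the paper: both arguments show that $W(x)=X^tZ-Z^tX$ is conserved along the flow (using the symmetry of $V$) and vanishes at $x=0$ by condition (\ref{4.4}), then invoke the $n$-dimensionality of $Y_\lambda$. Your dimension argument is slightly more explicit than the paper's---you spell out the surjectivity from (\ref{RN}) and the injectivity of $\Phi_s^\lambda$ via ODE uniqueness, whereas the paper simply asserts that $Y_\lambda$ and hence its image are $n$-dimensional---but this is a minor elaboration rather than a different route.
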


\begin{proof}
Our target space $\ell_1$ can be represented by a $2n\times n$ matrix ${-\beta^t_2 \choose \beta^t_1}$. 
Since $-\beta_2\beta_1^t=-\beta_1\beta^t_2$ by \eqref{4.4}, the symplectic form $\omega$ vanishes on $\ell_1$. 
Also, $\ell_1$ is $n$-dimensional \eqref{RN}. Hence, $\ell_1$ is Lagrangian.  

Next, we represent $\Phi_s^\lambda(Y_{\lambda})$ as a $2n\times n$ matrix 
${X(s,\lambda) \choose Z(s,\lambda)}$. 
Then 
\begin{align}
(X^tZ-Z^tX)'&=(X^t)'Z+X^tZ'-(Z^t)'X-Z^tX' \\
& =Z^tZ+X^t(V-\lambda I)X-X^t(V-\lambda I)X-Z^tZ=0. 
\end{align}
But since 
\begin{equation*}
X^t(0,\lambda)Z(0,\lambda)-Z^t(0,\lambda)X(0,\lambda)=-\alpha_2\alpha_1^t+\alpha_1\alpha^t_2=0,
\end{equation*} 
we see that $X^tZ-Z^tX=0$. 
Therefore,  the symplectic form $\omega$ vanishes on $\Phi_s^\lambda(Y_{\lambda})$. 
And since $Y_{\lambda}$ is $n$-dimensional, $\Phi_s^\lambda(Y_{\lambda})$ is also $n$-dimensional, 
and, therefore, it is Lagrangian.
\end{proof}

At this point, we would like to relate the crossings of the path $\big\{\Phi_\lambda^s(Y_{\lambda})\big\}$ 
to eigenvalues of differential operators $H_s$ introduced in (\ref{ev_equation_u}).
We remark that $y \in \ker\big(H_{s}-\lambda I\big)$ if and only if the vector valued function 
$\mathbf{p}$ is a solution of \eqref{first_order} on $[0,s]$ that satisfies the 
boundary conditions $\alpha_1 p(0) + \alpha_2 q(0) = 0$ and 
$\beta_1 p(s) + \beta_2 q(s) = 0$.
In addition, let $H_s^D$ denote the operator $H_s$ 
with Dirichlet boundary conditions (i.e., 
$\begin{bmatrix} \beta_1 & \beta_2\end{bmatrix}=\begin{bmatrix} I_n & 0\end{bmatrix}$).

As discussed in Section \ref{maslov_section}, we proceed by associating 
each Lagrangian subspace 
$\ell (s, \lambda)$ with a matrix $U_{s, \lambda} \in \mathfrak{U}_J$. In 
particular, $U_{s, \lambda}$ should map $\ell_1^{\perp}$ to $\ell (s, \lambda)$.
In terms of frames, this asserts that 
\begin{equation*}
\mathbf{X} (s, \lambda) = U(s, \lambda) 
\begin{bmatrix}
\beta_1^t \\
\beta_2^t
\end{bmatrix},
\end{equation*}
where we will need to scale $\mathbf{X}$ to ensure that 
$U_{s, \lambda}$ is unitary (see below). According to our
condition $UJ = JU$, we know that $U$ must have the form 
\begin{equation*}
U = 
\begin{pmatrix}
U_{11} &  - U_{21} \\
U_{21} & U_{11}
\end{pmatrix},
\end{equation*}
allowing us to express the relationship for $U$ as 
\begin{equation*}
\begin{bmatrix}
X^t \\
Z^t
\end{bmatrix}
=
\begin{pmatrix}
\beta_1 & -\beta_2 \\
\beta_2 & \beta_1
\end{pmatrix}
\begin{bmatrix}
U_{11}^t \\
U_{21}^t
\end{bmatrix}.
\end{equation*}
In order to ensure the unitary normalization 
$U_{11}^t U_{11} + U_{21}^t U_{21} = I$, we note
that we can choose the frame $\mathbf{X}$
to be $XM \choose ZM$ for any $n \times n$ 
invertible matrix $M$. With this choice, we find 
that $U$ has the form 
\begin{equation*}
U = 
\begin{pmatrix}
XM & -ZM \\
ZM & XM
\end{pmatrix}
\mathcal{B},
\end{equation*}
where 
\begin{equation*}
\mathcal{B} := 
\begin{bmatrix} \beta_1 & \beta_2\\ 
-\beta_2 & \beta_1\end{bmatrix},
\end{equation*}
and we must have 
\begin{equation*}
\begin{aligned}
M^t X^t X M + M^t Z^t Z M &= I \\
M^t X^t Z M - M^t Z^t X M &= 0.
\end{aligned}
\end{equation*}
We will check below that the choices 
$M = (X^t X + Z^t Z)^{-1/2}$ and
$M = X^{-1} (I+M_D^2)^{-1/2}$, where 
$M_D = ZX^{-1}$ can both be effective. 
(As discussed in \cite{LS1}
$M_D$ is the Weyl-Titchmarsh function associated with $H_s^D$.) 

For the following calculations we will find it convenient to define two matrices
\begin{equation*}
\begin{aligned}
\mathbb{M} (s, \lambda) &:= I + M_D^2 (s, \lambda), \\
\mathbb{X} (s, \lambda) &:= X^t (s, \lambda) X (s, \lambda) + Z^t (s, \lambda) Z (s, \lambda).
\end{aligned}
\end{equation*}

\begin{lemma} \label{m_lemma} 
The matrix $M_D = Z X^{-1}$ is symmetric whenever $X$ is invertible. Moreover, 
we have the relations
\begin{equation*}
\begin{aligned}
X \mathbb{X}^{-1} X^t + Z \mathbb{X}^{-1} Z^t &= I_n, \\
Z \mathbb{X}^{-1} X^t - X \mathbb{X}^{-1} Z^t &= 0_n,
\end{aligned}
\end{equation*}
as well as the commutation 
\begin{equation*}
\mathbb{M}^{-1/2} M_D = M_D \mathbb{M}^{-1/2}.
\end{equation*} 
\end{lemma}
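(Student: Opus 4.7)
The plan is to handle the four claims in sequence, using as the common ingredient the Lagrangian relation $X^t Z - Z^t X = 0$ that was established just before the lemma and inside the proof of Theorem \ref{th:lagrange}. For symmetry of $M_D$, I would compute $M_D^t = (X^t)^{-1} Z^t$ and note that the desired identity $ZX^{-1} = (X^t)^{-1} Z^t$ is equivalent, after multiplying on the left by $X^t$ and on the right by $X$, to $X^t Z = Z^t X$; so symmetry of $M_D$ is nothing but the Lagrangian condition rewritten.

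For the two matrix identities involving $\mathbb{X}^{-1}$, the cleanest route is to assemble the $2n \times 2n$ block matrix
\begin{equation*}
U = \begin{pmatrix} X M & -Z M \\ Z M & X M \end{pmatrix}, \qquad M := \mathbb{X}^{-1/2}.
\end{equation*}
The matrix $\mathbb{X}$ is symmetric and positive definite (any vector in its kernel would lie in the kernel of the full-rank frame $\binom{X}{Z}$), so $M$ is well defined and symmetric. A direct block computation of $U^t U$ produces diagonal entries $M \mathbb{X} M = I_n$ and off-diagonal entries $M(Z^t X - X^t Z) M = 0$, so $U^t U = I_{2n}$. Because $U$ is a finite square matrix this forces $U U^t = I_{2n}$; reading off the top row of that identity reproduces the two claimed relations
\begin{equation*}
X \mathbb{X}^{-1} X^t + Z \mathbb{X}^{-1} Z^t = I_n \qquad \text{and} \qquad X \mathbb{X}^{-1} Z^t - Z \mathbb{X}^{-1} X^t = 0_n.
\end{equation*}

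The commutation in the final assertion is then pure functional calculus. Once symmetry of $M_D$ is in hand, $\mathbb{M} = I + M_D^2$ is symmetric with spectrum contained in $[1,\infty)$, so $\mathbb{M}^{-1/2}$ is well defined; diagonalizing $M_D = V D V^t$ with $V$ orthogonal and $D$ diagonal gives $\mathbb{M}^{-1/2} = V (I+D^2)^{-1/2} V^t$, which manifestly commutes with $M_D$. The only step with any substance is the middle one, and even there the bookkeeping is light because the Lagrangian relation instantly annihilates the off-diagonal blocks of $U^t U$; accordingly, I do not anticipate any real obstacle in carrying the argument out, beyond being careful that $\mathbb{X}$ is genuinely invertible so that $\mathbb{X}^{-1/2}$ makes sense.
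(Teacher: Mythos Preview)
Your proof is correct. The arguments for the symmetry of $M_D$ and the commutation $\mathbb{M}^{-1/2} M_D = M_D \mathbb{M}^{-1/2}$ are essentially the same as the paper's (the paper phrases the commutation as a general fact about square roots of commuting positive matrices, you make it explicit via diagonalization).

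The middle step, however, takes a genuinely different route. The paper establishes the identities $X\mathbb{X}^{-1}X^t + Z\mathbb{X}^{-1}Z^t = I_n$ and $Z\mathbb{X}^{-1}X^t - X\mathbb{X}^{-1}Z^t = 0_n$ by first assuming \emph{both} $X$ and $Z$ invertible, rewriting everything in terms of $M_D = ZX^{-1}$, computing directly, and then invoking continuity in $(s,\lambda)$ to remove the invertibility hypothesis. Your block-matrix argument instead shows $U^t U = I_{2n}$ directly from the Lagrangian relation $X^t Z = Z^t X$ and the definition of $\mathbb{X}$, then deduces $UU^t = I_{2n}$ and reads off the identities. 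This is cleaner: it needs only the invertibility of $\mathbb{X}$ (which you correctly justify from the full rank of the frame), not of $X$ or $Z$ separately, and it avoids the continuity step entirely. In fact your matrix $U$ is exactly the matrix $\mathcal{X}_D$ that the paper introduces \emph{after} this lemma and whose orthogonality the paper proves by citing the very identities you are establishing; you have effectively reversed the logical order, proving $\mathcal{X}_D^t \mathcal{X}_D = I$ first and deducing the identities from $\mathcal{X}_D \mathcal{X}_D^t = I$.
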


\begin{proof} 
For symmetry, we observe that if $X$ is invertible, we can 
write 
\begin{equation*}
Z X^{-1} X = (X^t)^{-1} X^t Z X^{-1} X.
\end{equation*} 
Recalling the relation $X^t Z - Z^t X = 0$, and interchanging transpose 
with inverse, we find 
\begin{equation*}
(X^t)^{-1} X^t Z X^{-1} X = (X^{-1})^t Z^t X X^{-1} X 
= (X^{-1})^t Z^t X.
\end{equation*}
We see that $Z X^{-1} = (X^{-1})^t Z^t$; i.e., 
$M_D = M_D^t$.

For the last claim, we first assume $X(s,\lambda)$ 
and $Z(s,\lambda)$ are invertible. Then,
\begin{align}\lb{iden}
\begin{split}
X \mathbb{X}^{-1} X^t + Z \mathbb{X}^{-1} Z^t &= X(X^tX+Z^tZ)^{-1}X^t+Z(X^tX+Z^tZ)^{-1}Z^t \\
&= ((X^t)^{-1}(X^tX+Z^tZ)X^{-1})^{-1}+((Z^t)^{-1}(X^tX+Z^tZ)Z^{-1})^{-1} \\
&= (I+(X^t)^{-1}Z^tZX^{-1})^{-1}+((Z^t)^{-1}X^tXZ^{-1}+I)^{-1} \\
&= (I+M_D^2)^{-1}+(M_D^{-2}+I)^{-1} \\
&= (I+M_D^2)^{-1}+M^2_D(I+M^2_D)^{-1} = I_n.
 \end{split}
\end{align}
Since $X\mathbb{X}^{-1}X^t+Z\mathbb{X}^{-1}Z^t$ is continuous with respect to 
$(s,\lambda)$, \eqref{iden} holds for any $s$ and $\lambda$ (i.e., even for
pairs with $X(s,\lambda)$ not invertible).
Similarly, one can check that 
\begin{equation*}
Z\mathbb{X}^{-1}X^t-X\mathbb{X}^{-1}Z^t=0.
\end{equation*}

In order to see the commutation relation, we note that the relation 
\begin{equation*}
M_D \mathbb{M} = \mathbb{M} M_D
\end{equation*}
is trivial and leads immediately to 
\begin{equation*}
\mathbb{M}^{-1} M_D = M_D \mathbb{M}^{-1}.
\end{equation*}
The claim now follows from the general observation that if $A$ is 
positive definite and $AB = B A$ then $A^{1/2} B = B A^{1/2}$
and $B A^{-1/2} = A^{-1/2} B$.
\end{proof}

We will identify two choices of unitary matrix $U_{s,\lambda}$, which will be specified in 
terms of the matrices 
\begin{equation} \label{matrices_defined}
\begin{aligned}
\mathcal{M}_D &:= 
\begin{bmatrix} \mathbb{M}^{-1/2} & -\mathbb{M}^{-1/2}M_D\\ 
\mathbb{M}^{-1/2}M_D & \mathbb{M}^{-1/2}
\end{bmatrix} \\
\mathcal{X}_D &:=
\begin{bmatrix} X\mathbb{X}^{-1/2} & -Z\mathbb{X}^{-1/2}\\ 
Z\mathbb{X}^{-1/2} & X\mathbb{X}^{-1/2}\end{bmatrix}. \\
\end{aligned}
\end{equation}

\begin{lemma} \label{us_lemma}
Suppose $\mathbf{X} (s, \lambda) = {X (s,\lambda) \choose Z (s,\lambda)}$ is 
any frame for the Lagrangian subspace $\Phi_s^{\lambda} (Y_{\lambda})$. Then
\begin{equation*}
U_{s, \lambda} = \mathcal{M}_D \mathcal{B}
\end{equation*}
is unitary in $\mathbb{R}_J^{2n}$ and 
satisfies $\Phi_s^{\lambda} (Y_{\lambda}) = U_{s, \lambda} (\ell_1^{\perp})$
for all $\lambda \in \mathbb{R} \backslash \sigma (H_s^D)$, and 
\begin{equation*}
Q_{s, \lambda} := \mathcal{X}_D \mathcal{B}
\end{equation*} 
is unitary in $\mathbb{R}_J^{2n}$ and satisifies the same relation 
for all $\lambda \in \mathbb{R}$.
\end{lemma}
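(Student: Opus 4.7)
The plan is to verify both claims (unitarity in $\mathbb{R}_J^{2n}$ and the frame relation) by reducing each to a short computation that exploits the block symmetry of all matrices involved. The key organizing observation is that any $2n\times 2n$ real matrix of the form $\begin{pmatrix} A & -B \\ B & A\end{pmatrix}$ automatically commutes with $J$, so the condition for belonging to $\mathfrak{U}_J$ reduces via the discussion in Section \ref{maslov_section} to the scaling relations \eqref{unitary_scaling} on the pair $(A,B)$, namely $A^tA+B^tB=I$ and $A^tB-B^tA=0$. Each of $\mathcal{B}$, $\mathcal{M}_D$, and $\mathcal{X}_D$ visibly has this block shape, and the product of elements of $\mathfrak{U}_J$ is again in $\mathfrak{U}_J$, so it suffices to check each factor individually.

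First I would dispose of $\mathcal{B}$: unitarity follows immediately from the identities $\beta_1\beta_1^t+\beta_2\beta_2^t=I$ and $\beta_1\beta_2^t-\beta_2\beta_1^t=0$ collected in the Remark preceding the Lemma. Next I would verify that $\mathcal{X}_D$ is unitary (valid for all $\lambda\in\mathbb{R}$): positive-definiteness of $\mathbb{X}=X^tX+Z^tZ$ follows from the full column rank of the frame $\mathbf{X}$, the identity $\mathbb{X}^{-1/2}\mathbb{X}\,\mathbb{X}^{-1/2}=I$ gives the first scaling relation, and the Lagrangian identity $X^tZ-Z^tX=0$ (derived just above Theorem \ref{th:lagrange}) gives the second. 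For $\mathcal{M}_D$ (valid only for $\lambda\notin\sigma(H_s^D)$, where $X$ is invertible and $M_D=ZX^{-1}$ is defined) I would use the three pieces of Lemma \ref{m_lemma}: symmetry of $M_D$, the defining relation $\mathbb{M}=I+M_D^2$, and the commutation $\mathbb{M}^{-1/2}M_D=M_D\mathbb{M}^{-1/2}$. Then the first scaling relation becomes $\mathbb{M}^{-1}+M_D\mathbb{M}^{-1}M_D=\mathbb{M}^{-1}(I+M_D^2)=I$ and the second collapses by commutation.

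For the frame relation, the calculation is transparent. Applying $\mathcal{B}$ to the frame $\begin{pmatrix} \beta_1^t\\ \beta_2^t\end{pmatrix}$ of $\ell_1^\perp$ produces $\begin{pmatrix} I\\ 0\end{pmatrix}$ by the same Remark identities. For $Q_{s,\lambda}$, multiplying by $\mathcal{X}_D$ then gives $\begin{pmatrix} X\\ Z\end{pmatrix}\mathbb{X}^{-1/2}$, which is $\mathbf{X}(s,\lambda)$ right-multiplied by the invertible matrix $\mathbb{X}^{-1/2}$, hence spans $\Phi_s^\lambda(Y_\lambda)$. For $U_{s,\lambda}$, multiplying by $\mathcal{M}_D$ gives $\begin{pmatrix} \mathbb{M}^{-1/2}\\ \mathbb{M}^{-1/2}M_D\end{pmatrix}$, which by the commutation relation equals $\begin{pmatrix} I\\ M_D\end{pmatrix}\mathbb{M}^{-1/2}=\begin{pmatrix} X\\ Z\end{pmatrix}X^{-1}\mathbb{M}^{-1/2}$; again this is $\mathbf{X}$ post-multiplied by an invertible matrix, so the column span is $\Phi_s^\lambda(Y_\lambda)$.

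The only subtlety — and really the main bookkeeping obstacle — is the correct handling of invertibility: $\mathcal{M}_D$ is undefined precisely when $X$ fails to be invertible, i.e. at the Dirichlet spectrum $\sigma(H_s^D)$, which is the reason for the stated restriction on $\lambda$ in the $U_{s,\lambda}$ statement. By contrast $\mathcal{X}_D$ uses only the full column rank of $\mathbf{X}$, which is automatic, giving a genuinely global parametrization. The commutation identity from Lemma \ref{m_lemma} is the single nonformal ingredient and must be invoked twice — once in the unitarity check and once to rewrite the output frame in a form recognizable as a scalar multiple of $\mathbf{X}$.
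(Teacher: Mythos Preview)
Your proposal is correct and follows essentially the same approach as the paper. The only organizational difference is that you verify each factor $\mathcal{B}$, $\mathcal{M}_D$, $\mathcal{X}_D$ lies in $\mathfrak{U}_J$ separately and then invoke closure of $\mathfrak{U}_J$ under multiplication, whereas the paper computes the products $U_{s,\lambda}U_{s,\lambda}^t$ and $Q_{s,\lambda}Q_{s,\lambda}^t$ directly (first cancelling $\mathcal{B}\mathcal{B}^t=I$, then verifying $\mathcal{M}_D\mathcal{M}_D^t=I$ and $\mathcal{X}_D\mathcal{X}_D^t=I$); likewise the paper checks the frame relation on individual vectors $u=(\beta_1^t x,\beta_2^t x)^\top\in\ell_1^\perp$ rather than on the full frame, but the computations are identical in substance.
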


\begin{proof}
First, using \eqref{4.4} and \eqref{4.4new}, we see that 
\begin{equation*}
\mathcal{B} \mathcal{B}^t = 
\begin{bmatrix}
\beta_1 \beta_1^t + \beta_2 \beta_2^t & - \beta_1 \beta_2^t + \beta_2 \beta_1^t \\
- \beta_2 \beta_1^t + \beta_1 \beta_2^t & \beta_2 \beta_2^t + \beta_1 \beta_1^t
\end{bmatrix} 
= 
\begin{bmatrix}
I_n & 0_n \\
0_n & I_n
\end{bmatrix}. 
\end{equation*}

We can now readily check that $U_{s, \lambda}$ is unitary on $\mathbb{R}^{2n}$. We compute
\begin{align*}
U_{s,\lambda}U^t_{s,\lambda} &= \mathcal{M}_D \mathcal{B} \mathcal{B}^t \mathcal{M_D}^t
= \mathcal{M}_D \mathcal{M_D}^t \\
&=\begin{bmatrix} (I+M_D^2)^{-1}+(I+M_D^2)^{-1}M_D^2 & M_D(I+M_D^2)^{-1}-M_D(I+M_D^2)^{-1}\\ 
M_D(I+M_D^2)^{-1}-M_D(I+M_D^2)^{-1} &(I+M_D^2)^{-1}M_D^2+(I+M_D^2)^{-1}\end{bmatrix}=I_{2n}.
\end{align*}
Note that we used the fact that $M_D$ is symmetric. Similarly, $U_{s,\lambda}^tU_{s,\lambda}=I_{2n}$,
and it is also easy to check that $U_{s,\lambda}J=JU_{s,\lambda}$.

For $Q_{s, \lambda}$, we proceed as with $U_{s, \lambda}$ to find
\begin{align*}
Q_{s, \lambda} Q_{s, \lambda}^t &= \mathcal{X}_D \mathcal{X}_D^t \\
&= 
\begin{bmatrix}  
X \mathbb{X}^{-1} X^t + Z \mathbb{X}^{-1} Z^t & X \mathbb{X}^{-1} Z^t-Z \mathbb{X}^{-1} X^t\\ 
Z \mathbb{X}^{-1} X^t-X \mathbb{X}^{-1} Z^t & Z \mathbb{X}^{-1} Z^t+X \mathbb{X}^{-1}X^t
\end{bmatrix}
= I_{2n}.
\end{align*} 
Proceeding similarly, we can show that 
$Q^t_{s,\lambda} Q_{s,\lambda}=I_{2n}$. 

In order to check the relation 
$\Phi_s^{\lambda} (Y_{\lambda}) = U_{s, \lambda} (\ell_1^{\perp})$,
let $u\in \ell_1^{\perp}$, so that $u=(\beta_1^tx,\beta_2^tx)^\top$ for
some $x\in\mathbb{R}^{n}$. 
Therefore,
\begin{equation*} 
U_{s,\lambda}u=((I+M^2(s,\lambda))^{-1/2}x,M(s,\lambda)(I+M^2(s,\lambda))^{-1/2}x)^\top
=(Xy,Zy)^\top
\in\Phi_s^\lambda(Y_{\lambda}),
\end{equation*}
where $y=X^{-1}(I+M^2(s,\lambda))^{-1/2}x$. Similarly, 
\begin{equation*}
Q_{s,\lambda}u=(X(X^tX+Z^tZ)^{-1/2}x,Z(X^tX+Z^tZ)^{-1/2}x)^\top
=(Xy,Zy)^\top
\in\Phi_s^\lambda(Y_{\lambda}).
\end{equation*}
where $y=(X^tX+Z^tZ)^{-1/2}x$
\end{proof}

\begin{remark} \label{complex_u}
The matrices $\mathcal{M}_D$ and $\mathcal{X}_D$ are in $\mathfrak{U}_J$, and
as discussed in Section \ref{maslov_section}, can be associated with $n \times n$
complex-valued unitary matrices. To be precise, notice that we can express the matrix $\mathcal{M}_D$ as 
\begin{equation*}
\mathcal{M}_D = 
\begin{bmatrix}
\mathbb{M}^{-1/2} & 0 \\
0 & \mathbb{M}^{-1/2} 
\end{bmatrix}
+ 
J 
\begin{bmatrix}
\mathbb{M}^{-1/2} M_D & 0 \\
0 & \mathbb{M}^{-1/2} M_D 
\end{bmatrix},
\end{equation*}
which can be associated with the complex-valued $n \times n$ matrix
\begin{equation*}
\tilde{\mathcal{M}}_D = \mathbb{M}^{-1/2} + i \mathbb{M}^{-1/2} M_D
= \mathbb{M}^{-1/2}(I + i M_D).
\end{equation*}
Likewise, for $\mathcal{X}_D$ we can write 
\begin{equation*}
\mathcal{X}_D = 
\begin{bmatrix}
X \mathbb{X}^{-1/2} & 0 \\
0 & X \mathbb{X}^{-1/2} 
\end{bmatrix}
+ 
J 
\begin{bmatrix}
Z \mathbb{X}^{-1/2} & 0 \\
0 & Z \mathbb{X}^{-1/2} 
\end{bmatrix},
\end{equation*}
and we associate with this the complex-valued $n \times n$ matrix
\begin{equation*}
\tilde{\mathcal{X}}_D = X \mathbb{X}^{-1/2} + i Z \mathbb{X}^{-1/2}
= (X + i Z) \mathbb{X}^{-1/2}.
\end{equation*}
\end{remark}

We are now prepared to derive an expression for the matrix 
$W_{s, \lambda} = U_{s, \lambda} U_{s, \lambda}^T$ described
in our definition of the Maslov index. We note at the 
outset that we can write 
\begin{equation*}
W_{s,\lambda}=U_{s,\lambda} U^T_{s,\lambda}
= U_{s,\lambda} \tau_{1} U_{s,\lambda}^t \tau_{1}
= \mathcal{M}_D \mathcal{B} \tau_{1} \mathcal{B}^t \mathcal{M}_D^t \tau_{1}.
\end{equation*}

\begin{lemma} \label{ws_lemma}
Under the assumptions of Lemma \ref{us_lemma},
\begin{equation*}
W_{s, \lambda} = \mathcal{M}_D (s,\lambda)^2 \mathfrak{B},
\end{equation*}
where 
\begin{equation*}
\mathfrak{B} = 
\begin{bmatrix} 
\beta_1^t\beta_1-\beta_2^t\beta_2 & 2\beta_2^t\beta_1\\ 
-2\beta_2^t\beta_1 & \beta_1^t\beta_1-\beta_2^t\beta_2
\end{bmatrix}.
\end{equation*}
\end{lemma}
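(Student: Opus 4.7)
The plan is to chain three short identities that collapse the formula $W_{s,\lambda} = \mathcal{M}_D\mathcal{B}\tau_1\mathcal{B}^t\mathcal{M}_D^t\tau_1$ into the claimed product $\mathcal{M}_D^2\mathfrak{B}$. The only inputs beyond routine block multiplication are $\mathcal{B}\mathcal{B}^t = I_{2n}$ from the opening remark of this section, the symmetry of $M_D$ and its commutation with $\mathbb{M}^{-1/2}$ from Lemma \ref{m_lemma}, and the $\beta$-identities $\beta_1^t\beta_1 + \beta_2^t\beta_2 = I_n$ and $\beta_1^t\beta_2 = \beta_2^t\beta_1$ (again from the opening remark). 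The main obstacle is bookkeeping, particularly with signs in the $\mathcal{J}_1$-conjugations below.

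First I would use the fact that the last $n$ columns of the orthogonal matrix $\mathcal{B}^t$ coincide with the frame $\binom{-\beta_2^t}{\beta_1^t}$ for $\ell_1$ employed in the proof of Theorem \ref{th:lagrange}. Consequently, with $P_2 := \mathrm{diag}(0_n, I_n)$, the orthogonal projection onto $\ell_1$ factors as $\Pi_1 = \mathcal{B}^t P_2 \mathcal{B}$, whence
\[
\tau_1 = 2\Pi_1 - I_{2n} = \mathcal{B}^t \mathcal{J}_1 \mathcal{B}, \qquad \mathcal{J}_1 := \mathrm{diag}(-I_n, I_n).
\]
Conjugating by $\mathcal{B}$ and invoking $\mathcal{B}\mathcal{B}^t = I_{2n}$ produces the reduction $\mathcal{B}\tau_1\mathcal{B}^t = \mathcal{J}_1$, so $W_{s,\lambda}$ collapses to $W_{s,\lambda} = \mathcal{M}_D \mathcal{J}_1 \mathcal{M}_D^t \tau_1$. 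Next, Lemma \ref{m_lemma} guarantees that both diagonal blocks of $\mathcal{M}_D$ are equal symmetric matrices, both off-diagonal blocks are symmetric and opposite in sign, and all four blocks commute pairwise. A direct $2\times 2$ block computation then verifies the identity $\mathcal{J}_1\mathcal{M}_D\mathcal{J}_1 = \mathcal{M}_D^t$, which combined with $\mathcal{J}_1^2 = I_{2n}$ yields
\[
W_{s,\lambda} = \mathcal{M}_D \mathcal{J}_1 (\mathcal{J}_1 \mathcal{M}_D \mathcal{J}_1) \tau_1 = \mathcal{M}_D^2 (\mathcal{J}_1 \tau_1).
\]

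Finally, writing $\Pi_1 = \binom{-\beta_2^t}{\beta_1^t}\begin{bmatrix}-\beta_2 & \beta_1\end{bmatrix}$ explicitly and applying the two $\beta$-identities cited above, I would arrive at the closed form
\[
\tau_1 = \begin{bmatrix}\beta_2^t\beta_2-\beta_1^t\beta_1 & -2\beta_2^t\beta_1 \\ -2\beta_2^t\beta_1 & \beta_1^t\beta_1-\beta_2^t\beta_2\end{bmatrix}.
\]
Left-multiplying by $\mathcal{J}_1$ flips the sign of the top block-row, and the result is precisely the matrix $\mathfrak{B}$ appearing in the statement of the lemma. This completes the identification $W_{s,\lambda} = \mathcal{M}_D^2 \mathfrak{B}$, with the clean factorization $\tau_1 = \mathcal{B}^t\mathcal{J}_1\mathcal{B}$ being the only nonobvious step; everything else is routine.
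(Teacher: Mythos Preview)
Your proof is correct and follows essentially the same approach as the paper: both arguments establish $\mathcal{B}\tau_1\mathcal{B}^t = \mathcal{J}_1$, then use the relation between $\mathcal{J}_1$ and $\mathcal{M}_D^t$ to factor out $\mathcal{M}_D^2$, and finally identify $\mathcal{J}_1\tau_1 = \mathfrak{B}$. The only difference is organizational: you recognize the clean factorization $\tau_1 = \mathcal{B}^t\mathcal{J}_1\mathcal{B}$ at the outset (from $\Pi_1 = \mathcal{B}^tP_2\mathcal{B}$), which makes $\mathcal{B}\tau_1\mathcal{B}^t = \mathcal{J}_1$ immediate, whereas the paper arrives at the same identity by direct block multiplication after first computing $\tau_1$ explicitly.
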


\begin{proof}
First, we would like to find $\tau_{1}=2\Pi_{1}-I_{2n}$. It is clear that 
\begin{align}
 \Pi_{1}=\begin{bmatrix} \beta_2^t\beta_2 & -\beta_2^t\beta_1\\ 
-\beta_1^t\beta_2 & \beta_1^t\beta_1\end{bmatrix},
\end{align}
and we can check directly that $\Pi^2_{1}=\Pi_{1}$. 
Moreover, using \eqref{4.4} and \eqref{4.4new}, we obtain that
\begin{align*}
 \Pi^2_{1}&=
\begin{bmatrix} 
\beta_2^t\beta_2 & -\beta_2^t\beta_1\\ 
-\beta_1^t\beta_2 & \beta_1^t\beta_1
\end{bmatrix}
\begin{bmatrix} 
\beta_2^t\beta_2 & -\beta_2^t\beta_1\\ 
-\beta_1^t\beta_2 & \beta_1^t\beta_1
\end{bmatrix} \\
&= \begin{bmatrix} \beta_2^t\beta_2\beta_2^t\beta_2+\beta_2^t\beta_1\beta_1^t\beta_2 & 
-\beta_2^t\beta_2\beta_2^t\beta_1-\beta_2^t\beta_1\beta_1^t\beta_1\\ 
-\beta_1^t\beta_2\beta_2^t\beta_2-\beta_1^t\beta_1\beta_1^t\beta_2 & 
\beta_1^t\beta_2\beta_2^t\beta_1+\beta_1^t\beta_1\beta_1^t\beta_1
\end{bmatrix} \\
&= \begin{bmatrix} \beta_2^t\beta_2 & -\beta_2^t\beta_1\\ 
-\beta_1^t\beta_2 & \beta_1^t\beta_1\end{bmatrix}=\Pi_{1}.
\end{align*}
Also,
\begin{align*}
 \Pi_{1}  \begin{bmatrix} -\beta_2^tx \\ \beta_1^tx \end{bmatrix}
=\begin{bmatrix} \beta_2^t\beta_2 & -\beta_2^t\beta_1\\ 
-\beta_1^t\beta_2 & \beta_1^t\beta_1\end{bmatrix}\begin{bmatrix} -\beta_2^tx\\ 
\beta_1^tx\end{bmatrix}=\begin{bmatrix} -\beta_2^t\beta_2\beta_2^tx-\beta_2^t\beta_1\beta_1^tx\\ 
\beta_1^t\beta_2\beta_2^tx+\beta_1^t\beta_1\beta_1^tx\end{bmatrix}=\begin{bmatrix} -\beta_2^tx\\ 
\beta_1^tx\end{bmatrix},
\end{align*}
and
\begin{align*}
 \Pi_{1} \begin{bmatrix} \beta_1^tx \\ \beta_2^tx\end{bmatrix}
=\begin{bmatrix} \beta_2^t\beta_2 & -\beta_2^t\beta_1\\ 
-\beta_1^t\beta_2 & \beta_1^t\beta_1\end{bmatrix}\begin{bmatrix} \beta_1^tx\\ 
\beta_2^tx\end{bmatrix}=\begin{bmatrix} \beta_2^t\beta_2\beta_1^tx-\beta_2^t\beta_1\beta_2^tx\\ 
-\beta_1^t\beta_2\beta_1^tx+\beta_1^t\beta_1\beta_2^tx\end{bmatrix}=0.
\end{align*}
Therefore,
\begin{align}
 \tau_{1}=2\Pi_{1}-I_{2n}=\begin{bmatrix} 2\beta_2^t\beta_2-I & -2\beta_2^t\beta_1\\ 
-2\beta_1^t\beta_2 & 2\beta_1^t\beta_1-I\end{bmatrix}.
\end{align}

Computing directly, we find 
\begin{equation*}
\begin{aligned}
\mathcal{B} \tau_1 \mathcal{B}^t
&= 
\begin{bmatrix} 
\beta_1 & \beta_2\\ 
-\beta_2 & \beta_1
\end{bmatrix} 
\begin{bmatrix} 
2 \beta_2^t \beta_2 - I & -2 \beta_2^t \beta_2 \\ 
- 2 \beta_1^t \beta_2 & 2 \beta_1^t \beta_1 - I
\end{bmatrix} 
\begin{bmatrix} 
\beta_1^t & - \beta_2^t \\ 
\beta_2^t & \beta_1^t
\end{bmatrix} \\
&= 
\begin{bmatrix} 
\beta_1 & \beta_2\\ 
-\beta_2 & \beta_1
\end{bmatrix} 
\begin{bmatrix} 
- \beta_1^t & - \beta_2^t \\ 
-\beta_2^t & \beta_1^t
\end{bmatrix}  
=
\begin{bmatrix} 
-I_n & 0 \\ 
0 & I_n
\end{bmatrix}. 
\end{aligned}
\end{equation*}

We have, then, 
\begin{equation*}
\begin{aligned}
\mathcal{B} \tau_1 \mathcal{B}^t \mathcal{M}_D^t 
&= 
\begin{bmatrix} 
-I_n & 0 \\ 
0 & I_n
\end{bmatrix}
\begin{bmatrix} 
\mathbb{M}^{-1/2} & \mathbb{M}^{-1/2}M_D \\ 
- \mathbb{M}^{-1/2}M_D & \mathbb{M}^{-1/2}
\end{bmatrix} \\
&=
\begin{bmatrix} 
- \mathbb{M}^{-1/2} & -\mathbb{M}^{-1/2}M_D \\ 
- \mathbb{M}^{-1/2}M_D & \mathbb{M}^{-1/2}
\end{bmatrix} \\
&= 
\begin{bmatrix} 
\mathbb{M}^{-1/2} & - \mathbb{M}^{-1/2}M_D \\ 
\mathbb{M}^{-1/2}M_D & \mathbb{M}^{-1/2}
\end{bmatrix}
\begin{bmatrix} 
-I_n & 0 \\ 
0 & I_n
\end{bmatrix}.
\end{aligned}
\end{equation*}

In this way, we see that 
\begin{equation*}
\begin{aligned}
\mathcal{M}_D \mathcal{B} \tau_1 \mathcal{B}^t \mathcal{M}_D^t \tau_1
&= 
\mathcal{M}_D^2 \begin{bmatrix} 
-I_n & 0 \\ 
0 & I_n
\end{bmatrix}
\begin{bmatrix} 
2 \beta_2^t \beta_2 - I & -2 \beta_2^t \beta_2 \\ 
- 2 \beta_1^t \beta_2 & 2 \beta_1^t \beta_1 - I
\end{bmatrix} \\
&= \mathcal{M}_D^2 
\begin{bmatrix} 
\beta_1^t \beta_1 - \beta_2^t \beta_2 & 2 \beta_2^t \beta_1 \\ 
- 2 \beta_2^t \beta_1 & \beta_1^t \beta_1 - \beta_2^t \beta_2
\end{bmatrix}.   
\end{aligned}
\end{equation*}
\end{proof} 

We observe that $\mathfrak{B}$ is a unitary matrix in the 
form 
\begin{equation*}
\mathfrak{B} = 
\begin{bmatrix} 
\beta_1^t \beta_1 - \beta_2^t \beta_2 & 2 \beta_2^t \beta_1 \\ 
- 2 \beta_2^t \beta_1 & \beta_1^t \beta_1 - \beta_2^t \beta_2
\end{bmatrix}
= \begin{bmatrix} 
\beta_1^t \beta_1 - \beta_2^t \beta_2 & 0 \\ 
0 & \beta_1^t \beta_1 - \beta_2^t \beta_2
\end{bmatrix}
+ J
\begin{bmatrix} 
- 2 \beta_2^t \beta_1 & 0 \\ 
0 & - 2 \beta_2^t \beta_1
\end{bmatrix},
\end{equation*}
and can be associated with the $n \times n$ complex unitary matrix
\begin{equation*}
\tilde{\mathfrak{B}} = (\beta_1^t \beta_1 - \beta_2^t \beta_2) - i 2 \beta_2^t \beta_1.
\end{equation*}

In this way, $W_{s, \lambda}$ corresponds with the complex $n \times n$ matrix
\begin{equation*}
\tilde{W}_{s, \lambda} = \tilde{\mathcal{M}}_D^2 
\tilde{\mathfrak{B}}.
\end{equation*}
Here, 
\begin{equation*}
\begin{aligned}
\tilde{\mathcal{M}}_D^2 &= (\mathbb{M}^{-1/2} + i \mathbb{M}^{-1/2} M_D)^2 \\
&= (I+M_D^2)^{-1} (I+iM_D)^2 = ((I+iM_D)(I-iM_D))^{-1} (I+iM_D)^2 \\
&= (I - iM_D)^{-1} (I + iM_D),
\end{aligned}
\end{equation*}
which is the standard Cayley transform of $i M_D$. 

In the event that $X$ is invertible, we find (using the definition of 
$M_D$) that 
\begin{equation*}
\tilde{\mathcal{M}}_D^2 = (X+iZ)(X-iZ)^{-1},
\end{equation*}
and more generally we can arrive at this form by repeating 
our calculations using $Q$ in place of $U$. We conclude with the matrix
we'll use for our Maslov index calculations, 
\begin{equation*}
\tilde{W}_{s, \lambda} = (X+iZ)(X-iZ)^{-1} 
((\beta_1^t \beta_1 - \beta_2^t \beta_2) - i 2 \beta_2^t \beta_1).
\end{equation*}

\begin{remark} \label{sturm_liouville_remark} We are now in a position to 
indicate how the Sturm-Liouville oscillation theorem for $n=1$ follows 
from Theorem \ref{main}. In this case (i.e., for $n=1$) we have 
\begin{equation*}
\tilde{W}_{s, \lambda} 
= \frac{y (s; \lambda)+iy' (s; \lambda)}{y (s; \lambda)-iy' (s; \lambda)} (\beta_1^2 - \beta_2^2 - i 2 \beta_1 \beta_2),
\end{equation*}
and for simplicity let's focus on the case in which we have Dirichet 
boundary conditions at both $x = 0$ and $x=1$ (so that $\alpha_1, \beta_1 = 1$ 
and $\alpha_2, \beta_2 = 0$). In this case, we have a crossing at $s^*$ 
(so that $\tilde{W}_{s^*, \lambda} = -1$) if and only if $y(s^*; \lambda) = 0$. 
We'll see in Section \ref{monotonicity_in_s} that in this case crossings on $S^1$ must 
occur in the clockwise direction, and since $\tilde{W}_{0, \lambda} = -1$
(due to the Dirichlet condition at $x = 0$) we will have 
$\tilde{W}_{s_0, \lambda} = e^{i (\pi - \epsilon)}$ for $s_0$ sufficiently
small (and some $\epsilon > 0$). The Principal Maslov Index will now be the
negative of a count of the number of times $\tilde{W}_{s, 0}$ 
crosses $-1$ as $s$ goes from $s_0$ to $1$. Moreover, each of these crossings 
will correspond with a zero of $y (s;0)$ (as noted above), and so we can conclude
from Theorem \ref{main} that the number of negative eigenvalues of $H$ is 
precisely the number of zeros of $y$. (The standard Sturm-Liouville oscillation 
theorem for $n=1$ requires $\lambda = 0$ to be an eigenvalue, but we clearly 
do not need that.) Other cases follow similarly.
\end{remark}

Our final preliminary lemma addresses continuity of the path of Lagrangian 
subspaces $\{\ell (s,\lambda)\}_{(s,\lambda) \in \Gamma}$.

\begin{lemma} \label{continuity_lemma}
For system (\ref{eq:hill}), let $V \in C([0, 1])$ be a symmetric matrix 
in $\mathbb{R}^{n \times n}$, and let $\alpha_1$, $\alpha_2$,
$\beta_1$, and $\beta_2$ be as in (\ref{RN})-(\ref{4.4}). Then the 
path of Lagrangian subspaces $\{\ell (s,\lambda)\}_{(s,\lambda) \in \Gamma}$
is continuous.
\end{lemma}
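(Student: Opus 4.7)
The plan is to prove that $(s,\lambda)\mapsto \ell(s,\lambda)$ is jointly continuous on $\Gamma$ by first establishing continuity of a convenient frame for $\ell(s,\lambda)$ and then translating to continuity in the Lagrangian Grassmannian via the unitary representative of Section~\ref{maslov_section}. A small but important choice is to work with the frame representation $Q_{s,\lambda}=\mathcal{X}_D\mathcal{B}$ of Lemma~\ref{us_lemma} rather than $U_{s,\lambda}=\mathcal{M}_D\mathcal{B}$, since $\mathcal{X}_D$ is defined for all $(s,\lambda)\in\Gamma$, whereas $\mathcal{M}_D$ involves $M_D=ZX^{-1}$ and so degenerates at Dirichlet conjugate points.

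First, I would fix a frame for $\ell(s,\lambda)$ by prescribing the initial data at $x=0$ to be $X(0)=\alpha_2^t$ and $Z(0)=-\alpha_1^t$, and then integrating (\ref{first_order}) out to $x=s$. These initial data are independent of both $s$ and $\lambda$, and (\ref{RN})--(\ref{4.4new}) guarantee that $\mathbf{X}(0)$ has rank $n$. Since the coefficient matrix $\mathbb{A}(x;\lambda)$ is jointly continuous in $(x,\lambda)\in[0,1]\times\mathbb{R}$, standard continuous-dependence results for linear ODEs yield joint continuity of the fundamental matrix $\Psi(x;\lambda)$. Consequently $\mathbf{X}(s,\lambda)=\Psi(s;\lambda)\mathbf{X}(0)$ is jointly continuous in $(s,\lambda)$ as a map to $\mathbb{R}^{2n\times n}$, and invertibility of $\Psi(s;\lambda)$ ensures the columns of $\mathbf{X}(s,\lambda)$ remain linearly independent throughout $\Gamma$.

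Second, I would translate frame continuity into continuity of $\ell(s,\lambda)$ in $\Lambda(n)$. Because $\mathbf{X}(s,\lambda)$ has full column rank on all of $\Gamma$, the Gram-type matrix $\mathbb{X}(s,\lambda)=X^tX+Z^tZ$ is symmetric and positive definite, so $\mathbb{X}^{-1/2}$ is jointly continuous by the functional calculus for symmetric positive definite matrices. Thus $\mathcal{X}_D$, and hence $Q_{s,\lambda}=\mathcal{X}_D\mathcal{B}$, is a jointly continuous family in $\mathfrak{U}_J$; by Lemma~\ref{us_lemma}, $\ell(s,\lambda)=Q_{s,\lambda}(\ell_1^{\perp})$. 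Since the action of unitary matrices on the fixed element $\ell_1^{\perp}\in\Lambda(n)$ is continuous, and since both the orthogonal projection onto $\ell(s,\lambda)$ (Lemma~\ref{propWs}(i)) and the complex unitary $\tilde{W}_{s,\lambda}$ used to define $\mas$ depend continuously on $Q_{s,\lambda}$, the path $\{\ell(s,\lambda)\}_{(s,\lambda)\in\Gamma}$ is continuous in every sense needed for the subsequent Maslov-index computation. There is no serious obstacle here; the only step requiring care is the choice of $Q_{s,\lambda}$ over $U_{s,\lambda}$, which avoids having to remove Dirichlet conjugate points from $\Gamma$.
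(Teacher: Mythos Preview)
Your proposal is correct and follows essentially the same approach as the paper: both arguments obtain joint continuity of the frame $\mathbf{X}(s,\lambda)$ from continuous dependence of the linear ODE (\ref{first_order}) on $(x,\lambda)$, and then pass to continuity in $\Lambda(n)$ using the positive-definiteness of $\mathbb{X}=X^tX+Z^tZ$. The only packaging difference is that the paper goes directly through the orthogonal projection $\mathcal{P}_{s,\lambda}=\mathbf{X}\,\mathbb{X}^{-1}\mathbf{X}^t$ (using the metric $d(\ell_1,\ell_2)=\|\mathcal{P}_1-\mathcal{P}_2\|$), whereas you route through the unitary $Q_{s,\lambda}=\mathcal{X}_D\mathcal{B}$; your observation that $Q_{s,\lambda}$ must be used rather than $U_{s,\lambda}$ is correct but becomes moot in the paper's version, since the projection formula never requires $X$ to be invertible.
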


\begin{proof} Following \cite{F} (p. 274), we specify our metric on the 
Lagrangian Grassmannian $\Lambda (n)$ in terms of orthogonal projections 
onto elements $\ell \in \Lambda (n)$. Precisely, let $\mathcal{P}_i$ 
denote the orthogonal projection matrix onto $\ell_i \in \Lambda (n)$
for $i = 1,2$. We take our metric $d$ on $\Lambda (n)$ to be defined 
by 
\begin{equation*}
d (\ell_1, \ell_2) := \|\mathcal{P}_1 - \mathcal{P}_2 \|,
\end{equation*} 
where $\| \cdot \|$ can denote any matrix norm.

For $\ell (s,\lambda)$, we have a frame $\mathbf{X}$, and it follows from 
elementary matrix theory that the associated orthogonal projection matrix 
$\mathcal{P}_{s, \lambda}$ satisfies 
$\mathcal{P}_{s, \lambda} = \mathbf{X} (\mathbf{X}^t \mathbf{X})^{-1} \mathbf{X}^t$.
Computing directly, we find 
\begin{equation*}
\mathcal{P}_{s, \lambda} = 
\begin{pmatrix}
X \mathbb{X}^{-1} X^t & X \mathbb{X}^{-1} X^t \\
Z \mathbb{X}^{-1} X^t & Z \mathbb{X}^{-1} Z^t.
\end{pmatrix}
\end{equation*}

We see, then, that continuity of $\ell (s,\lambda)$ follows immediately 
from the continuity of $\mathcal{P}_{s, \lambda}$, which in turn follows
from the continuity of solutions of (\ref{first_order}) in $x$ and $\lambda$.
\end{proof}

\subsection{Crossings on $\Gamma_3$ ($s=1, \lambda \in [0,-\lambda_{\infty}$])}

In this section, we verify our claim in the introduction that along the top shelf 
$\Gamma_3$ the Maslov index is precisely the Morse index of $H$. The inverted 
interval $[0, -\lambda_{\infty}]$ indicates the direction of the path $\Gamma_3$.

\begin{lemma} \label{Gamma3Lemma} 
Under the assumptions of Lemma \ref{continuity_lemma} we have 
\begin{equation}
 \mo(H)=\mi(\ell, \ell_1; \Gamma_3).
 \end{equation} 
\end{lemma}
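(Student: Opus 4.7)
The plan is to show that the Maslov index along $\Gamma_3$ counts precisely the strictly negative eigenvalues of $H$, each with its multiplicity, and that this count equals $\mor(H)$. Throughout I will rely on the monotonicity in $\lambda$ asserted in Section \ref{maslov_section} (eigenvalues of $\tilde{W}_{s,\lambda}$ move clockwise on $S^1$ as $\lambda$ increases, hence counterclockwise as $\lambda$ decreases along $\Gamma_3$), and on the representation $\tilde W_{s,\lambda}=(X+iZ)(X-iZ)^{-1}\tilde{\mathfrak B}$ derived in Lemma \ref{ws_lemma}.

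First, I would identify the crossings. By Lemma \ref{propWs} (iii), a point $(1,\lambda^*)\in\Gamma_3$ is a crossing if and only if $\ell(1,\lambda^*)\cap\ell_1\neq\{0\}$, and the dimension of this intersection equals $\dim\ker(\tilde W_{1,\lambda^*}+I)$. From the definition of $\ell(1,\lambda)$ via the trace map $\Phi^{\lambda}_1$ together with the choice of frame for $\ell_1$ encoding the right boundary condition, such an intersection is equivalent to the existence of a nonzero solution of \eqref{first_order} satisfying both boundary conditions, i.e.\ $\lambda^*\in\spec(H)$, with the dimension of intersection equal to the geometric (= algebraic, since $H$ is self-adjoint) multiplicity of $\lambda^*$.

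Second, I would use monotonicity to read off the sign of each crossing. Along $\Gamma_3$, $\lambda$ decreases from $0$ to $-\lambda_{\infty}$, so eigenvalues of $\tilde W_{1,\lambda}$ traverse $S^1$ counterclockwise. Each passage through $-1$ at an interior point $\lambda^*\in(-\lambda_{\infty},0)$ therefore contributes $+\dim\ker(\tilde W_{1,\lambda^*}+I)$ to $\mi(\ell,\ell_1;\Gamma_3)$, i.e.\ contributes the multiplicity of $\lambda^*$ as an eigenvalue of $H$.

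Third, I would handle the two endpoints. For the lower endpoint $\lambda=-\lambda_{\infty}$, a standard semiboundedness estimate for $H$ (using $V\in C([0,1])$ and separated self-adjoint boundary conditions) gives a lower bound $\inf\spec(H)\geq -C$ for some $C>0$; choosing $\lambda_{\infty}>C$ ensures no eigenvalue lies at or below $-\lambda_{\infty}$, so no crossing occurs at that endpoint. For the upper endpoint $\lambda=0$: if $0\notin\spec(H)$ there is nothing to do, while if $0\in\spec(H)$ then an eigenvalue of $\tilde W_{1,\lambda}$ already sits at $-1$ at the start of $\Gamma_3$ and moves counterclockwise into $A_j$ as $\lambda$ decreases; by the convention recalled just before Definition \ref{dfnDef3.6} (an eigenvalue leaving $-1$ counterclockwise at the initial endpoint does not increment the count), this boundary crossing contributes $0$. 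This correctly excludes $\lambda=0$ from $\mor(H)$.

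Putting these pieces together, $\mi(\ell,\ell_1;\Gamma_3)$ is the sum of multiplicities of the eigenvalues of $H$ lying in $(-\lambda_{\infty},0)$, which by the semiboundedness choice of $\lambda_{\infty}$ equals the sum over all strictly negative eigenvalues of $H$, i.e.\ $\mor(H)$. The only non-routine ingredient is the monotonicity statement, and that is the step I expect to be the main obstacle; it is verified separately via a crossing-form computation differentiating $\tilde W_{1,\lambda}=(X+iZ)(X-iZ)^{-1}\tilde{\mathfrak B}$ and using $X_\lambda^t Z-Z_\lambda^t X=-\int_0^1 X^tX\,dx<0$ (obtained by differentiating the first-order system in $\lambda$), which produces a definite sign for the rotation of the spectrum on $S^1$.
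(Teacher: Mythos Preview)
Your proposal is correct and follows essentially the same route as the paper: identify crossings along $\Gamma_3$ with eigenvalues of $H$ via Lemma~\ref{propWs}, use monotonicity in $\lambda$ to fix the sign of each crossing, and handle the endpoints via the counting convention and the choice of $\lambda_\infty$. The paper proves the monotonicity in place (as Lemma~\ref{W}) rather than importing it from Section~\ref{maslov_section}, by exactly the differentiation you sketch; one small correction to your sketch is that the relevant identity is $X^t\dot Z - Z^t\dot X = -\int_0^s X^tX\,dx$ (your version $\dot X^t Z - \dot Z^t X$ carries the opposite sign), which is what makes $\tilde\Omega$ negative definite and hence the eigenvalues of $\tilde W_{1,\lambda}$ rotate clockwise as $\lambda$ increases.
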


\begin{proof}
From Lemma \ref{propWs}, we know that 
$\dim (\Phi^{\lambda}_s(Y_{\lambda}) \cap \ell_1) = \dim \ker(\tilde{W}_{s,\lambda}+I)$ for $s=1, \lambda\in[0, -\lambda_{\infty}]$.
Assume that $\lambda^*\in[0,-\lambda_{\infty}]$ is a crossing, that is, $\Phi^{\lambda^*}_1 (Y_{\lambda^*}) \cap \ell_1 \neq\{0\}$. 
Then there exists a solution of \eqref{eq:hill} such that the boundary conditions are satisfied. Therefore, $\lambda^*$ 
is an eigenvalue of $H$. Moreover, since $\Phi^{\lambda^*}_1 (Y_{\lambda^*})$ are the traces of weak solutions that satisfy the boundary condition
at $0$, 
$\dim(\ker(H-\lambda^*I))=\dim (\Phi^{\lambda^*}_L(Y_{\lambda^*}) \cap \ell_1)=\dim \ker(\tilde{W}_{1,\lambda^*}+I)$.

Next, we would like to compute the Maslov index of the path 
$\big\{\Phi_1^\lambda(Y_{1,\lambda})\big\}_{\lambda=\lambda^*-\varepsilon}^{\lambda^*+\varepsilon}$, i.e., 
the net count of the eigenvalues of $\tilde{W}_{1,\lambda}$ crossing the point $-1$ as $\lambda$ goes from $\lambda^*-\varepsilon$ to 
$\lambda^*+\varepsilon$. As a starting point, we 
differentiate $\tilde{W}_{s,\lambda}$ with respect to $\lambda$:
\begin{align*}
 \frac{\partial}{\partial \lambda}\tilde{W}_{s,\lambda}&=(\dot X+i\dot Z)(X-iZ)^{-1} \tilde{\mathfrak{B}}
 -(X+iZ)(X-iZ)^{-1}(\dot X-i\dot Z)(X-iZ)^{-1} \tilde{\mathfrak{B}} \\
 &= (\dot X+i\dot Z)(X-iZ)^{-1} \tilde{\mathfrak{B}}-
 \tilde{W}_{s, \lambda} \tilde{\mathfrak{B}}^* (\dot X-i\dot Z) (X-iZ)^{-1} \tilde{\mathfrak{B}},
\end{align*}
where $\dot{X}, \dot{Z}$ denote derivatives of $X$ and $Z$ with respect to $\lambda$, and we've
used the fact that $\tilde{\mathfrak{B}}$ is unitary.

Now, we multiply both sides by $\tilde{W}_{s, \lambda}^*$
\begin{align*}
 &\tilde{W}_{s, \lambda}^* \dot{\tilde{W}}_{s, \lambda} 
= \tilde{\mathfrak{B}}^* 
(X^t+iZ^t)^{-1} (X^t-iZ^t) (\dot X+i\dot Z)(X-iZ)^{-1} \tilde{\mathfrak{B}}
- \tilde{\mathfrak{B}}^* (\dot X-i\dot Z)(X-iZ)^{-1} \tilde{\mathfrak{B}} \\
 &= \tilde{\mathfrak{B}}^* (X^t+iZ^t)^{-1} [(X^t-iZ^t)(\dot X+i\dot Z)-(X^t+iZ^t)(\dot X-i\dot Z)](X-iZ)^{-1} \tilde{\mathfrak{B}} \\
 &= ((X-iZ)^{-1}\tilde{\mathfrak{B}})^* [2iX^t\dot Z-2iZ^t\dot X]((X-iZ)^{-1} \tilde{\mathfrak{B}}).
\end{align*}
Multiplying on the left by $\tilde{W}_{s, \lambda}$, and recalling that 
$\tilde{W}_{s, \lambda}$ is unitary, we find 
\begin{equation*}
\begin{aligned}
\dot{\tilde{W}}_{s, \lambda} &= i \tilde{W}_{s, \lambda} \tilde{\Omega},
\end{aligned} 
\end{equation*}
where 
\begin{equation*}
\tilde{\Omega} = 2 ((X-iZ)^{-1}\tilde{\mathfrak{B}})^* [X^t\dot Z-Z^t\dot X]((X-iZ)^{-1} \tilde{\mathfrak{B}}).
\end{equation*}

Let's take a close look at $X^t \dot{Z} - Z^t \dot{X}$. Taking an $s$ derivative of this 
quantity, denoted with a prime, and using 
$(\dot X)'=\dot Z$ and $(\dot Z)'=(V-\lambda I)\dot X-X$, we find
\begin{align}
 (X^t\dot Z-Z^t\dot X)' &= Z^t\dot Z+X^t((V-\lambda I)\dot X-X)-X^t(V-\lambda I)\dot X-Z^t\dot Z\\
 &=-X^tX.
\end{align}
After integration, we arrive at
\begin{equation*}
X^t\dot Z-Z^t\dot X=-\int_0^sX^t(t,\lambda)X(t,\lambda)dt+X^t(0,\lambda)\frac{d}{d\lambda}Z(0,\lambda)-
Z^t(0,\lambda)\frac{d}{d\lambda}X(0,\lambda).
\end{equation*}
In the current setting, $X(0,\lambda)$ and $Z (0, \lambda)$ are constant in 
$\lambda$, so that
\begin{equation*}
X^t\dot Z-Z^t\dot X=-\int_0^sX^t(t,\lambda)X(t,\lambda)dt,
\end{equation*} 
and 
\begin{equation*}
\tilde{\Omega} = - 2 ((X-iZ)^{-1}\tilde{\mathfrak{B}})^* \int_0^sX^t(t,\lambda)X(t,\lambda)dt
((X-iZ)^{-1} \tilde{\mathfrak{B}}).
\end{equation*}

It's clear that $\tilde{\Omega}$ is self-adjoint, and we also claim that
it's negative definite. Indeed, if we temporarily set $A = (X-iZ)^{-1} \tilde{\mathfrak{B}}$
and $B = \int_0^sX^t(s,\lambda)X(s,\lambda)dt$ we see that 
$\tilde{\Omega} = -2 A^* B A$, where $B$ is positive definite (when 
$X$ is invertible) and $A$ is invertible. It follows immediately
that $\tilde{\Omega}$ is negative definite.

Finally, we'll show in Lemma \ref{W} below that under these conditions
the eigenvalues of $\tilde{W}_{s,\lambda}$ move clockwise on the unit 
circle as $\lambda$ increases from $\lambda^*-\varepsilon$ to 
$\lambda^*+\varepsilon$, or counterclockwise as $\lambda$ decreases 
from $\lambda^*+\varepsilon$ to 
$\lambda^*-\varepsilon$. Therefore, $\mi(\ell, \ell_1; \Gamma_3)=\dim \ker(\tilde{W}_{1, \lambda^*}+I)$, 
and so $\mi(\ell, \ell_1; \Gamma_3)=\mo(H)$.
\end{proof}

\begin{remark} As discussed in \cite{F}, p. 307, the signature of 
$\tilde{\Omega}$ corresponds precisely with the signature of the
crossing form associated with $(\ell, \ell_1)$ at any intersection. 
\end{remark}

\begin{lemma}\lb{W}
Let $\tilde{W} (\tau)$ be a smooth family of unitary $n \times n$ matrices on some interval $I$, and 
suppose $\tilde{W} (\tau)$ satisfies the differential equation 
$\frac{d}{d\tau} \tilde{W} (\tau) = i \tilde{W} (\tau) \tilde{\Omega} (\tau)$, 
where $\tilde{\Omega} (\tau)$ is continuous, self-adjoint and negative-definite. 
Then the eigenvalues of $\tilde{W} (\tau)$ move clockwise on the unit circle as $\tau$ increases.  
\end{lemma}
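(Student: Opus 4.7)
My plan is to track an individual eigenvalue curve $e^{i\theta(\tau)}$ of $\tilde{W}(\tau)$ and derive a clean formula for $\dot{\theta}(\tau)$ in terms of $\tilde{\Omega}$; the claim then reduces to the sign of a quadratic form.

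First I would handle the case of a simple eigenvalue. Given a smooth unit eigenvector $v(\tau)$ with $\tilde{W}(\tau) v(\tau) = e^{i\theta(\tau)} v(\tau)$, I would start from the scalar identity
\[
\langle \tilde{W}(\tau) v(\tau), v(\tau) \rangle_{\mathbb{C}^n} = e^{i\theta(\tau)}
\]
and differentiate in $\tau$. Using unitarity (so $\tilde{W}^* v = e^{-i\theta} v$) together with $\|v\| \equiv 1$, the two terms involving $\dot v$ combine into $e^{i\theta}\frac{d}{d\tau}\|v\|^2 = 0$, leaving $\langle \dot{\tilde{W}} v, v \rangle = i\dot{\theta}\,e^{i\theta}$. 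Substituting the ODE $\dot{\tilde{W}} = i\tilde{W}\tilde{\Omega}$ and applying $\tilde{W}^* v = e^{-i\theta} v$ on the left factor produces $\langle \dot{\tilde{W}} v, v\rangle = i e^{i\theta} \langle \tilde{\Omega}(\tau) v, v\rangle$. Cancelling the common factor $i e^{i\theta}$ yields the key identity
\[
\dot{\theta}(\tau) \;=\; \langle \tilde{\Omega}(\tau) v(\tau), v(\tau)\rangle_{\mathbb{C}^n},
\]
which is strictly negative by the negative-definiteness of $\tilde{\Omega}$, so $\theta$ is strictly decreasing, i.e.\ the eigenvalue $e^{i\theta(\tau)}$ rotates clockwise on $S^1$.

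For the case of a multiple eigenvalue, I would invoke Rellich's theorem for smooth self-adjoint/unitary families: if $e^{i\theta_0}$ is a $k$-fold eigenvalue of $\tilde{W}(\tau_0)$ with eigenspace $V_0 = \ker(\tilde W(\tau_0) - e^{i\theta_0} I)$, then in a neighborhood of $\tau_0$ there exist $k$ smooth eigenvalue curves $e^{i\theta_j(\tau)}$ together with a smooth orthonormal frame $\{v_j(\tau)\}_{j=1}^k$ whose values at $\tau_0$ span $V_0$. Applying the simple-eigenvalue computation curve-by-curve shows that $\dot{\theta}_j(\tau_0) = \langle \tilde{\Omega}(\tau_0) v_j(\tau_0), v_j(\tau_0)\rangle$; equivalently, the numbers $\{\dot{\theta}_j(\tau_0)\}$ are the eigenvalues of the compression $P_0\,\tilde{\Omega}(\tau_0)\,P_0$ of $\tilde{\Omega}(\tau_0)$ to $V_0$, where $P_0$ is orthogonal projection onto $V_0$. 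Since a compression of a negative-definite operator to any subspace is again negative definite, every $\dot{\theta}_j(\tau_0) < 0$, so every local eigenvalue branch rotates clockwise through $e^{i\theta_0}$.

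The only step requiring care — not really an obstacle, but the one to be pedantic about — is the smoothness of eigenvalues and eigenprojections at a degeneracy; this is exactly the content of the Rellich/Kato perturbation theorem for smooth unitary families and is standard. Everything else is a direct differentiation combined with the hypothesis that $\tilde{\Omega}(\tau) < 0$.
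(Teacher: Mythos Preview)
Your argument is correct and considerably more direct than the paper's. The paper avoids working with eigenvalue branches of $\tilde W$ directly: instead it fixes $\tau_0$, writes $W(\tau)=W(\tau_0)e^{iR(\tau)}$ with $R(\tau_0)=0$ so that $R'(\tau_0)=\tilde\Omega(\tau_0)$, and then passes to the Cayley transform $A(\tau)=i(e^{i\theta}I-W(\tau))^{-1}(e^{i\theta}I+W(\tau))$ for a fixed $e^{i\theta}\notin\sigma(W(\tau_0))$. A page of algebra shows $A'(\tau_0)$ is congruent to $2R'(\tau_0)=2\tilde\Omega(\tau_0)$, hence negative definite; Kato's Theorem~II.5.4 for \emph{self-adjoint} matrices then gives that the eigenvalues of $A(\tau)$ decrease, and the explicit Cayley relation $\lambda_k=e^{i(\theta+\pi)}\frac{1+ia_k}{1-ia_k}$ translates this into clockwise motion of the $\lambda_k$. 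The virtue of this detour is that it only invokes perturbation theory for Hermitian matrices, which is the textbook setting. Your route --- differentiate the eigenvalue equation itself and read off $\dot\theta=\langle\tilde\Omega v,v\rangle$ --- is shorter and conceptually cleaner; the only caveat is that your appeal to ``Rellich's theorem'' is slightly miscalibrated: Rellich requires real-analytic dependence, whereas here $\tilde W$ is only assumed smooth (indeed $\tilde\Omega$ is only assumed continuous). What you actually need and use is first-order perturbation theory for a $C^1$ normal family, namely that at a $k$-fold eigenvalue the one-sided derivatives of the $k$ branches are the eigenvalues of the compression $P_0\tilde\Omega(\tau_0)P_0$; this is the unitary analogue of Kato~II.5.4 and suffices for your conclusion without needing globally smooth eigenvector frames.
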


\begin{proof}
As a start, fix some $\tau_0 \in [0, 1]$, and denote the eigenvalues of $W(\tau_0)$
by $\{\lambda_k (\tau_0)\}_{k=1}^n$. We claim that for $\tau$ near $\tau_0$ we can express
$W(\tau)$ as 
\begin{equation*}
W(\tau) = W(\tau_0) e^{i R(\tau)}, 
\end{equation*} 
for some appropriate matrix $R(\tau)$. Indeed, we know $R(\tau)$ exists, because 
$W(\tau_0)^{-1} W (\tau)$ is invertible, and so has a logarithm. It's 
convenient to notice here that $R(\tau_0) = 0$. 

Next, we compute $W'(\tau_0)$. For this, we write 
\begin{equation*}
W (\tau) = W (\tau_0) \sum_{j=1}^{\infty} \frac{i^j}{j!} R(\tau)^j,
\end{equation*}
so that 
\begin{equation*}
W' (\tau) = W (\tau_0) \sum_{j=1}^{\infty} \frac{i^j}{j!} \frac{d}{d\tau} R(\tau)^j.
\end{equation*}
Generally, we run into a commutation problem when computing derivatives of
powers of matrices, but since $R(\tau_0) = 0$ we see that 
\begin{equation*}
\frac{d}{d\tau} R(\tau)^j  \Big|_{\tau = \tau_0} = 0,
\end{equation*}
for $j = 2, 3, \dots$. In this way, 
\begin{equation*}
W'(\tau_0) = i W (\tau_0) R' (\tau_0),
\end{equation*}
and we recognize that $\Omega (\tau_0) = R' (\tau_0)$. 

According to Theorem II.5.4 in \cite{Kato}, if $\Omega (\tau_0)$
is negative definite then the eigenvalues of $R(\tau_0)$, which 
we denote $\{r_k (\tau_0)\}_{k=1}^n$, are decreasing as $\tau$
increases at $\tau_0$. By spectral mapping, the eigenvalues of 
$e^{i R(\tau_0)}$ are $\{e^{i r_k (\tau_0)}\}_{k=1}^n$.

At this point, we proceed similarly as in \cite{F}, p. 306. We 
fix any $\theta$ so that 
$e^{i \theta} \notin \{\lambda_k^*\}_{k=1}^n$, and set 
\begin{equation*}
A (\tau) := i (e^{i \theta} I - W(\tau))^{-1} (e^{i \theta} I + W(\tau)),
\end{equation*}
for $\tau$ near $\tau_0$.
Proceeding as in \cite{F}, we claim that  
\begin{equation*} \label{claim}
A'(\tau_0) = \Big((e^{i \theta}I - W(\tau_0))^{-1} \Big)^*
2 R' (\tau_0)
\Big(e^{i \theta}I - W(\tau_0)^{-1} \Big)
\end{equation*}
To see this, we compute 
\begin{equation*}
\begin{aligned}
A' (\tau) &= -i (e^{i \theta}I - W(\tau))^{-1} (-W'(\tau))
(e^{i \theta}I - W(\tau))^{-1} (e^{i \theta}I + W(\tau)) \\
&\quad + i (e^{i \theta}I - W(\tau))^{-1} W' (\tau) \\
&= i (e^{i \theta}I - W(\tau))^{-1} W' (\tau)
\Big{\{}I + (e^{i \theta}I - W(\tau))^{-1} (e^{i \theta}I + W(\tau)) \Big{\}} \\
&= i (e^{i \theta}I - W(\tau))^{-1} W' (\tau) (e^{i \theta}I - W(\tau))^{-1}
\Big{\{}(e^{i \theta}I - W(\tau)) + (e^{i \theta}I + W(\tau)) \Big{\}} \\
&= i (e^{i \theta}I - W(\tau))^{-1} W' (\tau) (e^{i \theta}I - W(\tau))^{-1}
2e^{i \theta}.
\end{aligned}
\end{equation*}
Continuing, we see that 
\begin{equation*}
\begin{aligned}
A' (\tau) &= i (e^{i \theta}I - W(\tau))^{-1} e^{i \theta} 2 W' (\tau) (e^{i \theta}I - W(\tau))^{-1} \\
&= i \Big(e^{-i \theta} (e^{i \theta}I - W(\tau))\Big)^{-1} 2 W' (\tau) (e^{i \theta}I - W(\tau))^{-1} \\
&= i (I - e^{-i \theta} W(\tau))^{-1} 2 W' (\tau) (e^{i \theta}I - W(\tau))^{-1} \\
&= i (I - e^{-i \theta} W(\tau))^{-1} 2 W(\tau) W(\tau)^{-1} W' (\tau) (e^{i \theta}I - W(\tau))^{-1} \\
&= i \Big(W(\tau)^{-1} (I - e^{-i \theta} W(\tau))\Big)^{-1} 2 W(\tau)^{-1} W' (\tau) (e^{i \theta}I - W(\tau))^{-1}.
\end{aligned}
\end{equation*}
Now, we use the fact that $W (\tau)$ is unitary to see that 
\begin{equation*}
\begin{aligned}
A'(\tau) &= i (W(\tau)^* - e^{-i \theta} I)^{-1} 2 W(\tau)^* W' (\tau) (e^{i \theta}I - W(\tau))^{-1} \\
&= i \Big((- W(\tau) + e^{i \theta} I)^{-1}\Big)^* (- 2 W(\tau)^* W' (\tau)) \Big(e^{i \theta}I - W(\tau)\Big)^{-1}.
\end{aligned}
\end{equation*}
Finally, recalling that $W'(\tau_0) = i W(\tau_0) R'(\tau_0)$, we see that 
$R'(\tau_0) = -i W(\tau_0)^* W' (\tau_0)$, giving the claim.

We see from (\ref{claim}) that $A'(\tau_0)$ is negative definite (since $R' (\tau_0)$ is).
We conclude (again, from Theorem II.5.4 in \cite{Kato}) that the eigenvalues of $A (\tau)$
are decreasing as $\tau$ increases at $\tau_0$. 

At this point, we would like to relate the motion of the eigenvalues of $A(\tau)$ (which
we understand) to the motion of the eigenvalues of $W(\tau)$ (which determine the Maslov
index). We denote the eigenvalues of $A(\tau)$ by $\{a_k (\tau)\}_{k=1}^n$ and recall
that we are denoting the eigenvalues of $W(\tau)$ by $\{\lambda_k (\tau)\}_{k=1}^n$.
By spectral mapping, we have (with an appropriate labeling scheme)
\begin{equation*}
a_k (\tau) = i (e^{i \theta} - \lambda_k (\tau))^{-1} (e^{i \theta} + \lambda_k (\tau)),
\end{equation*}
from which we find 
\begin{equation*}
\lambda_k = - e^{i \theta} \frac{1 + ia_k}{1 - i a_k}
= e^{i(\theta + \pi)} \frac{1 + ia_k}{1 - i a_k}. 
\end{equation*}
In order to better understand this relationship, let $b_k$ satisfy
\begin{equation*}
e^{i b_k} = \frac{1 + ia_k}{1 - i a_k},
\end{equation*}
so that 
\begin{equation*}
b_k = \tan^{-1} \frac{2 a_k}{1 - a_k^2}. 
\end{equation*}

As $a_k$ moves from $-\infty$ to $-1$, $b_k$ corresponds with counterclockwise
rotation along $S^1$ from $(-1, 0)$ to $(0, -1)$. Likewise, as $a_k$ moves from
$-1$ to $+1$, $b_k$ corresponds with rotation in the counterclockwise direction
from $(0,-1)$ to $(0,1)$. Finally, as $a_k$ moves from $1$ to $+\infty$, $b_k$
corresponds with rotation from $(0,1)$ to $(-1,0)$, closing a single full loop 
around $S^1$. Summarizing, we see that there is a monotonic relationship between
the motion of $a_k$ on $\mathbb{R}$ and the motion of $e^{i b_k}$ on $S^1$. 
(This is a standard, well-known property of the Cayley Transform on $\mathbb{R}$.)   

We see, then, that at any $\tau^* \in [0, 1]$ $a_k (\tau)$ decreases through 
$\tau^*$, and correspondingly $\lambda_k (\tau)$ rotates in the clockwise
direction. Since $\tau^*$ is arbitrary, we conclude that the eigenvalues of 
$W(\tau)$ rotate monotonically clockwise as $\tau$ increases from $0$ to 
$1$.   
\end{proof}

\subsection{No crossings on $\Gamma_4$} 
Associated with $H(s)$, we introduce the operator family $L(s)$
\begin{equation*}
\begin{aligned}
L(s)u &= (-\frac{d^2}{dx^2}+s^2V(sx)-s^2\lambda)u,\\
\dom(L(s))&=\Big\{
u \in H^2 (0,1): \alpha_1 u(0)+\frac{1}{s}\alpha_2u'(0)=0; \, \beta_1 u(1)+\frac{1}{s}\beta_2u'(1)=0\Big\}.
\end{aligned}
\end{equation*}

We would like to show that there are no crossings on $\Gamma_4$ provided $\lambda_{\infty}=\lambda_{\infty}(s_0)$ is large enough. 

\begin{lemma}\lb{lem:Gamma4} 
Suppose $V \in C([0,1];\mathbb{R}^n \times \mathbb{R}^n)$ is symmetric.
For each $s_0 \in(0,1]$ there exists a positive $\lambda_{\infty}=\lambda_{\infty}(s_0)$ 
such that the path $\Phi_{s}^{\lambda}(Y_{\lambda})$ has no crossings for any fixed 
$\lambda\in(-\infty,-\lambda_\infty]$ as $s$ changes from $s_0$ to $1$. In particular, 
the path $\Phi_{s}^{\lambda}(Y_{\lambda})$ has no crossings on $\Gamma_4$.
\end{lemma}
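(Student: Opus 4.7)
The plan is to translate the statement into a spectral one and then show that $H_s$, viewed as a family of self-adjoint operators on $[0,s]$, has a uniform lower bound over $s \in [s_0,1]$. Choosing $\lambda_\infty$ strictly below this bound then rules out crossings on $\Gamma_4$.

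\emph{Step 1: Spectral reformulation.} A crossing at $(s,-\lambda_\infty) \in \Gamma_4$ means $\Phi_s^{-\lambda_\infty}(Y_{-\lambda_\infty}) \cap \ell_1 \neq \{0\}$, which, as observed just before Theorem \ref{th:lagrange}, is equivalent to $-\lambda_\infty \in \sigma(H_s)$. Thus it suffices to produce $M = M(s_0) > 0$ such that $\sigma(H_s) \subset [-M,\infty)$ for every $s \in [s_0,1]$ and then take $\lambda_\infty > M$.

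\emph{Step 2: Quadratic form.} Since $H_s$ is self-adjoint (the boundary conditions \eqref{RN}--\eqref{4.4} are self-adjoint for any $s > 0$), I compute for $y \in \dom(H_s)$, by integration by parts,
\begin{equation*}
\langle H_s y, y\rangle_{L^2(0,s)} = \int_0^s |y'|^2\,dx + \int_0^s \langle V y, y\rangle\,dx - \langle y'(s), y(s)\rangle + \langle y'(0), y(0)\rangle.
\end{equation*}
Using the Birman--Kuchment parameterization stated in the adapted theorem from \cite{BK}, the domain condition $\alpha_1 y(0)+\alpha_2 y'(0)=0$ forces $P_{D_0} y(0) = 0$, $P_{N_0} y'(0) = 0$, and $P_{R_0} y'(0) = \Lambda_0 P_{R_0} y(0)$, so that
\begin{equation*}
\langle y'(0), y(0)\rangle = \langle \Lambda_0 P_{R_0} y(0), P_{R_0} y(0)\rangle,
\end{equation*}
and analogously at $x=s$. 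Both terms are bounded by $(\|\Lambda_0\|+\|\Lambda_1\|)(|y(0)|^2+|y(s)|^2)$.

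\emph{Step 3: Uniform trace inequality.} I would next establish the trace bound: for every $\epsilon>0$ there exists $C_\epsilon = C_\epsilon(s_0)$ such that, for all $s \in [s_0,1]$ and all $y \in H^1((0,s);\mathbb{R}^n)$,
\begin{equation*}
|y(0)|^2 + |y(s)|^2 \leq \epsilon \int_0^s |y'|^2\,dx + C_\epsilon \int_0^s |y|^2\,dx.
\end{equation*}
This follows by writing $|y(0)|^2 \leq 2|y(t)|^2 + 2s\int_0^s|y'|^2$, averaging over $t \in [0,s]$ (and similarly for $|y(s)|^2$) to obtain bounds of the form $\frac{2}{s_0}\int|y|^2+2\int|y'|^2$, then rescaling to absorb the $H^1$-seminorm coefficient into $\epsilon$.

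\emph{Step 4: Uniform lower bound and conclusion.} Applying Step~3 with $\epsilon = (2(\|\Lambda_0\|+\|\Lambda_1\|)+1)^{-1}$ (or any value that leaves a positive coefficient in front of $\|y'\|_{L^2}^2$), and combining with $\int_0^s\langle Vy,y\rangle\,dx \geq -\|V\|_{C([0,1])}\|y\|_{L^2}^2$, I obtain
\begin{equation*}
\langle H_s y, y\rangle \geq \tfrac{1}{2}\int_0^s|y'|^2\,dx - M\,\|y\|_{L^2(0,s)}^2
\end{equation*}
for some $M = M(s_0, \|V\|_\infty, \|\Lambda_0\|, \|\Lambda_1\|)$ independent of $s \in [s_0,1]$. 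Therefore $H_s \geq -M$ uniformly, and choosing $\lambda_\infty(s_0) > M$ ensures $-\lambda_\infty \notin \sigma(H_s)$ for every $s \in [s_0,1]$, which is the claim.

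The main obstacle is the uniform trace inequality in Step 3, since the interval length $s$ is itself varying; the key point there is that the lower bound $s \geq s_0 > 0$ prevents the trace constant from blowing up. Once that estimate is in hand, the rest is a standard semiboundedness argument for a self-adjoint Schrödinger operator on an interval.
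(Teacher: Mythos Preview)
Your proposal is correct and follows essentially the same approach as the paper: compute the quadratic form of the operator by integration by parts, handle the boundary terms via the projection decomposition from \cite{BK}, absorb them using a trace inequality with a small parameter, and conclude uniform semiboundedness for $s\in[s_0,1]$.  The only cosmetic difference is that the paper first rescales to the fixed interval $[0,1]$ and works with the operator $L(s)=-\tfrac{d^2}{dx^2}+s^2V(sx)-s^2\lambda$ (so the trace inequality is on a fixed domain and the boundary Robin terms pick up a helpful factor of $s\le 1$), whereas you stay on the variable interval $[0,s]$ and instead make the trace constant depend on $s_0$; both routes hinge on exactly the same point, namely $s\ge s_0>0$.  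One small remark: your Step~3 sketch as written yields coefficient $2$ (not $\epsilon$) in front of $\int_0^s|y'|^2$; to get an arbitrary $\epsilon$ you should average over the shorter interval $[0,\delta]$ with $\delta=\min(\epsilon/2,s_0)$ rather than over all of $[0,s]$, which is presumably what you meant by ``rescaling.''
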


\begin{proof}
It is enough to show that for each $s_0 \in(0,1]$ there exists a positive 
$\lambda_{\infty}=\lambda_{\infty}(s_0)$ such that $0\not\in\Sp(L(s))$ 
for any $s\in[s_0,1]$ and $\lambda\in(-\infty,-\lambda_{\infty}]$. 
In fact, we will show  that the operator $L(s)$ is positive-definite  
for any $s\in[s_0,1]$ and $\lambda\in(-\infty,-\lambda_{\infty}]$.

Fix $s_0 \in(0,1]$, and let $u\in\dom (L(s))$. We take an inner product 
(in $L^2 (0,1)$) of $L(s) u$ with $u$ and integrate by parts: 
\begin{equation}
\langle L(s)u,u\rangle_{L^2(0,1)} = 
\|u'\|^2_{L^2(0,1)} + s^2\langle (V(s x)-\lambda)u,u\rangle_{L^2(0,1)}\\
-(u(1), u'(1))_{\mathbb{R}^{n}} + (u(0), u'(0))_{\mathbb{R}^{n}}. 
\lb{dfnlGt6.2}
\end{equation}
For the boundary terms, we follow a calculation from p. 21 of 
\cite{BK}, and write 
\begin{equation*}
\begin{aligned}
(u(1), u'(1))_{\mathbb{R}^n} &= ((P_{D_1} + P_{N_1} + P_{R_1})u(1), u'(1))_{\mathbb{R}^n} \\
&= (P_{D_1} u(1), u'(1))_{\mathbb{R}^n} + (P_{N_1} u(1), u'(1))_{\mathbb{R}^n}
+ (P_{R_1} u(1), u'(1))_{\mathbb{R}^n} \\
&= 
(u(1), P_{N_1} u'(1))_{\mathbb{R}^n} + (P_{R_1}^2 u(1), u'(1))_{\mathbb{R}^n} \\
&= (P_{R_1} u(1), P_{R_1} u'(1))_{\mathbb{R}^n} = 
(P_{R_1} u(1), s \Lambda_1 P_{R_1} u(1))_{\mathbb{R}^n} \\
&= 
s (P_{R_1} \Lambda_1 P_{R_1} u(1), u(1))_{\mathbb{R}^n}.
\end{aligned}
\end{equation*}

Proceeding similarly for $(u(0), u'(0))_{\mathbb{R}^n}$ we see that 
\begin{equation*}
\begin{aligned}
- (u(1),& u'(1))_{\mathbb{R}^n} + (u(0), u'(0))_{\mathbb{R}^n}
= -s (P_{R_1} \Lambda_1 P_{R_1} u(1), u(1))_{\mathbb{R}^n}
+ s (P_{R_0} \Lambda_0 P_{R_0} u(0), u(0))_{\mathbb{R}^n} \\
&=
- s (\mathcal{P} \gamma_D u, \gamma_D u)_{\mathbb{R}^{2n}},
\end{aligned}
\end{equation*} 
where 
\begin{equation*}
\mathcal{P} = \begin{pmatrix}
- P_{R_0} \Gamma_0 P_{R_0} & 0 \\
0 & P_{R_1} \Gamma_1 P_{R_1}
\end{pmatrix},
\end{equation*}
and $\gamma_D$ will denote the Dirichlet trace 
$\gamma_D u = {u(0) \choose u(1)}$.

Let $c_B > 0$ be large enough so that 
\begin{equation*}
|(\mathcal{P} \gamma_D u, \gamma_D u)_{\mathbb{R}^{2n}}|
\le c_B \|\gamma_D u\|_{\mathbb{R}^2n}^2,
\end{equation*}
and also notice that given any $\epsilon > 0$ 
there is a corresponding $\beta (\epsilon)$ so that 
\begin{equation*}
\|\gamma_D u\|_{\mathbb{R}^{2n}}^2 \le
\epsilon \|u'\|_{L^2 (0,1)}^2 + \beta (\epsilon) \|u\|_{L^2 (0,1)}^2.
\end{equation*}
(See, e.g., \cite{BK} Lemma 1.3.8.)
In this way, we see that 
\begin{equation*}
\begin{aligned}
-s c_B \|\gamma_D u\|_{\mathbb{R}^n}^2 &\ge 
-s c_B \Big(\beta (\epsilon) \|u\|_{L^2 (0,1)}^2 + \epsilon \|u'\|_{L^2 (0,1)}^2 \Big) \\
&\ge 
- c_B \Big(\beta (\epsilon) \|u\|_{L^2 (0,1)}^2 + \epsilon \|u'\|_{L^2 (0,1)}^2 \Big),
\end{aligned}
\end{equation*}
where the second inequality uses $s \in (0,1]$.

Choose $\epsilon > 0$ small enough so that $c_B \epsilon < 1$ and set 
\begin{equation*}
\lambda_{\infty} := \|V\|_{L^{\infty} (0,1)} + (1 + c_B \beta (\epsilon)) s_0^{-2}. 
\end{equation*}
Then, 
\begin{equation*}
\begin{aligned}
s^2 \langle (V(sx) - \lambda) u, u \rangle_{L^2 (0,1)}
&= s^2 \Big(\langle V(sx) u, u \rangle_{L^2 (0,1)} - \lambda \|u\|_{L^2 (0,1)}^2 \Big) \\
&\ge
s^2 \Big(-\|V\|_{L^{\infty} (0,1)} + \lambda_{\infty} \Big) \|u\|_{L^2 (0,1)}^2.
\end{aligned}
\end{equation*}
Combining these observations, we find 
\begin{equation*}
\begin{aligned}
\langle L(s) u, u \rangle_{L^2 (0,1)} &\ge
\|u'\|_{L^2 (0,1)}^2 + (- s^2 \|V\|_{L^{\infty} (0,1)} + s^2 \lambda_{\infty}) \|u\|_{L^2 (0,1)}^2 \\
&- c_B \epsilon \|u'\|_{L^2 (0,1)}^2 - c_B \beta (\epsilon) \|u\|_{L^2 (0,1)}^2 \\
&= (1 - c_B \epsilon) \|u'\|_{L^2 (0,1)}^2 
+ \Big(s^2 \lambda_{\infty} - c_B \beta (\epsilon) - s^2 \|V\|_{L^{\infty} (0,1)} \Big) \|u\|_{L^2 (0,1)}^2. 
\end{aligned}
\end{equation*}
We have 
\begin{equation*}
\begin{aligned}
s^2 \lambda_{\infty} &- c_B \beta (\epsilon) - s^2 \|V\|_{L^{\infty} (0,1)}
= s^2 \|V\|_{L^{\infty} (0,1)} + \frac{s^2}{s_0^2} (1 + c_B \beta (\epsilon))
- c_B \beta(\epsilon) - s^2 \|V\|_{L^{\infty} (0,1)} \\
&= \frac{s^2}{s_0^2} (1 + c_B \beta (\epsilon)) - c_B \beta(\epsilon) \ge 1,
\end{aligned}
\end{equation*}
where in obtaining the final inequality we've observed $s > s_0 > 0$. 

We conclude that 
\begin{equation*}
\langle L(s) u, u \rangle_{L^2 (0,1)} \ge (1-c_B \epsilon) \|u'\|_{L^2 (0,1)}^2
+ \|u\|_{L^2 (0,1)}^2,
\end{equation*}
from which we see that for $\lambda \le - \lambda_{\infty}$, $L(s)$ is positive
definite.
\end{proof}

\subsection{Crossings on $\Gamma_1$. Asymptotic expansions as $s\to0$}

Our goal in this section is to show that the Maslov index along 
$\Gamma_1$ can be expressed as 
\begin{equation*}
\Mas (\ell, \ell_1; \Gamma_1) = - \Mor (H(s_0)) 
= - \Mor (B) - \Mor (Q (V(0)- (P_{R_0} \Lambda_0 P_{R_0})^2) Q),
\end{equation*}
where $B$ and $Q$ are as in Theorem \ref{main}. For this discussion, 
we work with the operator $H(s)$, defined in \ref{ev_equation_u},
and with the domain 
\begin{equation*}
\dom(H(s)) = 
\big\{u\in H^2(0,1): \alpha_1 u(0)+\frac{1}{s} \alpha_2 u'(0) = 0; \,
\beta_1  u(1)+\frac{1}{s} \beta_2 u'(1) = 0\big\}.
\end{equation*}
Notice that $H(0) = -\frac{d^2}{dx^2}$, with 
\begin{equation*}
\dom(H(0)) = \big\{u\in H^2(0,1): P_{D_i} u(i) = 0, \,
P_{N_i} u'(i) = 0, \, P_{R_i} u'(i) = 0, \, i=0,1 \big\}.
\end{equation*}

If---as in the Dirichlet case---$H(0)$ does not have zero as an eigenvalue, 
then there cannot be any crossings along $\Gamma_1$. On the other hand, 
if zero is an eigenvalue of $H(0)$---as, for example, in the Neumann-based 
cases---there will be an associated family of eigenvalues of $H(s)$ for small 
$s$. Our ultimate goal is an asymptotic formula for the eigenvalues of $H(s)$ 
that bifurcate from a zero eigenvalue of $H(0)$ as $s\to0$. As a start, 
we characterize the eigenspace associated with the zero eigenvalue.

\begin{lemma} \label{zero_eigenspace} For $H(0)$ as defined above, 
zero is an eigenvalue of $H(0)$ if and only if 
$(\ker P_{D_0}) \cap (\ker P_{D_1}) \ne \{0\}$. Moreover, if 
zero is an eigenvalue of $H(0)$ then the eigenspace associated
with zero is precisely the set of constant vectors characterized
by this intersection. I.e.,  
\begin{equation*}
\ker H(0) = (\ker P_{D_0}) \cap (\ker P_{D_1}).
\end{equation*}
\end{lemma}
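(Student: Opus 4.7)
The plan is to use the fact that $\ker H(0)$ consists of affine vector-valued functions and to show, by exploiting the mutual orthogonality of $P_D$, $P_N$, $P_R$, that only the constant affine functions survive the boundary conditions.

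First I would write any $u \in \ker H(0)$ in the form $u(x) = a + bx$ with $a, b \in \mathbb{R}^n$, since $H(0) u = -u''$ and the eigenvalue equation is $u'' = 0$. Then I would translate each of the six boundary conditions in $\dom(H(0))$ into an algebraic condition on $a$ and $b$: at $x = 0$ these read $P_{D_0} a = 0$, $P_{N_0} b = 0$, $P_{R_0} b = 0$, and at $x = 1$ they read $P_{D_1}(a+b) = 0$, $P_{N_1} b = 0$, $P_{R_1} b = 0$.

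Next, using the decomposition $I = P_{D_i} + P_{N_i} + P_{R_i}$ from the theorem of \cite{BK}, the conditions $P_{N_i} b = 0$ and $P_{R_i} b = 0$ give $b = P_{D_i} b$ for $i = 0, 1$, so $b$ lies in $\operatorname{ran} P_{D_0} \cap \operatorname{ran} P_{D_1}$. Combining $b \in \operatorname{ran} P_{D_1}$ with $P_{D_1}(a+b) = 0$ yields $P_{D_1} a = -b$. The key trick is now to observe that $a \in \ker P_{D_0}$ and $b \in \operatorname{ran} P_{D_0}$ are orthogonal in $\mathbb{R}^n$ because $P_{D_0}$ is an orthogonal projection, hence
\begin{equation*}
0 = (a, b)_{\mathbb{R}^n} = -(a, P_{D_1} a)_{\mathbb{R}^n} = -\|P_{D_1} a\|_{\mathbb{R}^n}^2.
\end{equation*}
This forces $P_{D_1} a = 0$, and therefore $b = -P_{D_1} a = 0$. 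So $u(x) \equiv a$ with $a \in (\ker P_{D_0}) \cap (\ker P_{D_1})$.

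For the converse, I would note that any constant $u \equiv a$ with $a \in (\ker P_{D_0}) \cap (\ker P_{D_1})$ automatically satisfies $u'' = 0$ and all six boundary conditions, since $u'(i) = 0$ trivially. This shows $\ker H(0) = (\ker P_{D_0}) \cap (\ker P_{D_1})$, viewed as the space of constant vector-valued functions, and hence that $0$ is an eigenvalue of $H(0)$ precisely when this intersection is nontrivial. The only subtle step is the orthogonality argument that rules out a nonzero linear part $b$; everything else is bookkeeping with the projection decomposition $I = P_D + P_N + P_R$.
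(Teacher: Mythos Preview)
Your proof is correct and follows essentially the same approach as the paper's: write $u$ as an affine function, use the decomposition $I = P_{D_i} + P_{N_i} + P_{R_i}$ to force the slope into $\operatorname{ran} P_{D_0} \cap \operatorname{ran} P_{D_1}$, and then kill the slope via an orthogonality argument. The only cosmetic differences are that the paper swaps the roles of your $a$ and $b$ (writing $u(x) = ax + b$) and finishes by computing the two inner products $(a,b) = 0$ and $(a,a+b) = 0$ to conclude $|a|^2 = 0$, whereas you combine $b = -P_{D_1} a$ with $(a,b) = 0$ directly to obtain $\|P_{D_1} a\|^2 = 0$; these are equivalent routes to the same conclusion.
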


\begin{proof} It's clear that solutions of $H(0) = 0$ have 
the form 
\begin{equation*}
u(x) = ax + b, \quad a, b \in \mathbb{R}^n.
\end{equation*}
According to our boundary conditions, we have $P_{D_0} b = 0$,
$P_{N_0} a = 0$, $P_{R_0} a = 0$, 
$P_{D_1} (a+b) = 0$, $P_{N_1} a = 0$, and $P_{R_1} a = 0$.
Since $P_{D_0} + P_{N_0} + P_{R_0} = I$ (and similarly 
for the right boundary condition), we see that 
$(I - P_{D_0}) a = 0$ and $(I - P_{D_1}) a = 0$.

We have, then, 
\begin{equation*}
(a,b)_{\mathbb{R}^n} = (a,P_{D_0} b + (I-P_{D_0})b)_{\mathbb{R}^n}
= (a, (I-P_{D_0})b)_{\mathbb{R}^n}
= ((I-P_{D_0}) a, b)_{\mathbb{R}^n} = 0,
\end{equation*}  
and similarly $(a, a+b)_{\mathbb{R}^n} = 0$. It follows immediately
that $|a|^2 = 0$, so that $a = 0$ and $u(x) = b$. Finally, we
see that since $a = 0$ we must have both $P_{D_0} b = 0$ and 
$P_{D_1} b = 0$, and also that if these conditions are satisfied 
for $b \ne 0$ then zero is certainly an eigenvalue of $H(0)$. 
\end{proof}

\begin{remark} \label{intersection}
In what follows, we generally won't introduce any notation to distinguish
between $(\ker P_{D_0}) \cap (\ker P_{D_1})$ as a subspace of $\mathbb{C}^n$
or a subspace of $L^2 (0,1)$. We will denote the dimension of this 
intersection by $d$. I.e., $d = \dim [(\ker P_{D_0}) \cap (\ker P_{D_1})]$.
\end{remark}

Now, Consider the sesquilinear form ${h}(s)$ on $L^2(0,1)$, 
defined for $s\in[0,1]$ by 
\begin{align}
{h}(s)(u,v) &= 
\langle u',v'\rangle_{L^2(0,1)}+s^2\langle V(s x)u,v\rangle_{L^2(0,1)}
- s (\mathcal{P} \gamma_D u, \gamma_D u)_{\mathbb{C}^{2n}},
\no\\
\dom({h}(s)) &= \{(u,v) \in H^1(0,1)\times H^1(0,1): 
P_{D_i} u(i) = 0, P_{D_i} v(i) = 0\}, \lb{dfnlGt}
\end{align}
where $\gamma_D$ is defined in the proof of Lemma \ref{lem:Gamma4}. 
(See Theorem 1.4.11 in \cite{BK} for a discussion of why $h(s)$ with
the domain specified here is that natural quadratic form to associate
with $H(s)$.)

\begin{remark} Notice that at this point we begin working with 
complex inner products in anticipation of employing complex
analytic tools, including especially Riesz projections. We keep
in mind that even though complex-valued functions and vectors 
are now allowed, all inner products will ultimately be evaluated
at real-valued functions and vectors. 
\end{remark}

Following the general discussion of holomorphic families of closed, 
unbounded operators in \cite[Section VII.1.2]{Kato}, we introduce our 
next definition.

\begin{definition}\label{cont}
A family of closed, not necessarily bounded, operators $\{T(s)\}_{s\in \Sigma}$ on a Hilbert space $\cX$ 
is said to be continuous on an interval $\Sigma_0 \subset \Sigma$ if there exists 
a Hilbert space $\cX'$ and continuous families of operators $\{U(s)\}_{s\in \Sigma_0}$ 
and $\{W(s)\}_{s\in \Sigma_0}$ in $\cB(\cX',\cX)$ such that $U(s)$ is a one-to-one 
map of $\cX'$ onto $\dom(T(s))$ and the identity $T(s)U(s)=W(s)$ holds for all $s\in \Sigma_0$.
\end{definition}

Before applying this definition, we recall that the continuity of $V$ implies 
\begin{equation}\label{unif1}
\sup_{x\in[0,1]}\|V(s x)-V(s_0x)\|_{\bbR^{n\times n}}\to 0 \,\,\,\hbox{as}\,\,\, s\to s_0\,\text{ 
for any $s_0\in[0,1]$}.
\end{equation}

\begin{lemma}
Assume $V \in C([0, 1])$ is a symmetric matrix in $\mathbb{R}^{n \times n}$. 
Then the family $\{H(s)\}_{s\in [0,1]}$ is continuous near $0$; 
that is, on some interval $\Sigma_0$ that contains $0$.
\end{lemma}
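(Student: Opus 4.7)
The plan is to apply Definition~\ref{cont} with $\cX = L^2(0,1)$ and $\cX' = \dom(H(0))$ equipped with the graph norm of $H(0)$ (equivalent to the $H^2(0,1)$-norm on $\dom(H(0))$). I must produce an interval $\Sigma_0 \ni 0$ together with norm-continuous families $\{U(s)\}$ and $\{W(s)\}$ in $\cB(\cX',\cX)$ such that each $U(s)$ is a bijection of $\cX'$ onto $\dom(H(s))$ and $H(s) U(s) = W(s)$. The structural observation that guides the construction is that, once the boundary conditions of $H(s)$ are rewritten in the projection form of Remark~\ref{lambda_remark}, only the Robin relations $P_{R_i} u'(i) = s\Lambda_i P_{R_i} u(i)$ depend on $s$; the Dirichlet and Neumann conditions are $s$-independent and already coincide with those defining $\dom(H(0))$. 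Accordingly, I will build $U(s)$ by adding to $v \in \dom(H(0))$ a small boundary correction that shifts $P_{R_i} u'(i)$ by exactly the required amount while leaving the Dirichlet and Neumann conditions intact.

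Fix cut-off functions $\psi_0,\psi_1 \in C^2([0,1])$ with
\begin{equation*}
\psi_0(0) = \psi_0(1) = \psi_0'(1) = 0, \quad \psi_0'(0) = 1, \quad
\psi_1(0) = \psi_1(1) = \psi_1'(0) = 0, \quad \psi_1'(1) = 1,
\end{equation*}
and define
\begin{equation*}
U(s) v := v + s\psi_0(x)\, P_{R_0}\Lambda_0 P_{R_0} v(0) + s\psi_1(x)\, P_{R_1}\Lambda_1 P_{R_1} v(1).
\end{equation*}
A projection-by-projection check, using the orthogonality of $P_{D_i}, P_{N_i}, P_{R_i}$ together with the invariance of $P_{R_i}\mathbb{R}^n$ under $\Lambda_i$ (so that $P_{R_i}\Lambda_i P_{R_i} = \Lambda_i P_{R_i}$), will show that $u := U(s) v$ satisfies $P_{D_i} u(i) = 0$, $P_{N_i} u'(i) = 0$, and $P_{R_i} u'(i) = s\Lambda_i P_{R_i} u(i)$; that is, $u \in \dom(H(s))$. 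For surjectivity I observe that since $\psi_0,\psi_1$ vanish at both endpoints, any $u \in \dom(H(s))$ has $u(0) = v(0)$ and $u(1) = v(1)$ for
\begin{equation*}
v := u - s\psi_0\, P_{R_0}\Lambda_0 P_{R_0} u(0) - s\psi_1\, P_{R_1}\Lambda_1 P_{R_1} u(1);
\end{equation*}
running the same projection computation in reverse places $v$ in $\dom(H(0))$ and yields $U(s) v = u$. This formula is a two-sided inverse, so $U(s)$ is a bijection for every $s \in [0,1]$.

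Norm-continuity of $s \mapsto U(s)$ in $\cB(\cX',\cX)$ is immediate, since $U(s) - U(s_0)$ equals $s - s_0$ times the fixed bounded operator $v \mapsto \psi_0\, P_{R_0}\Lambda_0 P_{R_0} v(0) + \psi_1\, P_{R_1}\Lambda_1 P_{R_1} v(1)$, and the trace maps $v \mapsto v(0), v(1)$ are bounded from $H^2(0,1)$ to $\mathbb{R}^n$. For the companion family
\begin{equation*}
W(s) v = -v'' - s\bigl(\psi_0''\, P_{R_0}\Lambda_0 P_{R_0} v(0) + \psi_1''\, P_{R_1}\Lambda_1 P_{R_1} v(1)\bigr) + s^2 V(sx)\, U(s) v,
\end{equation*}
norm-continuity of $s \mapsto W(s)$ follows from the continuity of $U(s)$ together with the uniform convergence $\sup_{x \in [0,1]} \|s^2 V(sx) - s_0^2 V(s_0 x)\| \to 0$ given by \eqref{unif1}. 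The one genuine obstacle is careful projection bookkeeping: each of $P_{D_i}, P_{N_i}, P_{R_i}$ must be checked against both $v(i)$ and $v'(i)$ and against each cut-off. But the orthogonal decomposition $P_{D_i} + P_{N_i} + P_{R_i} = I$ together with the invariance of $P_{R_i}\mathbb{R}^n$ under $\Lambda_i$ reduces this to routine algebra, and once it is carried out the scheme works on the full interval $[0,1]$, which certainly contains a neighborhood $\Sigma_0$ of $0$.
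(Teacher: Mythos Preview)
Your proof is correct, and it takes a genuinely different route from the paper's. The paper sets $\cX' = L^2(0,1)$ and chooses $U(s) = (H(s)+I)^{-1}$, then establishes continuity of this resolvent by passing through the sesquilinear forms $h(s)$: it introduces $G = (H(0)+I)^{1/2}$, defines bounded operators $\widetilde{H}(s)$ via $\widetilde{h}(s)(u,v) = h(s)(G^{-1}u, G^{-1}v)$, shows $\|\widetilde{H}(s) - \widetilde{H}(s_0)\| \to 0$, and deduces $(H(s)+I)^{-1} = G^{-1}(\widetilde{H}(s) + G^{-2})^{-1} G^{-1}$ is continuous near $s=0$. Your approach is instead to take $\cX' = \dom(H(0))$ with the graph norm and build $U(s)$ as an explicit additive boundary corrector using cut-off functions $\psi_0, \psi_1$ that adjust only the Robin part of the boundary data. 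This is more elementary and concrete: it exploits the one-dimensional setting, where boundary traces are finite-dimensional and such correctors are trivial to write down, and it delivers the result on all of $[0,1]$ rather than only near $0$. The paper's form-based argument is the one that generalizes naturally to the multidimensional Schr\"odinger operators treated in \cite{CJLS2014}, where explicit boundary correctors are not so readily available; in the present ODE setting your direct construction is arguably cleaner.
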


\begin{proof}
We notice that formally we can write 
\begin{equation} \label{beginning_and_end}
H(s)(H(s)+I)^{-1}=I-(H(s)+I)^{-1}.
\end{equation}
In this way it is sufficient to establish 
that $U(s):=(H(s)+I)^{-1}$ is a continuous family of operators.

First, we note that it's clear from our construction of ${h}$
that we have the identity
\begin{equation*}
{h} (s) (u,v) = \langle H(s) u, v \rangle_{L^2 (0,1)},
\end{equation*}
for all $s \in [0,1]$. In particular, we have 
\begin{equation*}
\langle (H(0) + I) u, u \rangle_{L^2 (0,1)}
= ({h} (0) + 1) (u, u) = 
\|u'\|_{L^2 (0,1)}^2 + \|u\|_{L^2 (0,1)}^2, 
\end{equation*}
where we have used the convenient operator 
notation $1 (u,u) = \|u\|_{L^2 (0,1)}^2$. It 
follows that the operator $(H(0) + I)$ is self-adjoint, invertible
and positive definite, with a well-defined square root,
which we denote 
\begin{equation*}
G := (H(0) + I)^{1/2}; \quad G: \dom (h(s)) \to L^2 (0,1).
\end{equation*} 
We notice that for any $u \in \dom (H(s))$ we have 
\begin{equation} \label{Gu_squared}
\begin{aligned}
\|Gu\|_{L^2 (0,1)}^2 &= \langle Gu, Gu \rangle_{L^2 (0,1)}
= \langle G^2u, u \rangle_{L^2 (0,1)} \\
&= \langle H(0)u + u, u \rangle_{L^2 (0,1)}
= \|u'\|_{L^2 (0,1)}^2 + \|u\|_{L^2 (0,1)}^2 = \|u\|_{H^1 (0,1)}^2,
\end{aligned}
\end{equation}
from which we conclude that $G$ is an (invertible) isometry. 

Now, take any $u,v\in L^2(0,1)$ such that $\|u\|_{L^2(0,1)},\|v\|_{L^2(0,1)}\leq1$,
and compute  
\begin{align}\label{unif}
\big|(\mathcal{P} \gamma_D G^{-1}u,\gamma_DG^{-1}v)_{\mathbb{C}^{2n}}\big|
&\leq
C_1 \|\gamma_D G^{-1}u\|_{\mathbb{C}^{2n}}\|\gamma_D G^{-1}v\|_{\mathbb{C}^{2n}} \\
&\le 
C_2 \|G^{-1} u\|_{H^1 (0,1)} \|G^{-1} v\|_{H^1 (0,1)}
= C_2 \|u\|_{L^2 (0,1)} \|v\|_{L^2 (0,1)} 
\le C_2, 
\end{align}
where we've used the observation from the proof of Lemma \ref{lem:Gamma4}
that $\gamma_D$ is bounded as a map from $H^1 (0,1)$ to $\mathbb{C}^{2n}$.

We introduce a new sesquilinear form
\begin{align}
&\widetilde{{h}}(s)(u,v)
:= {h} (s)\left(G^{-1}u,G^{-1}v \right),\\
&\dom(\widetilde{{h}}(s))=L^2(0,1)\times L^2(0,1).
\end{align}
From \eqref{Gu_squared} and \eqref{unif} it is easy to see that 
$\widetilde{{h}}(s)$ is bounded on  
$L^2(0,1)\times L^2(0,1)$. Let $\widetilde{H}(s)\in\cB(L^2(0,1))$ 
be the self-adjoint operator associated with $\widetilde{{h}}(s)$ 
by the First Representation Theorem \cite[Theorem VI.2.1]{Kato}. Then 
\begin{align}\lb{LLtil}
\langle \widetilde{H}&(s)u,v\rangle_{L^2(0,1)}
=\widetilde{{h}}(s)(u,v)\\
&={h}(s)\left(G^{-1}u,G^{-1}v\right) 
\,\text{ for all} \,u,v\in L^2(0,1).\no
\end{align}
Taking into account \eqref{unif} and \eqref{unif1}, we conclude that
\begin{equation*}
\langle\widetilde{H}(s)u,v\rangle_{L^2(0,1)}
\rightarrow\langle\widetilde{H}(s_0)u,v\rangle_{L^2(0,1)}\,\,\hbox{as}\,\,s\to s_0
\end{equation*}
uniformly with respect to $u$ and $v$ satisfying $\|u\|_{L^2(0,1)},\|v\|_{L^2(0,1)}\leq1$. 
Hence 
\begin{equation}
\|\widetilde{H}(s)-\widetilde{H}(s_0)\|_{\cB(L^2(0,1))}\rightarrow0\,\,\hbox{as}\,\,s\to s_0,
\end{equation}
which implies $\widetilde{H}(s)\in\cB(L^2(0,1))$ is a continuous family on $[0,1]$.

Replacing $u$ in \eqref{LLtil} by $Gu$ (and similarly for $v$), we conclude that
\[
{h}(s)(u,v)=\left<\widetilde{H}(s)Gu,Gv\right>_{L^2(0,1)}
\] 
for any $u,v\in \dom (h(s))$.
Therefore, cf.\ \cite[VII-(4.4), (4.5)]{Kato}, for all $u\in\dom(H(s))$
\beq\lb{simLL}
H(s)u=G\widetilde{H}(s)Gu,
\enq
when $G$ is viewed as an unbounded, self-adjoint operator on $L^2(0,1)$.
Adding $I$ to both sides, we find 
\begin{equation*}
H(s)+I = G\widetilde{H}(s) G + I 
= G \Big(\widetilde{H}(s)+G^{-2} \Big) G. 
\end{equation*}
Now, $H(0) + I = G^2$, so $G^2 = G(\tilde{H} (0) + G^{-2}) G$, 
giving $\widetilde{H} (0) + G^{-2} = I$. 
We've seen that $\tilde{H} (s) \in \mathcal{B} (L^2 (0,1))$ is 
a continuous family, and since $\tilde{H} (0) + G^{-2} = I$ it follows
that $\widetilde{H} (s) + G^{-2}$ is boundedly invertible for $s$ 
near 0. We conclude that near $s = 0$ 
\begin{equation*}
(H(s)+I)^{-1}=G^{-1}\Big(\widetilde{H}(s)+G^{-2}\Big)^{-1}G^{-1}.
\end{equation*}
Thus $U(s)=(H(s)+I)^{-1}$ and $W(s)=I-U(s)$ are both continuous families
near $s = 0$, and it is now clear that 
\begin{equation}\label{UV}
H(s)U(s)=W(s)\,\text{ for $s$ near $0$}.
\end{equation} 
Hence, (\ref{beginning_and_end}) is justified, and 
according to Definition \ref{cont} the family $\{H(s)\}$ is continuous near $0$. 
\end{proof}

For $\zeta \in \mathbb{C} \setminus \sigma(H(s))$, we denote the 
resolvent 
\[
R(\zeta,s)=\big(H(s)-\zeta I\big)^{-1} \in \cB(L^2(0,1)).
\]

\begin{lemma}\label{rescont}
Let $\zeta \in \bbC \setminus \sigma(H(0))$. 
Then $\zeta \in \bbC \setminus \sigma (H(s))$ 
for $s$ near $0$. Moreover, the function 
$s \mapsto R(\zeta,s)$ is continuous for 
$s$ near $0$, uniformly for $\zeta$ in compact 
subsets of $\bbC \setminus \sigma (H(s))$.
\end{lemma}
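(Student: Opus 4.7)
The plan is to reduce the claim about $R(\zeta,s)$ to the continuity of the bounded family $U(s) = (H(s)+I)^{-1}$, which has already been established in the paragraph preceding the statement. The key algebraic observation is the factorization
\begin{equation*}
H(s) - \zeta I = (H(s) + I) - (\zeta + 1)I = (H(s)+I)\bigl[I - (\zeta+1)(H(s)+I)^{-1}\bigr],
\end{equation*}
which, for invertible right-hand factor, yields
\begin{equation*}
R(\zeta,s) = \bigl[I - (\zeta+1)U(s)\bigr]^{-1} U(s).
\end{equation*}
So the whole argument is reduced to showing that the bounded operator $T(\zeta,s) := I - (\zeta+1)U(s)$ is invertible for $s$ close to $0$ (uniformly for $\zeta$ in a compact set $K \subset \bbC\setminus\sigma(H(0))$), with an inverse that depends continuously on $s$.

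First I would verify invertibility at $s = 0$. By the very same factorization,
\begin{equation*}
T(\zeta,0) = I - (\zeta+1)(H(0)+I)^{-1} = (H(0)+I)^{-1}\bigl(H(0)-\zeta I\bigr),
\end{equation*}
which is a bounded composition of two boundedly invertible operators whenever $\zeta \notin \sigma(H(0))$. Hence $T(\zeta,0)^{-1} \in \cB(L^2(0,1))$, and its norm $\|T(\zeta,0)^{-1}\|$ is a continuous function of $\zeta$ on $\bbC\setminus\sigma(H(0))$, so it is bounded by some constant $C_K$ on the compact set $K$.

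Next I would perturb via a Neumann series. Writing $T(\zeta,s) = T(\zeta,0)\bigl[I + T(\zeta,0)^{-1}(\zeta+1)(U(0)-U(s))\bigr]$, we have
\begin{equation*}
\bigl\| T(\zeta,0)^{-1}(\zeta+1)(U(0)-U(s)) \bigr\|_{\cB(L^2(0,1))}
\le C_K \cdot \sup_{\zeta\in K}|\zeta+1| \cdot \|U(s)-U(0)\|_{\cB(L^2(0,1))}.
\end{equation*}
Since $\|U(s)-U(0)\|\to 0$ as $s\to 0$ by the preceding lemma, the right-hand side is less than $1/2$, say, for all $s$ in some neighborhood $\Sigma_0$ of $0$ (uniformly in $\zeta \in K$). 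A standard geometric series estimate then shows that $T(\zeta,s)^{-1}$ exists on $\Sigma_0$, is uniformly bounded in $(\zeta,s) \in K \times \Sigma_0$, and depends continuously on $s$ (uniformly in $\zeta \in K$). Combined with the continuity of $U(s)$, the formula $R(\zeta,s) = T(\zeta,s)^{-1}U(s)$ gives both conclusions of the lemma.

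I do not anticipate a genuine obstacle here; the only delicate point is keeping track of the uniformity in $\zeta$. That is handled by treating $\zeta$ as a parameter throughout and using compactness of $K$ to pass from pointwise bounds on $\|T(\zeta,0)^{-1}\|$ and $|\zeta+1|$ to a uniform bound, so that the Neumann series argument applies with a single choice of neighborhood $\Sigma_0$.
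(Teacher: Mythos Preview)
Your proposal is correct and follows essentially the same route as the paper. Your operator $T(\zeta,s)=I-(\zeta+1)U(s)$ is exactly the paper's $W(s)-\zeta U(s)$, and both arguments express $R(\zeta,s)$ as the product of $U(s)$ with $T(\zeta,s)^{-1}$; you spell out the Neumann-series step and the uniformity in $\zeta\in K$ that the paper leaves implicit in the phrase ``by continuity.'' (One small cosmetic point: your identity $T(\zeta,0)=(H(0)+I)^{-1}(H(0)-\zeta I)$ is only well defined on $\dom(H(0))$; the paper writes the equivalent bounded version $(H(0)-\zeta I)(H(0)+I)^{-1}$, which is defined on all of $L^2(0,1)$.)
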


\begin{proof}
Let $\zeta \in \bbC \setminus \sigma(H(0))$. Since 
$H(s) U(s) = W(s)$, we have (for $s$ near 0)
\begin{equation}\lb{eq1.26}
(H(s)-\zeta I) U(s) 
= W(s)-\zeta U(s). 
\end{equation}
The operator 
\begin{equation*}
W(0)-\zeta U(0)=(H(0)-\zeta I) U(0)
=(H(0)-\zeta I)(H(0)+I)^{-1}
\end{equation*}
is a bijection of $L^2(0,1)$ onto $L^2(0,1)$ (because
$H(0)+I$ and $H(0)-\zeta I$ are both boundedly 
invertible). By continuity, the operator $W(s)-\zeta U(s)$
is boundedly invertible for $s$ near $0$. 
This implies that $(H(s) - \zeta I) U(s)$ is boundedly
invertible with inverse $U(s)^{-1} (H(s) - \zeta I)^{-1}$.
In this way, we see that 
\begin{equation}
(H(s)-\zeta)^{-1}=U(s) (W(s)-\zeta U(s))^{-1},
\end{equation}
the product of two bounded operators. Hence, 
$\zeta \in \mathbb{C} \setminus \sigma (H(s))$,
and the function 
$s \mapsto R(\zeta,s)$ is continuous for $s$ near $0$ 
in the operator norm, uniformly in $\zeta$.
\end{proof}

Our next lemma gives an asymptotic result for the difference 
of the resolvents of the operators $H(s)$ and $H(0)$ as $s\to0$, 
which involves the value $V(0)$ of the potential at zero. We
observe at the outset that since $R(\zeta, 0)$ is a bounded
linear operator, it has a bounded linear adjoint (both on 
$L^2 (0,1)$). Consider the composite map 
$\gamma_D R(\zeta, 0)^*: L^2 (0,1) \to \mathbb{C}^{2n}$,
which for any $u \in L^2 (0,1)$, $z \in \mathbb{C}^{2n}$
satisfies 
\begin{equation} \label{adjoint_relation}
(z,\gamma_D R(\zeta, 0)^* u)_{\mathbb{C}^{2n}}
= \langle (\gamma_D R(\zeta, 0)^*)^* z, u  \rangle_{L^2 (0,1)}.
\end{equation}  

\begin{lemma} \label{RminusR}
If $\zeta \in \bbC \setminus \sigma (H(0))$ and $\|u\|_{L^2(0,1)}\le1$, then 
\begin{equation}\label{resolvent}
\begin{split}
R(\zeta, s)u - R(\zeta, 0)u 
&= s[\gaD   R(\zeta, 0)^*]^* \mathcal{P} \gaD R(\zeta, 0)u
- s^2R(\zeta, 0)V(0)R(\zeta, 0)u \\
&\quad+s^2[\gaD   R(\zeta, 0)^*]^* \mathcal{P} 
\gaD[\gaD   R(\zeta, 0)^*]^* \mathcal{P} \gaD  R(\zeta, 0)u+r(s),
\end{split}
\end{equation}
where $\|r(s)\|_{L^2(0,1)}=\mathrm{o}(s^{2})$ as $s\to0$, uniformly for $\zeta$ 
in compact subsets of $\bbC \setminus \sigma (H(0))$ and $\|u\|_{L^2(0,1)}\le1$.
\end{lemma}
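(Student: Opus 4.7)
The plan is to derive the expansion from the quadratic-form version of the second resolvent identity, exploiting the fact that $\dom(h(s)) = \{u \in H^1(0,1) : P_{D_i} u(i) = 0, \, i = 0, 1\}$ is independent of $s$. This sidesteps the obstruction that $\dom(H(s))$ itself varies with $s$. First, for arbitrary $u, v \in L^2(0,1)$, I would set $f = R(\zeta, s) u$ and $g = R(\bar\zeta, 0) v$ and apply the First Representation Theorem on each side to obtain
\[
h_0(f,g) - h_s(f,g) = \langle f, v\rangle_{L^2} - \langle u, g\rangle_{L^2} = \langle (R(\zeta, s) - R(\zeta, 0)) u, v\rangle_{L^2}.
\]
Inserting the explicit expression
\[
(h_0 - h_s)(f,g) = -s^2 \langle V(sx)f,g\rangle_{L^2} + s(\mathcal{P}\gaD f, \gaD g)_{\bbC^{2n}}
\]
and converting to $L^2$-operators via \eqref{adjoint_relation} yields the operator identity
\[
R(\zeta, s) - R(\zeta, 0) = -s^2 R(\zeta, 0) V(sx) R(\zeta, s) + s [\gaD R(\zeta, 0)^*]^* \mathcal{P} \gaD R(\zeta, s).
\]

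Next, I would iterate by substituting $R(\zeta, s) = R(\zeta, 0) + (R(\zeta, s) - R(\zeta, 0))$ on the right-hand side and collecting terms. This produces the leading contributions $-s^2 R(\zeta, 0) V(sx) R(\zeta, 0) u$ and $s [\gaD R(\zeta, 0)^*]^* \mathcal{P} \gaD R(\zeta, 0) u$, together with two correction terms carrying an additional factor $R(\zeta, s) - R(\zeta, 0)$. The correction prefactored by $s^2 V$ is immediately of order $s^3$ in $L^2$-operator norm thanks to Lemma \ref{rescont}. The correction prefactored by $s\mathcal{P}$ requires one further iteration of the same identity, now applied to $\gaD(R(\zeta, s) - R(\zeta, 0))$, which cleanly extracts the double-scattering contribution $s^2 [\gaD R(\zeta, 0)^*]^* \mathcal{P} \gaD [\gaD R(\zeta, 0)^*]^* \mathcal{P} \gaD R(\zeta, 0) u$ plus an $O(s^3)$ residue. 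Finally, replacing $V(sx)$ by $V(0)$ in the leading $s^2$-term costs only $o(s^2)$ by the uniform convergence \eqref{unif1}. Assembling everything delivers \eqref{resolvent} with $\|r(s)\|_{L^2} = o(s^2)$, uniformly for $\|u\|_{L^2} \le 1$ and for $\zeta$ in compact subsets of $\bbC \setminus \sigma(H(0))$, where the uniformity in $\zeta$ is inherited from Lemma \ref{rescont} together with standard continuity of $R(\zeta, 0)$ in $\zeta$.

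The main technical obstacle is carrying out the second iteration: it requires an $O(s)$ bound on $\gaD (R(\zeta, s) - R(\zeta, 0)) u$ in $\bbC^{2n}$, whereas Lemma \ref{rescont} only supplies $L^2$-operator-norm continuity, which is not strong enough to invoke the $H^1 \to \bbC^{2n}$ trace. I would overcome this by reading the obstacle off the operator identity already in hand: its first summand lies in $\dom(H(0)) \subset H^2(0,1)$, and its second summand lies in the range of $[\gaD R(\zeta, 0)^*]^*$, which is a bounded map from $\bbC^{2n}$ into $H^1(0,1)$ (a standard fact verified via the Green's-function representation of $R(\zeta, 0)$). Applying the bounded trace $\gaD : H^1(0,1) \to \bbC^{2n}$ therefore yields $\|\gaD(R(\zeta, s) - R(\zeta, 0)) u\|_{\bbC^{2n}} = O(s)$, uniformly in $u$ and $\zeta$, which is precisely the bound needed to absorb the cross-term into the $o(s^2)$ remainder and close the argument.
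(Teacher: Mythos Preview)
Your proposal is correct and follows essentially the same route as the paper: derive the quadratic-form resolvent identity $R(\zeta,s)-R(\zeta,0)=-s^2 R(\zeta,0)V(sx)R(\zeta,s)+s[\gaD R(\zeta,0)^*]^*\mathcal{P}\gaD R(\zeta,s)$, iterate it once, replace $V(sx)$ by $V(0)$, and collect the remainder. Your explicit treatment of the trace bound $\gaD(R(\zeta,s)-R(\zeta,0))u=O(s)$ is in fact more careful than the paper's, which simply cites $\|w\|_{L^2}\to 0$ at that step; your observation that the identity itself places $w$ in $H^1$ with an explicit factor of $s$ is exactly what is needed there.
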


\begin{proof}
We recall that $\zeta \in \bbC \setminus \sigma(H(s))$ for $s$ near $0$ 
by Lemma \ref{rescont}, since $\zeta \in \bbC \setminus \sigma (H(0))$. 
For $\|u\|_{L^2(0,1)}\le1$, we set $w := R(\zeta, s)u - R(\zeta, 0)u$. 
Since $R(\zeta, 0): L^2 (0,1) \to \mathcal{D} (H(0))$, we see that 
for any $u \in L^2 (0,1)$, $v \in H^1 (0,1)$ we have 
\begin{equation*}
{h} (0) (R(\zeta, 0)u, v) = \langle H(0) R(\zeta, 0) u, v \rangle_{L^2 (0,1)}, 
\end{equation*}
and likewise (since $H(0)$ is self-adjoint) 
\begin{equation*}
{h}(0) (v, R(\zeta, 0)^* u) = \langle v, H(0) R(\zeta, 0)^* u \rangle_{L^2 (0,1)}. 
\end{equation*}
In this way, we have 
\begin{equation*}
\begin{aligned}
({h} (0) - \zeta) (R(\zeta,0)u, v) 
&= \langle (H(0) - \zeta I) R(\zeta, 0) u, v \rangle_{L^2 (0,1)} \\
&= \langle u, v \rangle_{L^2 (0,1)},
\end{aligned}
\end{equation*}
and likewise 
\begin{equation*}
\begin{aligned}
({h}(0) - \zeta) (v, R(\zeta,0)^* u) 
&= \langle v, (H(0) - \bar{\zeta} I) R(\zeta, 0)^* u \rangle_{L^2 (0,1)} \\
&= \langle v, u \rangle_{L^2 (0,1)}.
\end{aligned}
\end{equation*}

We compute
\begin{equation*}
\begin{aligned}
\langle w,v\rangle_{L^2(0,1)} &= ({h} (0) - \zeta) (w,R(\zeta, 0)^*v)
=({h} (0)-\zeta) (R(\zeta, s)u - R(\zeta, 0)u, R(\zeta, 0)^*v)\\
&=({h} (0) - \zeta) (R(\zeta, s)u,R(\zeta, 0)^*v)
-({h} (0)-\zeta) (R(\zeta, 0)u,R(\zeta, 0)^*v).
\end{aligned}
\end{equation*}
At this stage, we notice that 
\begin{equation*}
{h} (s) (u,v) = {h} (0) (u,v) + 
s^2 \langle V(sx) u, v \rangle_{L^2 (0,1)} 
- s (\mathcal{P} \gamma_D u, \gamma_D v)_{\mathbb{C}^{2n}}.
\end{equation*}
Using this, we can write 
\begin{equation*}
\begin{aligned}
\langle w,v\rangle_{L^2(0,1)} 
&= ({h} (s) - \zeta) (R(\zeta, s)u,R(\zeta, 0)^*v)
-s^2 \langle V(sx) R(\zeta, s) u, R(\zeta, 0)^* v \rangle_{L^2 (0,1)} \\
&+ s (\mathcal{P} \gamma_D R(\zeta, s) u, \gamma_D R(\zeta, 0)^* v)_{\mathbb{C}^{2n}}
- ({h} (0) - \zeta) (R(\zeta, 0)u,R(\zeta, 0)^*v).
\end{aligned}
\end{equation*}
In this way, we obtain 
\begin{equation*}
\begin{aligned}
\langle w,v\rangle_{L^2(0,1)}
&= \langle (H(s) - \zeta I) R(\zeta, s)u, R(\zeta, 0)^* v\rangle_{L^2 (0,1)} 
- s^2 \langle V(sx) R(\zeta, s) u, R(\zeta, 0)^* v \rangle_{L^2 (0,1)} \\
&+ s (\mathcal{P} \gamma_D R(\zeta, s) u, \gamma_D R(\zeta, 0)^* v)_{\mathbb{C}^{2n}}
- \langle (H (0) - \zeta I) R(\zeta, 0)u, R(\zeta, 0)^*v \rangle_{L^2 (0,1)} \\
&=\langle u, R(\zeta, 0)^* v\rangle_{L^2 (0,1)} 
- s^2 \langle V(sx) R(\zeta, s) u, R(\zeta, 0)^* v \rangle_{L^2 (0,1)} \\
&+ s (\mathcal{P} \gamma_D R(\zeta, s) u, \gamma_D R(\zeta, 0)^* v)_{\mathbb{C}^{2n}}
- \langle u, R(\zeta, 0)^*v \rangle_{L^2 (0,1)} \\
&= - s^2 \langle V(sx) R(\zeta, s) u, R(\zeta, 0)^* v \rangle_{L^2 (0,1)} 
+ s (\mathcal{P} \gamma_D R(\zeta, s) u, \gamma_D R(\zeta, 0)^* v)_{\mathbb{C}^{2n}}. 
\end{aligned}
\end{equation*}

Using (\ref{adjoint_relation}), we find 
\begin{equation*}
\langle w,v\rangle_{L^2(0,1)} 
= - s^2 \langle R(\zeta, 0) V(sx) R(\zeta, s) u, v \rangle_{L^2 (0,1)} 
+ s ((\gamma_D R(\zeta, 0)^*)^* \mathcal{P} \gamma_D R(\zeta, s) u,  v)_{L^2 (0,1)}.
\end{equation*}
Since this is true for all $v \in L^2 (0,1)$, we have 
\begin{equation*}
w = -s^2 R(\zeta, 0) V(sx) R(\zeta, s) u 
+ s (\gamma_D R(\zeta, 0)^*)^* \mathcal{P} \gamma_D R(\zeta, s) u,
\end{equation*}
and recalling the definition of $w$, we arrive at  
\begin{equation} \label{formula}
R(\zeta, s)u = R(\zeta, 0)u - s^2 R(\zeta, 0) V(s x) R(\zeta, s)u 
+ s(\gaD R(\zeta, 0)^*)^* \mathcal{P} \gaD  R(\zeta, s)u.
\end{equation}

Replacing $R(\zeta, s)u$ in the right-hand  side of \eqref{formula} 
again by \eqref{formula} yields
\begin{equation} \label{fformula}
\begin{aligned} 
R(\zeta, s)u &= R(\zeta, 0)u
-s^2 R(\zeta, 0) V(s x) \Big( R(\zeta, 0)u - s^2 R(\zeta, 0)V(s x)R(\zeta, s)u \\
&\quad + s(\gaD R(\zeta, 0)^*)^* \mathcal{P} \gaD  R(\zeta, s)u \Big) \\
&+ s(\gaD R(\zeta, 0)^*)^* \mathcal{P} \gaD \Big( R(\zeta, 0)u - s^2 R(\zeta, 0) V(s x) R(\zeta, s)u \\
&\quad +s (\gaD R(\zeta, 0)^*)^* \mathcal{P} \gaD  R(\zeta, s) u\Big) \\
&= R(\zeta, 0)u + s(\gaD R(\zeta, 0)^*)^* \mathcal{P} \gaD R(\zeta, 0)u
-s^2 R(\zeta, 0) V(0) R(\zeta, 0)u \\
&\quad + s^2(\gaD R(\zeta, 0)^*)^* \mathcal{P} \gaD (\gaD R(\zeta, 0)^*)^* \mathcal{P} \gaD R(\zeta, 0)u+r(s),
\end{aligned}
\end{equation}
where
\begin{equation}
\begin{split}
r(s) = -&s^2 R(\zeta, 0) \big(V(s x)-V(0)\big) R(\zeta, 0)u\\
&+ s^2(\gaD R(\zeta, 0)^*)^* \mathcal{P} \gaD(\gaD R(\zeta, 0)^*)^* \mathcal{P} \gaD \big(R(\zeta, s)u-R(\zeta, 0)u\big) \\
&\qquad- s^3 R(\zeta, 0) V(s x) (\gaD R(\zeta, 0)^*)^* \mathcal{P} \gaD  R(\zeta, s)u \\
&\qquad\qquad-s^3 (\gaD R(\zeta, 0)^*)^* \mathcal{P} \gaD R(\zeta, 0) V(s x) R(\zeta, s)u \\
&\qquad\qquad\qquad+ s^4 R(\zeta, 0) V(s x) R(\zeta, 0) V(s x) R(\zeta, s)u.
\end{split}
\end{equation}
Finally, we remark that $\|w\|_{L^2(0,1)}\to0$ and $\|R(\zeta,s)\|_{\cB(L^2(0,1))}$ is bounded as $s\to0$ by Lemma \ref{rescont} 
and thus, using \eqref{unif1} for $s_0=0$, we conclude that $\|r(s)\|_{L^2(0,1)}=\mathrm{o}(s^{2})$ as $s\to0$, 
uniformly for $\zeta$ in compact subsets of $\bbC \setminus \sigma (H(s))$ and $\|u\|_{L^2(0,1)}\le1$.
\end{proof}

We've already noted that $H(0)$ may have $\lambda = 0$ as an eigenvalue 
(for example, in the Neumann-based case), and our next goal is to 
understand the corresponding family of eigenvalues $\{\lambda_j (s)\}$,
with $\lambda_j (0) = 0$. To begin, we will separate the spectrum of $H(s)$. 
First, we note that $0$ is the only possible nonpositive eigenvalue in 
$\sigma (H(0))$. We would like to appeal to the continuity of eigenvalues
with respect to $s$, but since $H(s)$ is unbounded we must take care with 
our argument. We will proceed by shifting the spectrum so that it lies 
entirely to the left of 0, and then inverting our operator to work with 
a resolvent (which will be bounded). 

We clarify that in contrast with the setting of Lemma \ref{lem:Gamma4},
we are concerned here with eigenvalues of $H(s)$ so that $H(s) u = \lambda u$
(i.e., the $s^2$ scaling from Lemma \ref{lem:Gamma4} does not appear on 
$\lambda$). Nonetheless, a calculation similar to the proof of 
Lemma \ref{lem:Gamma4} shows that any eigenvalue of $H (s)$ must 
satisfy 
\begin{equation*}
\lambda \ge - (\|V\|_{L^{\infty} (0,1)} + c_B \beta (\epsilon)),
\end{equation*}  
for constants $c_B$ and $\beta (\epsilon)$ that arise precisely
as in the proof of Lemma \ref{lem:Gamma4}. It's clear, then, that
there exists a value $\Lambda > 0$ sufficiently large so that 
$-\Lambda/2 < \lambda$ for all $\lambda \in \sigma (H(s))$
and $s \in [0,1]$.
By the spectral mapping theorem, we infer
\beq \lb{spmth}
\sigma \big((-\Lambda-H(s))^{-1}\big)\setminus\{0\}
= \big\{(-\Lambda-\lambda)^{-1}:
\lambda \in \sigma (H(s)) \big\}, \, s\in[0,1].
\enq
In particular, if $0 \in \sigma(H(0))$, 
then $-1/\Lambda \in \sigma \big((-\Lambda-H(0))^{-1}\big)$. 

Now fix a sufficiently small $\varepsilon \in (0,1/(2\Lambda))$ such that 
the disc of radius $2\varepsilon$ centered at the point $-1/\Lambda$  
does not contain any other eigenvalues in $\sigma \big((-\Lambda-H(0))^{-1}\big)$ 
except  $-1/\Lambda$. Using Lemma \ref{rescont} we know that  
$(-\Lambda-H(s))^{-1} \to (-\Lambda-H(0))^{-1}$ in $\cB(L^2(0,1))$ as $s\to0$. 
By the upper semicontinuity of the spectra of bounded operators, see, e.g., 
\cite[Theorem IV.3.1]{Kato}, there exists a $\delta = \delta(\varepsilon)$ 
such that if $s \in [0,\delta]$, then 
\beq\lb{spincl}
\sigma \big((-\Lambda-H(s))^{-1}\big) 
\subset \{\mu: \dist\big(\mu,\sigma \big((-\Lambda-H(0))^{-1}\big)\big)<\varepsilon\big\}.
\enq

In the remaining part of this section we take $s \le \delta$. 
Let $\{\nu_\ell (s)\}_{\ell=1}^{\tilde{n}} \subset \sigma \big((-\Lambda-H(s))^{-1}\big)$ 
denote the eigenvalues of $(-\Lambda-H(s))^{-1}$ which are located inside of the disc of 
radius $\varepsilon$ centered at the point $-1/\Lambda$, and let 
$\lambda_\ell(s)=-\Lambda-1/\nu_\ell(s)$ be the respective eigenvalues of $H(s)$.  
Let $\gamma$ be a small circle centered at zero which encloses the eigenvalues 
$\lambda_\ell(s)$ for all $\ell=1,\dots,\tilde{n}$ and $s\in[0,\delta]$ and separates them 
from the rest of the spectrum of $H(s)$. By choosing $\varepsilon$ sufficiently small, 
we can ensure that $\{\lambda_\ell(s)\}_{l=1}^{\tilde{n}}$ are precisely the 
eigenvalues bifurcating from $\lambda (0) = 0$, and also that $\gamma$ separates 
$0 \in \sigma (H(0))$ from the rest of the spectrum of $H(0)$.  

We denote by $P_0$ the orthogonal Riesz projection for $H(0)$ corresponding to the 
eigenvalue $0 \in \sigma (H(0))$, 
with $\ran(P_0)=\ker(H(0)) = (\ker P_{D_0}) \cap (\ker P_{D_1})$. 
(If $0 \notin \sigma (H(0))$ then $P_0 \equiv 0$.) 
Also, we let $\{P(s)\}_{s \in [0, \delta]}$ denote the family of 
Riesz spectral protections for $H(s)$ corresponding to the eigenvalues 
$\{\lambda_j (s)\}_{j=1}^d \subset \sigma (H(s))$, where $d$ denotes 
the dimension of the subspace $(\ker P_{D_0}) \cap (\ker P_{D_1})$.
That is, 
\beq\lb{dfnRPr}
P_0=\frac{1}{2\pi i}\int_\gamma(\zeta-H(0))^{-1}\,d\zeta,\,
P(s)=\frac{1}{2\pi i}\int_\gamma(\zeta-H(s))^{-1}\,d\zeta,
\enq
where $\gamma$ encloses the set $\{\lambda_j (s)\}_{j=1}^d$.

Our objective is to establish an asymptotic formula for the eigenvalues 
$\lambda_j (s)$ as $s\to0$ similar to \cite[Theorem II.5.11]{Kato}, 
which is valid for families of bounded operators on finite-dimensional 
spaces. We stress that one cannot directly use a related result 
\cite[Theorem VIII.2.9]{Kato} for families of unbounded operators, 
as the $s$-dependence of $H(s)$ in our case is more complicated than 
allowed in the latter theorem. We are thus forced to mimic the main 
strategy of \cite{Kato} in order to extend the relevant results to 
the family $\{H(s)\}_{s\in[0,\delta]}$. 

Keeping in mind that our main goal for $\Gamma_1$ is to count the 
number of negative eigenvalues of the operator $H(s)$ for $s$ near zero,
we next establish the following claim.

\begin{claim} \label{HpvP}
For $s \in [0, \delta]$, the number of negative eigenvalues of $H(s)$
is equivalent to the number of negative eigenvalues of $H(s) P(s)$; that
is, the restriction of $H(s)$ to the finite-dimensional subspace 
$\ran(P(s))$.
\end{claim}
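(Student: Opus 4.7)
The plan is to use the Riesz decomposition $L^2(0,1)=\ran P(s)\oplus\ran(I-P(s))$, which reduces the self-adjoint operator $H(s)$, and to show that for $s\in[0,\delta]$ every negative eigenvalue of $H(s)$ sits in $\ran P(s)$. Since the restriction of $H(s)$ to the finite-dimensional subspace $\ran P(s)$ is exactly $H(s)P(s)$, this will yield the claim.

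The first step is to establish a spectral gap for $H(0)$ above $0$. The form $h(0)(u,u)=\|u'\|_{L^2(0,1)}^2\geq 0$ shows that $\sigma(H(0))\subset[0,\infty)$, and by Lemma \ref{zero_eigenspace} the only possible non-positive eigenvalue is $0$. Because $H(0)$ is a self-adjoint Sturm--Liouville operator on $[0,1]$ with separated boundary conditions, it has compact resolvent, and so its spectrum is discrete and accumulates only at $+\infty$. In particular there exists $c>0$ with $\sigma(H(0))\cap(0,2c)=\varnothing$. I would then fix the radius of $\gamma$ strictly less than $c$, so that $\gamma$ encloses at most the eigenvalue $0$ of $H(0)$ and separates it from $\sigma(H(0))\cap[2c,\infty)$.

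Next I would transfer this gap to $H(s)$ using the resolvent-based upper semi-continuity already developed in \eqref{spmth}--\eqref{spincl}. Under the map $\lambda\mapsto(-\Lambda-\lambda)^{-1}$, the eigenvalues $\lambda\in\sigma(H(0))\cap[2c,\infty)$ correspond to points in $(-1/\Lambda,0)$ whose distance to $-1/\Lambda$ is bounded below by an explicit positive constant $c'=c'(c,\Lambda)$. Taking $\varepsilon<\min\{1/(2\Lambda),c'/2\}$ in \eqref{spincl}, and shrinking $\delta=\delta(\varepsilon)$ if necessary, one obtains
\[
\sigma(H(s))\setminus\{\lambda_j(s)\}_{j=1}^{d}\subset[c,\infty)\qquad\text{for all }s\in[0,\delta].
\]

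The final step uses that $H(s)$ is self-adjoint, so $P(s)$ is an orthogonal projection and the decomposition $L^2(0,1)=\ran P(s)\oplus\ran(I-P(s))$ reduces $H(s)$ (see, e.g., \cite[Sections III.6.4--5]{Kato}). Consequently,
\[
\sigma\bigl(H(s)|_{\ran P(s)}\bigr)=\{\lambda_j(s)\}_{j=1}^d,\qquad \sigma\bigl(H(s)|_{\ran(I-P(s))}\bigr)\subset[c,\infty),
\]
with multiplicities preserved in the direct sum. Every negative eigenvalue of $H(s)$ therefore belongs to $\sigma(H(s)|_{\ran P(s)})$, i.e.\ to the spectrum of the finite-rank operator $H(s)P(s)$, and counting with multiplicity gives $\mor(H(s))=\mor(H(s)P(s))$. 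The main potential obstacle is the uniform spectral-gap statement in the second step: it requires that the gap around $0$ for $H(0)$ not collapse as $s$ varies over $[0,\delta]$. Upper semi-continuity of the spectrum (via the resolvent continuity of Lemma \ref{rescont}) is exactly what delivers this uniformity, so beyond a careful choice of $\varepsilon$ no genuinely new estimate is required.
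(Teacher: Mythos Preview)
Your proof is correct and follows essentially the same approach as the paper: both arguments use that $0$ is the only nonpositive eigenvalue of $H(0)$, transfer a spectral gap to $H(s)$ via upper semicontinuity of the spectrum of the bounded resolvent $(-\Lambda-H(s))^{-1}$, and conclude that every negative eigenvalue of $H(s)$ lies among the $\lambda_j(s)$ enclosed by $\gamma$. The paper's version is terser, phrasing the conclusion directly in terms of the spectral-mapping correspondence $\lambda<0 \Leftrightarrow (-\Lambda-\lambda)^{-1}<-1/\Lambda$, while you spell out the Riesz decomposition and the self-adjointness of $P(s)$ explicitly; these are just two presentations of the same idea.
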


\begin{proof}
By the spectral mapping theorem \eqref{spmth}, $\lambda<0$ is in $\sigma (H(s))$ 
if and only if $(-\Lambda-\lambda)^{-1}<-1/\Lambda$. Thus for $s$ near zero 
the negative eigenvalues of $H(s)$ are in one-to-one correspondence with the 
eigenvalues $\nu_j(s) \in \sigma \big((-\Lambda-H(s))^{-1}\big)$ that satisfy 
the inequality $\nu_j(s)<-1/\Lambda$, and therefore with the negative eigenvalues 
among $\lambda_j(s) \in \sigma \big(H(s)P(s)\big)$ as claimed. 
\end{proof}

Next, we would like to work with a Neumann-type expansion for $R(\zeta, 0)$. 
From \cite[Section III.6.5]{Kato}, we can write
\begin{equation}\lb{decomp}
R(\zeta,0)=(-\zeta)^{-1} P_0 + \sum_{n=0}^\infty \zeta^n S^{n+1},
\end{equation}	
where
\begin{equation} \lb{decomp1}
S=\frac{1}{2\pi i}\int_\gamma\zeta^{-1}R(\zeta,0)\,d\zeta 
\end{equation}
is the {\it reduced resolvent} for the operator $H(0)$ in $L^2(0,1)$ (this uses
equations (III.6.32) and (III.6.33) in \cite{Kato}). Moreover, we have
from \cite{Kato} the useful relation $P_0 S = S P_0=0$. (We'll say much more 
about the nature of the reduced resolvent at the end of this section.)

We introduce the notation 
\begin{equation} \label{D_defined}
D(s) = P(s) - P_0 = - \frac{1}{2 \pi i} \int_{\gamma} R(\zeta, s) - R(\zeta, 0) d\zeta, 
\end{equation}
and it's clear from Lemma \ref{RminusR} that this is $\mathbf{O} (s)$. This implies
that $I - D(s)^2$ is strictly positive for $s$ near 0, and following \cite[Section I.4.6]{Kato}, 
we may introduce mutually inverse operators $U(s)$ and $U(s)^{-1}$ in $\cB(L^2(0,1))$ as follows:
\begin{equation}\label{dfnUUinv}
\begin{split}
U(s)& = (I-D^2(s))^{-1/2}\big((I-P(s))(I-P_0) + P(s)P_0\big),\\
U(s)^{-1}&=(I-D^2(s))^{-1/2}\big((I-P_0)(I-P(s))+P_0 P(s)\big),
\end{split}
\end{equation}
for which 
\begin{equation}\label{up}
U(s)P_0=P(s)U(s)
\end{equation}
(equation (I.4.42) in \cite{Kato}).
We see that $U(s)$ is an isomorphism of the $d$-dimensional subspace 
$\ran({P_0})$ onto the subspace $\ran({P(s)})$. 

We isolate the main technical steps of our perturbation analysis 
in the following lemma, for which the statement and proof have been 
adapted with only minor changes from \cite{CJLS2014}.

\begin{lemma} \lb{lem:simile} 
Let $P_0$ be the Riesz projection for $H(0)$ onto the subspace $\ran({P_0})=\ker(H(0))$ 
and $P(s)$ the respective Riesz projection for $H(s)$ from \eqref{dfnRPr}. 
Let $S$ be the reduced resolvent for $H(0)$ defined in \eqref{decomp1}, 
and let the transformation operators $U(s)$ and $U(s)^{-1}$ be defined in \eqref{dfnUUinv}. Then
\begin{align} \label{pulup}
\begin{split}
P_0 U(s)^{-1} &H(s) P(s) U(s) P_0
= - s (\gaD P_0)^* \mathcal{P} \gaD P_0 + s^2 P_0 V(0) P_0 \\
&\qquad- s^2(\gaD P_0)^* \mathcal{P} \gaD(\gaD S)^* \mathcal{P} \gaD P_0 
+ \mathrm{o}(s^2)
\text{ as $s\to0$}.
\end{split}
\end{align}
\end{lemma}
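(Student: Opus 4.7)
The plan is to express $H(s)P(s)$ as a contour integral and substitute the asymptotic expansion of Lemma \ref{RminusR}. Because $[H(s), P(s)] = 0$ and $H(s)R(\zeta, s) = \zeta R(\zeta, s) + I$ (the constant $I$ contributing no residue along $\gamma$),
\[
H(s)P(s) = -\frac{1}{2\pi i}\int_\gamma \zeta R(\zeta, s)\,d\zeta.
\]
Substituting \eqref{resolvent} together with the Neumann expansion \eqref{decomp} of $R(\zeta, 0)$, every contribution reduces to extracting the coefficient of $\zeta^{-2}$ in the integrand. A direct Laurent-coefficient count yields
\[
H(s)P(s) = -s(\gaD P_0)^*\mathcal{P}\gaD P_0 + s^2 P_0 V(0)P_0 - s^2 (T_a + T_b + T_c) + \mathrm{o}(s^2),
\]
where $T_a, T_b, T_c$ are the three residues coming from the triple product $[\gaD R^*]^*\mathcal{P}\gaD[\gaD R^*]^*\mathcal{P}\gaD R$: only $T_b = (\gaD P_0)^*\mathcal{P}\gaD(\gaD S)^*\mathcal{P}\gaD P_0$ places $S$ in the interior slot, while $T_a$ carries a $\gaD S$ at its right end and $T_c$ carries a $(\gaD S)^*$ at its left end.

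Next I would handle the $U(s)$-sandwich. From \eqref{dfnUUinv}, together with the commutation of $(I - D(s)^2)^{-1/2}$ with both $P_0$ and $P(s)$ (which follows from $[D^2, P_0]=0$), one obtains
\[
U(s)P_0 = (I - D^2)^{-1/2}P(s)P_0, \qquad P_0 U(s)^{-1} = (I - D^2)^{-1/2}P_0 P(s).
\]
Since $D(s) = P(s) - P_0 = \mathbf{O}(s)$ by \eqref{D_defined} and Lemma \ref{RminusR}, and $H(s)P(s) = \mathbf{O}(s)$, the prefactors $(I - D^2)^{-1/2} = I + \mathbf{O}(s^2)$ contribute only at order $\mathrm{o}(s^2)$. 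Writing $P_0 P(s) = P_0 + P_0 D$ and $P(s)P_0 = P_0 + D P_0$, the problem reduces to evaluating
\[
P_0(H(s)P(s))P_0 \;+\; P_0(H(s)P(s))D P_0 \;+\; P_0 D(H(s)P(s))P_0 \;+\; \mathbf{O}(s^3).
\]

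The main obstacle is to show that the two cross terms, together with $T_a$ and $T_c$, all vanish after projecting by $P_0$, leaving exactly the claimed formula. The structural input is $SP_0 = P_0 S = 0$ for the reduced resolvent (standard, as cited from \cite{Kato} after \eqref{decomp1}); its adjoint gives $P_0(\gaD S)^* = 0$, and combined with $\gaD S P_0 = 0$ this forces the two scalar identities $\gaD S(\gaD P_0)^* = 0$ and $\gaD P_0(\gaD S)^* = 0$ as maps on $\mathbb{C}^{2n}$. A parallel residue computation gives $P_0 D(s) = s(\gaD P_0)^*\mathcal{P}\gaD S + \mathbf{O}(s^2)$ and $D(s)P_0 = s(\gaD S)^*\mathcal{P}\gaD P_0 + \mathbf{O}(s^2)$; pairing either with the leading $-s(\gaD P_0)^*\mathcal{P}\gaD P_0$ term of $H(s)P(s)$ inserts either $\gaD S(\gaD P_0)^*$ or $\gaD P_0(\gaD S)^*$ between the outer traces, killing each cross term at order $s^2$. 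The same identities annihilate $P_0 T_a P_0$ (which has $\gaD S P_0 = 0$ at its right end) and $P_0 T_c P_0$ (which has $P_0(\gaD S)^* = 0$ at its left end), while $P_0 T_b P_0 = T_b$ because both of its outer factors are $(\gaD P_0)^*$ and $\gaD P_0$. Collecting the surviving pieces yields \eqref{pulup}; the care required is in tracking all five candidate $\mathbf{O}(s^2)$ contributions and verifying that the single structural identity $SP_0 = 0$ annihilates each one that should vanish.
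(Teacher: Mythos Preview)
Your argument is correct and takes a genuinely different route from the paper's proof. The paper follows the classical Kato scheme: it first derives a long list of intermediate asymptotics---for $R(\zeta,s)P_0$, $P_0 R(\zeta,s)$, $P(s)P_0$, $U(s)$, $P_0 U(s)P_0$, etc.\ (Steps~1--3)---then computes $P_0 U(s)^{-1}R(\zeta,s)U(s)P_0$ via the four-term decomposition $A_1+A_2+A_3+A_4$ obtained by inserting $P_0+(I-P_0)$ twice (Step~4), and only at the very end integrates $-\tfrac{1}{2\pi i}\int_\gamma \zeta(\cdot)\,d\zeta$ (Step~5). You invert this order: you integrate first, reducing everything to a Laurent-coefficient extraction that gives $H(s)P(s)$ directly, and then handle the $U$-sandwich by the clean identities $U(s)P_0=(I-D^2)^{-1/2}P(s)P_0$ and $P_0U(s)^{-1}=(I-D^2)^{-1/2}P_0P(s)$ (using $[D^2,P_0]=0$), which immediately reduce to $(P_0+P_0D)(H(s)P(s))(P_0+DP_0)$ up to $\mathrm{O}(s^3)$.

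Your approach is shorter and more transparent: once $H(s)P(s)$ is in hand, the single relation $SP_0=P_0S=0$ and its consequences $\gaD P_0(\gaD S)^*=0$, $\gaD S(\gaD P_0)^*=0$ kill the two cross terms and the spurious residues $T_a,T_c$ in one stroke, leaving exactly $T_b$. The paper's approach, by contrast, carries more intermediate machinery (the formulas of Steps~1--3 are potentially reusable elsewhere) and stays closer to the presentation in \cite{Kato}. Both routes ultimately hinge on the same structural fact $SP_0=0$; you simply organize the bookkeeping so that this fact is invoked at the end rather than threaded through a dozen intermediate identities.
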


\begin{proof} We will split the proof into several steps.

{\em Step 1.}\, We first claim the following four asymptotic relations for $\zeta \in \gamma$:
\begin{align}
\label{rp}
R(\zeta, s)P_0 &= (-\zeta)^{-1} P_0 + s (-\zeta)^{-1} (\gaD R(\zeta, 0)^*)^* \mathcal{P} \gaD P_0 + \mathrm{o}(s)_u,\\
P_0 R(\zeta, s)
  &= (-\zeta)^{-1} P_0 + s (-\zeta)^{-1}(\gaD P_0)^* \mathcal{P} \gaD R(\zeta, 0) + \mathrm{o}(s)_u, \label{eq1.63}\\
P_0 R(\zeta, s)P_0 &= (-\zeta)^{-1}P_0 + s (-\zeta)^{-2}(\gaD P_0)^* \mathcal{P} \gaD P_0
- s^2(-\zeta)^{-2} P_0 V(0) P_0 \nonumber \\
&\quad+ s^2(-\zeta)^{-2}(\gaD P_0)^* \mathcal{P} \gaD(\gaD R(\zeta, 0)^*)^* \mathcal{P} \gaD P_0 
+\mathrm{o}(s^{2})_u, \label{prp} \\
(I-P_0) R(\zeta, s)P_0 &= s (-\zeta)^{-1}(I-P_0) (\gaD R(\zeta, 0)^*)^* \mathcal{P} \gaD P_0 
+ \mathrm{o}(s)_u.\label{7.44.2}
\end{align}
Here and below we write $\mathrm{o}(s^\alpha)_u$ to indicate a term which is $\mathrm{o}(s^\alpha)$ 
as $s\to0$ uniformly for $\zeta\in\gamma$.

To prove \eqref{rp} we note that $R(\zeta,0) P_0 = (-\zeta)^{-1} P_0$, by \eqref{decomp} and the 
relation $S P_0 = 0$. Using Lemma \ref{RminusR} with $u = P_0 v$, we see that 
\begin{equation*}
R(\zeta, s) P_0 - (-\zeta)^{-1} P_0 
= s (-\zeta)^{-1} (\gaD R(\zeta, 0)^*)^* \mathcal{P} \gaD P_0 + \mathrm{O}(s^2)_u,
\end{equation*}
which gives (\ref{rp}) with a slightly better error. (Our errors are 
stated generally as $\mathrm{o}(\cdot)_u$ for consistency.)

For (\ref{eq1.63}) we observe 
\begin{equation*}
(\gamma_D (P_0 R(\zeta, 0))^*)^* = P_0 R(\zeta, 0) \gamma_D^* 
= P_0 (\gamma_D R(\zeta,0)^*)^*,
\end{equation*}
and likewise 
\begin{equation*}
(\gamma_D (P_0 R(\zeta, 0))^*)^* = (\gamma_D ((-\bar{\zeta})^{-1} P_0))^* 
= (-\zeta)^{-1} (\gamma_D P_0)^*,
\end{equation*}
so that 
\begin{equation*}
P_0 (\gamma_D R(\zeta,0)^*)^* = (-\zeta)^{-1} (\gamma_D P_0)^*.
\end{equation*}
If we apply $P_0$ on the left to the identity in Lemma \ref{RminusR},
and use this last relation, we arrive at (\ref{eq1.63}).

For (\ref{prp}) we again take $u = P_0 v$ in Lemma \ref{RminusR},
and we apply $P_0$ on the left of the resulting expression. Finally,
(\ref{7.44.2}) is a straightforward consequence of (\ref{rp})
and (\ref{prp}).

{\em Step 2.}\, We claim the following asymptotic relations for the Riesz projections:
\begin{align}\label{ptp}
P(s) P_0 &= P_0 + s(\gaD S)^* \mathcal{P} \gaD P_0 + \mathrm{o}(s)_u,\\
\label{ppt}
P_0 P(s) &= P_0 + s(\gaD P)^* \mathcal{P} \gaD S + \mathrm{o}(s)_u,\\
\label{ppp}
P_0 P(s) P_0 &= P_0 - s^2(\gaD P)^* \mathcal{P} \gaD(\gaD (S^2))^* \mathcal{P} \gaD P_0 + \mathrm{o}(s^{2})_u.
\end{align}
To see (\ref{ptp}), we integrate (\ref{rp}) with $-\frac{1}{2\pi i}\int_\gamma(\cdot)\,d\zeta$. We find
\begin{equation*}
\begin{aligned}
P(s) P_0 &= -\frac{1}{2\pi i} \int_\gamma (-\zeta)^{-1} d\zeta P_0 
-\frac{s}{2\pi i} \int_\gamma (-\zeta)^{-1} (\gamma_D R(\zeta,0)^*)^* d\zeta \mathcal{P} \gamma_D P_0 
+ \mathrm{o}(s)_u \\
&= P_0 - s \Big(\gamma_D \frac{1}{2\pi i} \int_\gamma (-\zeta)^{-1} R(\zeta,0)^* d\zeta \Big)^* \mathcal{P} \gamma_D P_0 
+ \mathrm{o}(s)_u \\
&= P_0 + s (\gamma_D S^*)^* \mathcal{P} \gamma_D P_0 + \mathrm{o}(s)_u,
\end{aligned}
\end{equation*}
from which (\ref{ptp}) follows because $S$ is self-adjoint. Likewise, 
(\ref{ppt}) and (\ref{ppp}) follow respectively by applying 
$-\frac{1}{2\pi i}\int_\gamma(\cdot)\,d\zeta$ to (\ref{eq1.63})
and (\ref{prp}). 

{\em Step 3.}\, We next claim the following asymptotic relations for 
the transformation operators defined in \eqref{dfnUUinv}:
\begin{align}
U(s) &= I+s \Big((\gaD S)^* \mathcal{P} \gaD P_0 
- (\gaD P_0)^* \mathcal{P} \gaD S \Big) + \mathrm{o}(s)_u, \lb{eq1.72}\\
U(s)^{-1} &= I+s \Big((\gaD P_0)^* \mathcal{P} \gaD S
- (\gaD S)^* \mathcal{P} \gaD P_0) \Big) + \mathrm{o}(s)_u, \label{ut-1} \\
P_0 U(s) P_0 &= P_0 - \frac{1}{2} s^2 (\gaD P_0)^* \mathcal{P} \gaD(\gaD (S^2))^* \mathcal{P} \gaD P_0 
+\mathrm{o}(s^2)_u, \label{pup} \\
\label{u-1}
P_0 U(s)^{-1}P_0 &= P_0 - \frac{1}{2}s^2 (\gaD P_0)^* \mathcal{P} \gaD(\gaD (S^2))^* \mathcal{P} \gaD P_0 
+\mathrm{o}(s^2)_u, \\
P_0 U(s)^{-1}(I-P_0) &= s(\gaD P_0)^* \mathcal{P} \gaD S
+\mathrm{o}(s)_u.\label{7.44.1}
\end{align}

Indeed, recalling that $D(s) = P(s) - P_0$ and using \eqref{ptp} and \eqref{ppt} yields
\begin{align}\label{qq}
D^2(s) &= (P(s)-P_0)(P(s)-P_0)=P(s)+P_0-P(s)P_0-P_0P(s)\\
&=(P(s)-P_0)+(P_0-P(s)P_0)+(P_0-P_0 P(s))
=D(s)-s P^{(1)}+\mathrm{o}(s)_u,
\no
\end{align}
where from Step 2 
\begin{equation*}
(P_0-P(s)P_0)+(P_0-P_0 P(s)) 
= -(\gamma_D S)^* \mathcal{P} \gamma_D P_0 - s (\gamma_D P_0)^* \mathcal{P} \gamma_D S 
+\mathrm{o}(s)_u, 
\end{equation*}
and we define 
\begin{equation*}
P^{(1)} = (\gaD S)^* \mathcal{P} \gaD P_0 
+(\gaD P_0)^* \mathcal{P} \gaD S.
\end{equation*} 
Hence,
\begin{align}\label{qqq}
(I-D(s))(D(s)-s P^{(1)}) = s D(s)P^{(1)}+\mathrm{o}(s)_u=\mathrm{o}(s)_u,
\end{align}
and therefore $D(s)=s  P^{(1)}+(I-D(s))^{-1}\mathrm{o}(s)_u$, yielding
\begin{align}\label{qqqq}
D(s)=s  P^{(1)}+\mathrm{o}(s)_u.
\end{align}

Turning now to (\ref{eq1.72}), we have 
\begin{equation*}
\begin{aligned}
U(s) &= (I - D(s)^2)^{-1/2} \Big((I-P(s))(I-P_0) + P(s) P_0 \Big) \\
&= I - P_0 - P(s) + 2 P(s) P_0 + \mathrm{O} (s^2)_u \\
&= I - P_0 - P(s) + 2P_0 + 2s (\gamma_D S)^* \mathcal{P} \gamma_D P_0 + \mathrm{o}(s)_u \\
&= I - D(s) + 2s (\gamma_D S)^* \mathcal{P} \gamma_D P_0 + \mathrm{o}(s)_u \\
&= I - s P^{(1)} + 2s (\gamma_D S)^* \mathcal{P} \gamma_D P_0 + \mathrm{o}(s)_u \\
&= I - s (\gamma_D S)^* \mathcal{P} \gamma_D P_0 - s(\gamma_D P_0)^* \mathcal{P} \gamma_D S
+ 2 s(\gamma_D S)^* \mathcal{P} \gamma_D P_0 + \mathrm{o}(s)_u \\
&= I + s (\gamma_D S)^* \mathcal{P} \gamma_D P_0 - s(\gamma_D P_0)^* \mathcal{P} \gamma_D S
+ \mathrm{o}(s)_u, 
\end{aligned}
\end{equation*}
which is (\ref{eq1.72}).
Likewise, (\ref{ut-1}) is established by a similar calculation, beginning 
with 
\begin{equation*}
U(s)^{-1} = (I - D(s))^{-1/2} \Big((I-P_0)(I-P (s)) + P_0 P (s) \Big).
\end{equation*} 

Formula \eqref{pup} follows from the calculation
\begin{equation*}
\begin{split}
P_0 U(s) P_0 &= P_0 (I-D^2(s))^{-1/2} P(s) P_0
= P_0 P(s) P_0 + \frac{1}{2}P_0 D(s)^2 P_0 +O(s^3)_u \\
&= P_0 - s^2(\gaD P_0)^* \mathcal{P} \gaD(\gaD (S^2))^* \mathcal{P} \gaD P_0 
+ \frac{1}{2} s^2 P_0 (P^{(1)})^2 P_0 + \mathrm{o}(s^2)_u \\
&= P_0 -s^2 (\gaD P_0)^* \mathcal{P} \gaD(\gaD (S^2))^* \mathcal{P} \gaD P_0 \\
&+ \frac{1}{2} s^2 P_0 \Big(((\gaD S)^* \mathcal{P} \gaD P_0 +
(\gamma_D P_0)^* \mathcal{P} \gaD S\Big)^2 P_0 
+\mathrm{o}(s^2)_u,
\end{split}
\end{equation*}
from which we see that  
\begin{equation*}
\begin{split}
P_0 U(s) P_0 &=
P_0 - s^2 (\gaD P_0)^* \mathcal{P} \gaD(\gaD (S^2))^* \mathcal{P} \gaD P_0 \\
& + \frac{1}{2} s^2 P_0 
\Big{\{} (\gaD S)^* \mathcal{P} \gaD P_0 (\gaD S)^* \mathcal{P} \gaD P_0
+ (\gaD S)^* \mathcal{P} \gaD P_0 (\gaD P_0)^* \mathcal{P} \gaD S \\
&+ (\gaD P_0)^* \mathcal{P} \gaD S (\gaD S)^* \mathcal{P} \gaD P_0
+ (\gaD P_0)^* \mathcal{P} \gaD S (\gaD P_0)^* \mathcal{P} \gaD S
\Big{\}} P_0 + \mathrm{o}(s^2)_u.
\end{split}
\end{equation*}
Three terms are eliminated by the relation $S P_0 = 0$ (first, second, fourth), 
and we also have the identity $S (\gamma_D S)^* = (\gamma_D S^2)^*$. 
Combining these observations, we obtain (\ref{pup}).  

A similar argument yields \eqref{u-1}, and \eqref{7.44.1} follows 
using \eqref{ut-1}.

{\em Step 4.}\, We now claim the following asymptotic relation for the resolvent:
\begin{equation}\lb{finstep}
\begin{split}
P_0 U(s)&^{-1} R(\zeta, s) U(s) P_0 = (-\zeta)^{-1}P_0
+ (-\zeta)^{-2} s(\gaD P_0)^* \mathcal{P} \gaD P_0 \\ 
&- (-\zeta)^{-2} s^2 P_0 V(0)P_0 
- (-\zeta)^{-1} s^2 (\gaD P_0)^* \mathcal{P} \gaD(\gaD (S^2))^* \mathcal{P} \gaD P_0 \\
&+ (-\zeta)^{-2} s^2 (\gaD P_0)^* \mathcal{P} \gaD 
\Big{\{} \gaD \Big[R(\zeta, 0)(I+2(-\zeta) S+(-\zeta)^2 S^2)\Big]^* \Big{\}}^* \mathcal{P} \gamma_D P_0 
+ \mathrm{o}(s^2)_u.
\end{split}
\end{equation}
To see this, we begin by writing 
\begin{equation*}
\begin{aligned}
P_0 U(s)^{-1} R(\zeta, s) U(s) P_0 
&= P_0 U(s)^{-1} 
\Big(P_0 R(\zeta, s) P_0 + (I-P_0) R(\zeta, s) P_0 \\
&+ P_0 R(\zeta, s) (I - P_0) + (I-P_0) R(\zeta, s) (I-P_0) 
\Big) U(s) P_0 \\
&=A_1+A_2+A_3+A_4, 
\end{aligned}
\end{equation*}
where we denote
\begin{align*}
\begin{split}
&A_1 = P_0 U(s)^{-1}P_0 R(\zeta, s)P_0 U(s)P_0 ,\\&
\quad A_2 = P_0 U(s)^{-1}(I-P_0)R(\zeta, s)P_0 U(s)P_0,\\
&\qquad A_3 = P_0 U(s)^{-1}P_0 R(\zeta, s)(I-P_0)U(s)P_0,
\\&\quad\qquad
 A_4=P_0 U(s)^{-1}(I-P_0) R(\zeta, s)(I-P_0)U(s)P_0.
\end{split}
\end{align*}

For $A_1$, we use the fact that $P_0$ is a projection, along with 
\eqref{u-1}, \eqref{prp} and \eqref{pup},  to obtain
\begin{align*}
\begin{split}
A_1 &= (P_0 U(s)^{-1} P_0) (P_0 R(\zeta,s) P_0) (P_0 U(s) P_0) \\
&= \Big(P_0 - \frac{1}{2}s^2 (\gaD P_0)^* \mathcal{P} \gaD(\gaD (S^2))^* \mathcal{P}\gaD P_0
+ \mathrm{o}(s^2)_u \Big) \\
&\quad \times \Big((-\zeta)^{-1}P_0 + (-\zeta)^{-2} s (\gaD P_0)^* \mathcal{P} \gaD P_0 - s^2 (-\zeta)^{-2} P_0 V(0)P_0 \\
&+ s^2 (-\zeta)^{-2} (\gaD P_0)^* \mathcal{P} \gaD(\gaD R(\zeta, 0)^*)^* \mathcal{P} \gaD  P_0
+\mathrm{o}(s^{2})_u \Big) \\
&\quad \times \Big(P_0 - \frac{1}{2}s^2 (\gaD P_0)^* \mathcal{P} \gaD(\gaD (S^2))^* \mathcal{P} \gaD P_0
+\mathrm{o}(s^2)_u \Big) \\
&= (-\zeta)^{-1}P_0 + s (-\zeta)^{-2} (\gaD P_0)^* \mathcal{P} \gaD P_0 
- s^2 (-\zeta)^{-2} P_0 V(0)P_0 \\
&\quad + s^2 (-\zeta)^{-2} (\gaD P_0)^* \mathcal{P} \gaD(\gaD R(\zeta, 0)^*)^* \mathcal{P} \gaD P_0 \\
&\quad - s^2 (-\zeta)^{-1} (\gaD P_0)^* \mathcal{P} \gaD(\gaD (S^2))^* \mathcal{P} \gaD P_0
+\mathrm{o}(s^{2})_u.
\end{split}
\end{align*}

Likewise, it follows from \eqref{7.44.1} and \eqref{7.44.2} that 
\begin{align*}
\begin{split}
A_2 &= (P_0 U(s)^{-1} (I-P_0)) ((I-P_0) R(\zeta,s)P_0) (P_0 U(s) P_0) \\ 
&= \Big(s(\gaD P_0)^* \mathcal{P} \gaD S + \mathrm{o}(s)_u\Big) 
\Big( s(-\zeta)^{-1}(I-P_0)(\gaD R(\zeta, 0)^*)^* \mathcal{P} \gaD P_0 
+\mathrm{o}(s)_u \Big) \\
& \quad \times \Big(P_0 - s^2 \frac{1}{2} (\gaD P_0)^* \mathcal{P} \gaD(\gaD (S^2))^* \mathcal{P} \gaD P_0 
+\mathrm{o}(s^2)_u \Big) \\
&= s^2 (-\zeta)^{-1} (\gaD P_0)^* \mathcal{P} \gaD S(\gaD R(\zeta, 0)^*)^* \mathcal{P} \gaD P_0
+\mathrm{o}(s^2)_u.
\end{split}
\end{align*}

For $A_3$, we have 
\begin{align*}
\begin{split}
A_3 &= (P_0 U(s)^{-1} P_0) (P_0 R(\zeta, s)) ((I-P_0) U(s)P_0 ) \\
&= \Big(P_0 - \frac{1}{2}s^2(\gaD P_0)^* \mathcal{P} \gaD(\gaD (S^2))^* \mathcal{P} \gaD P_0 
+\mathrm{o}(s^2)_u \Big) \\
& \quad \times \Big((-\zeta)^{-1} P_0 + s(-\zeta)^{-1} (\gaD  P_0)^* \mathcal{P} \gaD R(\zeta, 0)
+\mathrm{o}(s)_s \Big) \Big(s(\gaD S)^* \mathcal{P} \gaD P_0 + \mathrm{o}(s)_s \Big) \\
&= s^2 (-\zeta)^{-1} (\gaD P_0)^* \mathcal{P} \gaD R(\zeta, 0)(\gaD S)^* \mathcal{P} \gaD P_0
+\mathrm{o}(s^2)_u,
\end{split}
\end{align*}
while for $A_4$ we have 
\begin{align*}
\begin{split}
A_4&= (P_0 U(s)^{-1}(I-P_0)) R(\zeta, s) ((I-P_0) U(s) P_0 \\
&= s^2(\gaD P_0)^* \mathcal{P} \gaD S R(\zeta, 0)(\gaD S)^* \mathcal{P} \gaD P_0
+\mathrm{o}(s^2)_u.
\end{split}
\end{align*}

Collecting all these terms, we obtain \eqref{finstep}:
\begin{align*}
\begin{split}
P_0 U(s)^{-1}R &(\zeta, s)U(s)P_0
= (-\zeta)^{-1}P_0 + s (-\zeta)^{-2} (\gaD P_0)^* \mathcal{P} \gaD P_0 
- s^2 (-\zeta)^{-2} P_0 V(0) P_0 \\
&+ s^2 (-\zeta)^{-2} (\gaD P_0)^* \mathcal{P} \gaD(\gaD R(\zeta, 0)^*)^* \mathcal{P}\gaD P_0 \\
&\quad - s^2 (-\zeta)^{-1} (\gaD  P_0)^* \mathcal{P} \gaD(\gaD (S^2))^* \mathcal{P} \gaD P_0 
+ \mathrm{o}(s^{2}) \\
&\quad \quad + s^2 (-\zeta)^{-1} (\gaD P_0)^* \mathcal{P} \gaD S(\gaD R(\zeta, 0)^*)^* \mathcal{P} \gaD P_0
+ \mathrm{o}(s^2)_u \\
&\quad \quad \quad + s^2 (-\zeta)^{-1} (\gaD P_0)^* \mathcal{P} \gaD R(\zeta, 0)(\gaD S)^* \mathcal{P} \gaD P_0
+ \mathrm{o}(s^2)_u \\
&\quad \quad \quad \quad + s^2 (\gaD P_0)^* \mathcal{P} \gaD SR(\zeta, 0)(\gaD S)^* \mathcal{P} \gaD P_0 
+ \mathrm{o}(s^2)_u,
\end{split}
\end{align*}
from which the claim is immediate.

{\em Step 5.}\, We are ready to finish the proof of the lemma. 
Using the standard relation from  \cite[Equation (III.6.24)]{Kato} we have
\[
H(s)P(s)=-\frac{1}{2\pi i}\int_\gamma \zeta R(\zeta,s)\,d\zeta,
\]
and applying integration $-\frac{1}{2\pi i}\int_\gamma\zeta(\cdot)\,d\zeta$ in \eqref{finstep}, we
find 
\begin{equation*}
\begin{aligned}
-\frac{1}{2\pi i} &\int_\gamma \zeta P_0 U(s)^{-1} R(\zeta,s) U(s) P_0 \,d\zeta \\
&= -s (\gamma_D P_0)^* \mathcal{P} \gamma_D P_0 + s^2 P_0 V(0) P_0 \\
& \quad - s^2 (\gamma_D P_0)^* \mathcal{P} \gamma_D 
\Big(\gamma_D \frac{1}{2\pi i} \int_\gamma \zeta^{-1} R(\zeta,s)\,d\zeta \Big)^*
\mathcal{P} \gamma_D P_0 + \mathrm{o}(s^2) \\
&= - s (\gaD P_0)^* \mathcal{P} \gaD P_0 + s^2 P_0 V(0) P_0 
- s^2(\gaD P_0)^* \mathcal{P} \gaD(\gaD S)^* \mathcal{P} \gaD P_0 
+ \mathrm{o}(s^2).
\end{aligned}
\end{equation*}
\end{proof}

We now complete our perturbation analysis with the following lemma. 

\begin{lemma}
Under the assumptions of Theorem \ref{main}, we have 
\begin{equation*}
\Mor H(s) = \Mor (B) + \Mor (Q (V(0) - (P_{R_0} \Lambda_0 P_{R_0})^2) Q),
\end{equation*}
for $s > 0$ sufficiently small.
\end{lemma}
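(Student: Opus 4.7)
The plan is to reduce the question to a finite-dimensional perturbation problem on $\ran(P_0)$ and then apply second-order (partially degenerate) perturbation theory around $\ker B$. By Claim~\ref{HpvP}, $\Mor H(s) = \Mor\bigl(H(s)|_{\ran P(s)}\bigr)$, so it suffices to count the negative eigenvalues of $H(s)$ restricted to the $d$-dimensional invariant subspace $\ran(P(s))$. Since $P_0$ and $P(s)$ are orthogonal Riesz projections of self-adjoint operators, the transformation operator $U(s)$ in \eqref{dfnUUinv} is actually unitary (cf.\ \cite[Section I.4.6]{Kato}) and, by \eqref{up}, maps $\ran(P_0)$ isomorphically onto $\ran(P(s))$. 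Consequently $\Mor H(s) = \Mor A(s)$, where
\[
A(s) := P_0 U(s)^{-1} H(s) P(s) U(s) P_0
\]
is self-adjoint on the fixed, finite-dimensional space $\ran(P_0)$, which Lemma~\ref{zero_eigenspace} identifies with $(\ker P_{D_0})\cap(\ker P_{D_1})$ via constants.

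\textbf{Block decomposition and leading-order analysis.} By Lemma~\ref{lem:simile}, $A(s) = s A_1 + s^2 A_2 + \oh(s^2)$, where $A_1 = -(\gaD P_0)^* \mathcal{P} \gaD P_0$ and $A_2 = P_0 V(0) P_0 - (\gaD P_0)^* \mathcal{P}\gaD(\gaD S)^*\mathcal{P}\gaD P_0$. For constants $u,v$ in $(\ker P_{D_0})\cap(\ker P_{D_1})$, using $\gaD u = \binom{u}{u}$ and the block form of $\mathcal{P}$, one checks directly that $(A_1 u,v) = -\bigl(\mathcal{P}\binom{u}{u},\binom{v}{v}\bigr) = (Bu,v)$, so $A_1 = B$ under our identification. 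Split $\ran(P_0) = \ran(Q) \oplus \ran(I-Q)$ with $Q$ the projection onto $\ker B$. In this block form the matrix of $A(s)$ has diagonal blocks $s^2 Q A_2 Q + \oh(s^2)$ and $sB|_{\ran(I-Q)} + \Oh(s^2)$, with off-diagonal blocks of order $s^2$; a standard Schur complement argument then shows that the eigenvalues of $A(s)$ split, for small $s>0$, into two groups: the $\dim\ran(I-Q)$ eigenvalues associated with $\ran(I-Q)$ have the form $s \lambda_i + \Oh(s^2)$ with $\lambda_i$ a nonzero eigenvalue of $B$ and contribute $\Mor(B)$ negative eigenvalues, while the $\dim\ker B$ eigenvalues bifurcating from zero have the form $s^2 \mu_i + \oh(s^2)$ with $\mu_i$ an eigenvalue of $QA_2 Q$ on $\ran Q$. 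Under the non-degeneracy hypothesis the $\mu_i$ are all nonzero, contributing an additional $\Mor(QA_2 Q)$ negative eigenvalues.

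\textbf{Identifying $QA_2 Q$, the main obstacle.} It remains to establish the identity
\[
Q A_2 Q = Q\bigl(V(0) - (P_{R_0}\Lambda_0 P_{R_0})^2\bigr) Q,
\]
which is the most delicate technical step. The $V(0)$-part is immediate. For the other piece, fix $u \in \ran Q$; then $Bu=0$ gives $P_{R_0}\Lambda_0 P_{R_0} u = P_{R_1}\Lambda_1 P_{R_1} u =: w$, with $w \in (\ker P_{D_0})\cap(\ker P_{D_1})$, so $\mathcal{P}\gaD u = \binom{-w}{w}$. Set $\phi := (\gaD S)^*\binom{-w}{w}\in L^2(0,1)$. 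Because $S$ ranges in $\dom(H(0))\cap\ran(I-P_0)$, we have $\phi\in\dom h(0)$ and $P_0\phi = 0$, and the adjoint relation translates via integration by parts into the weak boundary-value problem
\[
h(0)(\phi,v) = \bigl(\tbinom{-w}{w},\gaD v\bigr)_{\bbR^{2n}},\qquad v\in\dom h(0).
\]
This forces $-\phi'' = 0$ on $(0,1)$, so $\phi(x) = a_0 + a_1 x$ is affine with $P_{D_i}\phi(i)=0$ and $(I-P_{D_i})a_1 = (I-P_{D_i})w = w$. Pairing against $v\in\ran Q$ with associated $\tilde w$ then yields
\[
\bigl(\mathcal{P}\gaD(\gaD S)^*\mathcal{P}\gaD u,\gaD v\bigr)_{\bbR^{2n}}
= \bigl(\tbinom{a_0}{a_0+a_1},\tbinom{-\tilde w}{\tilde w}\bigr)_{\bbR^{2n}}
= a_1\cdot\tilde w = w\cdot\tilde w,
\]
where the last equality uses $\tilde w\in\ker P_{D_0}$ to replace $a_1$ by $(I-P_{D_0})a_1 = w$. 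Since $w\cdot\tilde w = ((P_{R_0}\Lambda_0 P_{R_0})^2 u,v)_{\bbR^n}$, the identity follows. The chief difficulty is the careful handling of the distributional boundary traces in constructing $\phi$ and the verification that the pairing above is independent of the undetermined components of $a_0$ and $a_1$ in $\ran P_{D_0}\cap\ran P_{D_1}$; the key algebraic point that unlocks the computation is precisely $Bu=0$ on $\ran Q$, which equalizes the boundary data coming from the two endpoints.
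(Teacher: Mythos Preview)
Your proof is correct and globally mirrors the paper's: reduce via Claim~\ref{HpvP} and Lemma~\ref{lem:simile}, identify $A_1=B$, then compute the second-order term on $\ker B$ (the paper cites Kato II.5.11 where you run an equivalent Schur-complement argument). The substantive difference is in identifying $QA_2Q$. The paper represents $(\gaD S)^*$ via a contour integral (its Proposition~\ref{S_star}), solves the resulting boundary-value problem $-w''-\zeta w=0$ with inhomogeneous Robin data, and exploits the reflection symmetry $w(x;\zeta)=-w(1-x;\zeta)$ (Proposition~\ref{solving_for_w}) to obtain $w(x;0)=v_2x-\tfrac12 v_2$, from which $\psi(0)-\psi(1)=-v_2$. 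Your weak-formulation route is more direct and bypasses both the contour integral and the symmetry trick: once $\phi$ is affine, only $(I-P_{D_0})a_1=w$ is needed, and the telescoping $(\phi(1)-\phi(0),\tilde w)=(a_1,\tilde w)=(w,\tilde w)$ falls out immediately. One small repair is needed: your stated reason for $\phi\in\dom h(0)$ is not valid, since the fact that $S$ ranges in $\dom(H(0))$ constrains $\ran S$, not $\ran(\gaD S)^*$. Instead, test the defining relation $\langle u,\phi\rangle_{L^2}=(\gaD Su,z)$ first with $u=-v''$, $v\in C_c^\infty(0,1)$, to get $\phi''=0$ distributionally (hence $\phi$ is affine and smooth), and then with $u=H(0)v$ for general $v\in\dom H(0)$; integrating by parts and varying $v(i)\in\ker P_{D_i}$, $v'(i)\in\ran P_{D_i}$ independently yields exactly $P_{D_i}\phi(i)=0$ and $(I-P_{D_i})a_1=w$.
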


\begin{proof}
By Claim \ref{HpvP}, it suffices to count the negative eigenvalues 
of the finite-dimensional operator $H(s)P(s)$. By Lemma \ref{lem:simile}, 
it is enough to obtain an asymptotic formula for the eigenvalues of the operator 
$T(s) := P_0 U(s)^{-1}H(s)P(s)U(s)P_0$, where
\[
T(s)=T+s T^{(1)}+s^2T^{(2)}+\mathrm{o}(s^2)
\text{ as $s\to0$}
\]
and we denote
\begin{align}
T&=0,\quad T^{(1)}=-(\gaD P_0)^* \mathcal{P} \gaD P_0, \quad 
T^{(2)}=T^{(2)}_1+T^{(2)}_2, \nonumber\\
T^{(2)}_1&=P_0 V(0)P_0,\quad T^{(2)}_2=-(\gaD P_0)^* \mathcal{P} \gaD
(\gaD S)^* \mathcal{P} \gaD P_0. \lb{dfnT22}
\end{align}
(For this calculation, we're following \cite{Kato}, along with some 
notation from that reference.)
These operators act on the $d$-dimensional space 
$\ran(P_0)=\ker(H(0)) = (\ker P_{D_0}) \cap (\ker P_{D_1})$. 
We will apply a well known finite-dimensional perturbation result 
\cite[Theorem II.5.1]{Kato} to the family $\{T(s)\}$ for $s$ near zero. 
For this we will need some more notations and preliminaries.

Let $\{\lambda^{(1)}_j\}_{j=1}^{\mathfrak{m}(1)}$ denote the 
$\mathfrak{m} (1)$ distinct eigenvalues of the operator $T^{(1)}$, 
let $m_j^{(1)}$ denote their multiplicities, and let $P^{(1)}_j$ 
denote the respective orthogonal Riesz spectral projections. We
define the bilinear form
\begin{equation*}
\mathfrak{b} (p,q) = (B p,q)_{\mathbb{C}^n}, \quad
\forall p, q \in (\ker P_{D_0}) \cap (\ker P_{D_1}),
\end{equation*}
where we recall that we denote by $B$ the operator obtained by 
restricting $(P_{R_0} \Lambda_0 P_{R_0} - P_{R_1} \Lambda_1 P_{R_1})$ to 
the space $(\ker P_{D_0}) \cap (\ker P_{D_1})$.

The quadratic form on $\ran(P)$ associated with $T^{(1)}$ is given by 
\begin{align*}
\mathfrak{t}^{(1)} (p,q) &= \langle T^{(1)}p,q \rangle_{L^2 (0,1)} 
= - \langle (\gaD P_0)^* \mathcal{P} \gaD P_0 p,q \rangle_{L^2 (0,1)} \\
&= - \Big(\mathcal{P} \gaD P_0 p, \gaD P_0 q \Big)_{\mathbb{C}^{2n}}
= -\Big( \begin{pmatrix} - P_{R_0} \Lambda_0 P_{R_0} & 0 \\ 0 & P_{R_1} \Lambda_1 P_{R_1} \end{pmatrix}
\begin{pmatrix} p \\ p \end{pmatrix}, \begin{pmatrix} q \\ q \end{pmatrix} \Big)_{\mathbb{C}^{2n}} \\
&= \Big( (P_{R_0} \Lambda_0 P_{R_0} - P_{R_1} \Lambda_1 P_{R_1}) p, q \Big)_{\mathbb{C}^n}
= \Big(B p, q \Big)_{\mathbb{C}^n}
=: \mathfrak{b}(p,q).
\end{align*}
In particular, we see that the number of negative values in 
$\{\lambda^{(1)}_j\}_{j=1}^{\mathfrak{m}(1)}$, including multiplicities, 
is $n_- (\mathfrak{b})$ (the number of negative values of $B$, 
including multiplicities), and likewise for the number of positive 
and zero values in $\{\lambda^{(1)}_j\}_{j=1}^{\mathfrak{m}(1)}$
with the respective values $n_+(\mathfrak{b})$ and $n_0(\mathfrak{b})$. 

Turning now to $T^{(2)}$, and following \cite[Section II.5]{Kato}, 
we let $\lambda^{(2)}_{j k}$, $j=1,\dots,\mathfrak{m} (1)$, $k=1,\dots,m^{(1)}_j$, 
denote the eigenvalues of the family of operators $P^{(1)}_j T^{(2)} P^{(1)}_j$ 
in $\ran(P^{(1)}_j)$ (recall that in our case the 
unperturbed operator is just $T=0$ and thus its reduced resolvent is zero 
and $P^{(1)}_j\widetilde{T}^{(2)}P^{(1)}_j=P^{(1)}_jT^{(2)}P^{(1)}_j$ using the 
notations from \cite[Section II.5]{Kato}). By \cite[Theorem II.5.11]{Kato} the 
eigenvalues $\lambda_{jk}(s)$ of the operator $T(s)$ are given by the formula
\beq\lb{eq1.84}
\lambda_{jk}(s)=s  \lambda^{(1)}_j+s^2\lambda^{(2)}_{jk}+\mathrm{o}(s^2)
\,\text{ as $s\to0$}, \, j=1,\dots,\mathfrak{m} (1), k=1,\dots,m^{(1)}_j.
\enq

It's clear from (\ref{eq1.84}) that if $\lambda_j^{(1)} \ne 0$ 
the value of $\lambda_{jk}^{(2)}$ will be 
inconsequential for $s$ sufficiently small. In particular, if 
$\lambda_j^{(1)} < 0$ then $T(s)$ (and hence $H(s)$) will have a 
negative eigenvalue, while if $\lambda_j^{(1)} > 0$ then $T(s)$ 
(and hence $H(s)$) will have a positive eigenvalue. Since our convention
takes the Morse index to be a count of negative eigenvalues, we 
conclude that $\Mor (B)$ is precisely a count of the negative 
eigenvalues of $H(s)$ corresponding with $\lambda_j^{(1)} < 0$.

In the event that $\lambda_j^{(1)} = 0$ we need a sign for $\lambda_{jk}^{(2)}$ 
(which will be non-zero by our non-degeneracy assumption). For notational 
convenience, we index the eigenvalues so that $\lambda^{(1)}_1  = 0$, with 
corresponding Riesz projection $P^{(1)}_1$ onto the $m_1^{(1)}$-dimensional
eigenspace $\ker B$. The corresponding values 
$\{\lambda_{1 k}^{(2)}\}_{k=1}^{m_1^{(1)}}$ will be eigenvalues of 
$T^{(2)}$, and in particular will be precisely the $m_1^{(1)}$ eigenvalues 
of $P^{(1)}_1 T^{(2)} P^{(1)}_1$. For $p,q \in (\ker P_{D_0}) \cap (\ker P_{D_1})$,
we define 
\begin{equation} \label{T2}
\mathfrak{t}^{(2)} (p,q) = \langle P^{(1)}_1 T^{(2)} P^{(1)}_1 p, q \rangle_{L^2 (0,1)}
= \langle P^{(1)}_1 T^{(2)}_1 P^{(1)}_1 p, q \rangle_{L^2 (0,1)}
+ \langle P^{(1)}_1 T^{(2)}_2 P^{(1)}_1 p, q \rangle_{L^2 (0,1)}.
\end{equation}
For the first summand on the right-hand side of (\ref{T2}), we have
\begin{equation} \label{T21}
\langle P^{(1)}_1 P_0 V(0) P_0 P^{(1)}_1 p, q \rangle_{L^2 (0,1)}
=
(P^{(1)}_1 P_0 V(0) P_0 P^{(1)}_1 p, q )_{\mathbb{C}^n}
= (P^{(1)}_1 V(0) P^{(1)}_1 p, q )_{\mathbb{C}^n},
\end{equation} 
where in the first equality we've observed that the $L^2 (0,1)$ inner
product is equivalent to the $\mathbb{C}^n$ inner product for constant 
vectors, and in the second we've observed that since $P_1^{(1)}$ projects 
onto a subspace of $\ran P_0$ we have $P_0 P^{(1)}_1 = P_1^{(1)}$
and $P^{(1)}_1 P_0  = P_1^{(1)}$. 

For the second summand on the right-hand side, we have 
\begin{equation} \label{T22}
\begin{aligned}
\langle P^{(1)}_1 T^{(2)}_2 P^{(1)}_1 p, q \rangle_{L^2 (0,1)} &= 
- \langle P^{(1)}_1 (\gaD P_0)^* \mathcal{P} \gaD (\gaD S)^* \mathcal{P} \gaD P_0 P^{(1)}_1 p, q \rangle_{L^2 (0,1)} \\
&= - \Big(\gaD (\gaD S)^* \mathcal{P} \gaD P_0 P^{(1)}_1 p, \mathcal{P} \gaD P_0 P^{(1)}_1 q \Big)_{\mathbb{C}^{2n}}.
\end{aligned}
\end{equation}
We notice that if we denote $P^{(1)}_1 p = p_1^{(1)} \in \ker B$ then 
\begin{equation*}
\mathcal{P} \gaD P_0 P^{(1)}_1 p
= \begin{pmatrix} - P_{R_0} \Lambda_0 P_{R_0} & 0 \\ 0 & P_{R_1} \Lambda_1 P_{R_1} p \end{pmatrix} 
\begin{pmatrix} p_1^{(1)} \\ p_1^{(1)} \end{pmatrix}
= \begin{pmatrix} - P_{R_0} \Lambda_0 P_{R_0} p_1^{(1)} \\ P_{R_1} \Lambda_1 P_{R_1} p_1^{(1)} \end{pmatrix}.
\end{equation*}
Since $p_1^{(1)} \in \ker B$, we have $P_{R_0} \Lambda_0 P_{R_0} p_1^{(1)} = P_{R_1} \Lambda_1 P_{R_1} p_1^{(1)}$,
so that 
\begin{equation} \label{target_form}
\mathcal{P} \gaD P_0 P^{(1)}_1 p
= \begin{pmatrix} - P_{R_0} \Lambda_0 P_{R_0} p_1^{(1)} \\ P_{R_0} \Lambda_0 P_{R_0} p_1^{(1)} \end{pmatrix}.
\end{equation}
Of course the same calculation hold for $q$ as well. Setting 
\begin{equation*}
\psi = (\gaD S)^* \begin{pmatrix} - P_{R_0} \Lambda_0 P_{R_0} p_1^{(1)} \\ P_{R_0} \Lambda_0 P_{R_0} p_1^{(1)} \end{pmatrix},
\end{equation*}
we see that 
\begin{equation} \label{just_about_there}
\begin{aligned}
\langle P^{(1)}_1 T^{(2)}_2 P^{(1)}_1 p, q \rangle_{L^2 (0,1)} 
&= - \Big(\gaD \psi, \begin{pmatrix} - P_{R_0} \Lambda_0 P_{R_0} q_1^{(1)} \\ P_{R_0} \Lambda_0 P_{R_0} q_1^{(1)} \end{pmatrix} \Big)_{\mathbb{C}^{2n}} \\
&= (\psi (0) - \psi (1), P_{R_0} \Lambda_0 P_{R_0} P_1^{(1)} q)_{\mathbb{C}^n}.  
\end{aligned}
\end{equation}

At this point, we need to understand the action of $\gaD (\gaD S)^*$ on vectors in the form 
on the right-hand side of (\ref{target_form}). This problem has been studied in detail 
in \cite{GM2008} for the case of multiple space dimensions, and the current setting is 
much easier (though a bit different). We will organize the main points of our discussion into
a pair of propositions. 

\begin{proposition} \label{S_star} Suppose $v = {v_1 \choose v_2} \in \mathbb{C}^{2n}$,
with $v_1 \in \ran P_{R_0}$ and $v_2 \in \ran P_{R_1}$. Then 
\begin{equation*}
(\gaD S)^* v = \frac{1}{2 \pi i} \int_{\Gamma} \zeta^{-1} w (x;\zeta) d\zeta,
\end{equation*}
where $\Gamma$ is a small enough loop around $\zeta = 0$ so that it encloses
no other eigenvalues of $H(0)$, and for each $\zeta \in \Gamma$, $w$ is the unique solution to 
$- w'' - \zeta w = 0$, with boundary conditions
\begin{alignat}{2} \label{bc_for_w}
P_{D_0} w (0) &= 0; & \qquad P_{D_1} w (1) &= 0; \notag \\
P_{N_0} w'(0) &= 0; & \qquad  P_{N_1} w'(1) &= 0; \\
P_{R_0} w'(0) &= - v_1; & \qquad P_{R_1} w'(1) &= v_2. \notag
\end{alignat} 
\end{proposition}

\begin{proof}
We note at the outset that by the definition of $S$ as the reduced
resolvent for $H(0)$, we have 
\begin{equation} \label{gaS}
\begin{aligned}
(\gaD S)^* &= S \gaD^* = \frac{1}{2 \pi i} \int_{\Gamma} \zeta^{-1} R(\zeta, 0)\gaD^* d\zeta \\
&= \frac{1}{2 \pi i} \int_{\Gamma} \zeta^{-1} (\gaD R(\zeta, 0)^*)^* d\zeta. 
\end{aligned}
\end{equation}

Let $f \in L^2 (0,1)$ and consider the equation $- u'' - \bar{\zeta} u = f$, with 
boundary conditions 
\begin{alignat*}{2}
P_{D_0} u (0) &= 0; & \qquad P_{D_1} u (1) &= 0; \\
P_{N_0} u'(0) &= 0; & \qquad  P_{N_1} u'(1) &= 0; \\
P_{R_0} u'(0) &= 0; & \qquad P_{R_1} u'(1) &= 0,
\end{alignat*} 
which is solved by $u (x) = R(\zeta,0)^* f$. Notice that for any 
$v \in \mathbb{C}^{2n}$ we can compute 
\begin{equation} \label{first_way}
(\gaD R(\zeta,0)^* f, v)_{\mathbb{C}^{2n}} 
= (\gaD u, v)_{\mathbb{C}^{2n}} 
= (u(0), v_1)_{\mathbb{C}^{n}} 
+ (u(1), v_2)_{\mathbb{C}^{n}}. 
\end{equation} 
On the other hand, 
\begin{equation*}
(\gaD R(\zeta,0)^* f, v)_{\mathbb{C}^{2n}} 
= \langle f, (\gaD R(\zeta,0)^*)^* v \rangle_{L^2 (0,1)}. 
\end{equation*}

Motivated by the analysis of \cite{GM2008}, we set 
\begin{equation*}
w := (\gaD R(\zeta,0)^*)^* v,
\end{equation*}
so that 
\begin{equation} \label{second_way}
\begin{aligned}
\langle f, (\gaD R(\zeta,0)^*)^* v \rangle_{L^2 (0,1)} 
&= \langle - u'' - \bar{\zeta} u, w \rangle_{L^2 (0,1)} \\
&= - (u',w)_{\mathbb{C}^n} \Big|_0^1 + (u,w')_{\mathbb{C}^n} \Big|_0^1
- \langle u, w'' \rangle_{L^2 (0,1)} - \bar{\zeta} \langle u, w\rangle_{L^2 (0,1)}.
\end{aligned}
\end{equation}
In order to eliminate the $L^2 (0,1)$ inner products, we take $w$ to solve 
$- w'' - \zeta w = 0$, and in order to make (\ref{first_way}) correspond
with (\ref{second_way}) we choose the boundary conditions 
(\ref{bc_for_w}).

With this choice of $w$, we have 
\begin{equation*}
\begin{aligned}
(u'(1),w(1))_{\mathbb{C}^n} &= (u'(1), P_{D_1} w(1) + P_{N_1} w(1) + P_{R_1} w(1))_{\mathbb{C}^n} \\
&= (u'(1), P_{D_1} w(1))_{\mathbb{C}^n} + (P_{N_1} u'(1), w(1))_{\mathbb{C}^n}
+ (P_{R_1} u'(1),w(1))_{\mathbb{C}^n} = 0,
\end{aligned}
\end{equation*}
and likewise $(u'(0),w(0))_{\mathbb{C}^n} = 0$. Proceeding by an almost identical 
calculation we find $(u(1) ,w'(1))_{\mathbb{C}^n} = (u(1),v_2)_{\mathbb{C}^n}$
and $(u(0) ,w'(0))_{\mathbb{C}^n} = - (u(0),v_1)_{\mathbb{C}^n}$.

Combining with (\ref{gaS}), we see that the proposition follows.
\end{proof}

Recalling (\ref{target_form}) we see that we need to solve for 
$w$ with $v_1 = - P_{R_0} \Lambda_0 P_{R_0} p_1^{(1)}$ and 
$v_2 = P_{R_0} \Lambda_0 P_{R_0} p_1^{(1)}$. We do this with 
the following proposition.

\begin{proposition} \label{solving_for_w} 
If $v_1 = -v_2$ in (\ref{bc_for_w}), with 
$v_1, v_2 \in (\ran P_{R_0}) \cap (\ran P_{R_1})$,
then 
\begin{equation*}
w (x;0) = v_2 x - \frac{1}{2} v_2.
\end{equation*} 
\end{proposition}

\begin{proof}
First, notice that if we set $\check{w} (x;\zeta) = - w (1-x; \zeta)$,
we find that $w$ and $\check{w}$ solve the same equation, so that 
by uniqueness (for $|\zeta| > 0$ sufficiently small) we have 
\begin{equation} \label{check_relation}
w(x;\zeta) = - w (1-x; \zeta).  
\end{equation}

Next, we set $\tilde{w} = w - v_2 x$, so that
\begin{equation*}
-\tilde{w}'' - \zeta \tilde{w} = \zeta v_2 x, 
\end{equation*}
with homogeneous boundary conditions 
\begin{alignat*}{2}
P_{D_0} \tilde{w} (0) &= 0; & \qquad P_{D_1} \tilde{w} (1) &= 0; \\
P_{N_0} \tilde{w}'(0) &= 0; & \qquad  P_{N_1} \tilde{w}'(1) &= 0; \\
P_{R_0} \tilde{w}'(0) &= 0; & \qquad P_{R_1} \tilde{w}'(1) &= 0.
\end{alignat*} 
We see from Lemma \ref{zero_eigenspace} that $\tilde{w} (x;0)$ is a 
constant function $\tilde{w}_c$, with 
$\tilde{w}_c \in \ker H(0) = (\ker P_{D_0}) \cap (\ker P_{D_1})$.
In this way, we see that 
\begin{equation*}
w (x;0) = \tilde{w}_c + v_2 x,
\end{equation*}
and taking $\zeta \to 0$ in (\ref{check_relation}) we see that 
\begin{equation*}
\tilde{w}_c + v_2 x = - (\tilde{w}_c + v_2 (1-x)),
\end{equation*}
from which we find 
\begin{equation*}
\tilde{w}_c = - \frac{1}{2} v_2,
\end{equation*}
giving precisely the claim.
\end{proof}

Combining Proposition \ref{S_star} with Proposition \ref{solving_for_w}
see that 
\begin{equation*}
\begin{aligned}
\psi (0) - \psi (1) &=
\frac{1}{2 \pi i} \int_{\Gamma} \zeta^{-1} \Big(w (0;\zeta) - w(1;\zeta)\Big) d\zeta \\
&= w(0;0) - w(1;0) = 
= - P_{R_0} \Lambda_0 P_{R_0} P_1^{(1)} p.
\end{aligned}
\end{equation*}
Using (\ref{just_about_there}), we compute
\begin{equation*}
\begin{aligned}
\langle P^{(1)}_1 T^{(2)}_2 P^{(1)}_1 p, q \rangle_{L^2 (0,1)}
&= 
- (P_{R_0} \Lambda_0 P_{R_0} P_1^{(1)} p, P_{R_0} \Lambda_0 P_{R_0} P_1^{(1)} q)_{\mathbb{C}^n} \\
&= - (P_1^{(1)} (P_{R_0} \Lambda_0 P_{R_0})^2 P_1^{(1)} p, q)_{\mathbb{C}^n}.
\end{aligned}
\end{equation*}
Combining with (\ref{T21}), we conclude that 
\begin{equation*}
\langle P^{(1)}_1 T^{(2)} P^{(1)}_1 p, q \rangle_{L^2 (0,1)}
= 
(P_1^{(1)} (V(0) - (P_{R_0} \Lambda_0 P_{R_0})^2) P_1^{(1)} p, q)_{\mathbb{C}^n}.
\end{equation*}

\begin{remark} \label{HandH}
We emphasize that in this section, we have been working with eigenvalues
$\lambda (s)$ of $H(s)$, and as discussed in Remark \ref{HversusH} 
these are related to the eigenvalues $\lambda_s$ of $H_s$ by 
$\lambda_s = \lambda(s)/s^2$. 
\end{remark}

In view of expansion (\ref{eq1.84}), we see that for any $\lambda_j^{(1)} < 0$
we will have $\lambda (s) \sim \lambda_j^{(1)} s$, and so we will have a
crossing along $\Gamma_1$ at $\lambda_{s_0} \sim \lambda_j^{(1)}/{s_0}$.
I.e., {\it each negative eigenvalue of $B$ corresponds with a crossing of 
$\Gamma_1$}. In addition, for $\lambda_1^{(1)} = 0$, if $\lambda_{1k}^{(2)} < 0$
then $\lambda (s) \sim \lambda_{1k}^{(2)} s^2$, and so we will have a 
crossing along $\Gamma_1$ at $\lambda_{s_0} \sim \lambda_{1k}^{(2)}$.
I.e., {\it each negative eigenvalue of 
$P_1^{(1)} (V(0) - (P_{R_0} \Lambda_0 P_{R_0})^2) P_1^{(1)}$ corresponds with a crossing of 
$\Gamma_1$}.
We conclude that 
\begin{equation*}
\Mas (\ell, \ell_1; \Gamma_1) = - \Mor (H(s)) 
= - \Mor(B) - \Mor \Big(Q (V(0) - (P_{R_0} \Lambda_0 P_{R_0})^2) Q \Big),
\end{equation*}     
where for notational convenience we've taken $Q = P_1^{(1)}$ in the 
statement of Theorem \ref{main}, and we use that notation here for
clarity.
\end{proof}

\subsection{Monotoncity in $s$.} \label{monotonicity_in_s} In our proof of Lemma \ref{Gamma3Lemma},
we established that the rotation of the eigenvalues of $\tilde{W}_{s, \lambda}$
is monotonic along $S^1$ as $\lambda$ increases or decreases. This is not 
generally the case as $s$ increases or decreases, but we'll see that it does
hold under certain conditions. In
order to see when this is possible, we differeniate $\tilde{W}_{s,\lambda}$ with 
respect to $s$.
\begin{lemma} Under the assumptions of Lemma \ref{continuity_lemma}, we have
\begin{align}
 \frac{\partial}{\partial s} \tilde{W}_{s,\lambda}
= i \tilde{W}_{s,\lambda} \tilde{\Omega} (s,\lambda),
\end{align}
where
\begin{equation*}
 \tilde{\Omega} (s,\lambda) = 2\Big((X(s,\lambda)-iZ(s,\lambda))^{-1} \tilde{\mathfrak{B}} \Big)^*
 [X^t(V-\lambda I)X-Z^tZ]
\Big( (X(s,\lambda)-iZ(s,\lambda))^{-1} \tilde{\mathfrak{B}} \Big),
\end{equation*}
is a self-adjoint matrix.
\end{lemma}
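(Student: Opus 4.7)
The plan is to mimic the $\lambda$-derivative computation used in Lemma \ref{Gamma3Lemma}, exploiting the formula $\tilde{W}_{s,\lambda} = (X+iZ)(X-iZ)^{-1}\tilde{\mathfrak{B}}$ and the fact that $\tilde{\mathfrak{B}}$ is constant in $s$. The key structural difference from the $\lambda$ case is that $\partial_s X$ and $\partial_s Z$ are governed directly by the first order system (\ref{first_order}): since $X(s,\lambda)$ and $Z(s,\lambda)$ are obtained by evaluating the fundamental solutions at $x=s$, one has $\partial_s X = Z$ and $\partial_s Z = (V(s)-\lambda I)X$, without any boundary term.

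First I would differentiate $\tilde{W}_{s,\lambda}$ directly using the product and inverse rules, yielding
\begin{equation*}
\partial_s \tilde{W}_{s,\lambda} = (X'+iZ')(X-iZ)^{-1}\tilde{\mathfrak{B}} - \tilde{W}_{s,\lambda}\tilde{\mathfrak{B}}^{*}(X'-iZ')(X-iZ)^{-1}\tilde{\mathfrak{B}},
\end{equation*}
where primes denote $\partial_s$. Multiplying on the left by $\tilde{W}_{s,\lambda}^{*} = \tilde{\mathfrak{B}}^{*}(X^{t}+iZ^{t})^{-1}(X^{t}-iZ^{t})$ (valid since $X,Z$ are real and $\tilde{W}$ is unitary) and combining the two resulting terms over the common right and left factors, the bracketed expression collapses via the algebraic identity
\begin{equation*}
(X^{t}-iZ^{t})(X'+iZ') - (X^{t}+iZ^{t})(X'-iZ') = 2i(X^{t}Z' - Z^{t}X'),
\end{equation*}
exactly as in the proof of Lemma \ref{Gamma3Lemma}.

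Next I would substitute the ODE relations $X'=Z$, $Z'=(V-\lambda I)X$ to get
\begin{equation*}
X^{t}Z' - Z^{t}X' = X^{t}(V-\lambda I)X - Z^{t}Z,
\end{equation*}
which is the precise quantity appearing in $\tilde{\Omega}$. This gives
\begin{equation*}
\tilde{W}_{s,\lambda}^{*}\,\partial_s \tilde{W}_{s,\lambda} = 2i\bigl((X-iZ)^{-1}\tilde{\mathfrak{B}}\bigr)^{*}\bigl[X^{t}(V-\lambda I)X - Z^{t}Z\bigr]\bigl((X-iZ)^{-1}\tilde{\mathfrak{B}}\bigr),
\end{equation*}
and multiplying on the left by $\tilde{W}_{s,\lambda}$ yields the desired formula $\partial_s\tilde{W}_{s,\lambda}=i\tilde{W}_{s,\lambda}\tilde{\Omega}(s,\lambda)$.

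Self-adjointness of $\tilde{\Omega}(s,\lambda)$ is then immediate: the symmetry of $V$ makes $X^{t}(V-\lambda I)X$ symmetric, $Z^{t}Z$ is manifestly symmetric, and these are real matrices so they are self-adjoint on $\mathbb{C}^{n}$; conjugation by the complex matrix $(X-iZ)^{-1}\tilde{\mathfrak{B}}$ preserves self-adjointness. The only subtlety worth flagging is that the formula for $\partial_s\tilde{W}_{s,\lambda}$ presumes invertibility of $X(s,\lambda)-iZ(s,\lambda)$; since $\tilde{\mathcal{M}}_{D}^{2}=(X+iZ)(X-iZ)^{-1}$ is the Cayley transform of the self-adjoint matrix $iM_{D}$ when $X$ is invertible, and more generally one may use the $\mathcal{X}_{D}$-based representative from Lemma \ref{us_lemma}, this invertibility holds automatically. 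I do not anticipate a genuine obstacle beyond bookkeeping; the entire argument is a direct transcription of the $\lambda$-derivative calculation, with $\partial_s$ replacing $\partial_\lambda$ and the ODE relations replacing the variation-of-parameters identities.
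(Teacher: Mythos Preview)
Your proposal is correct and follows essentially the same approach as the paper: differentiate $\tilde{W}_{s,\lambda}=(X+iZ)(X-iZ)^{-1}\tilde{\mathfrak{B}}$ in $s$, left-multiply by $\tilde{W}_{s,\lambda}^*$, collapse the bracket to $2i(X^tZ'-Z^tX')$, substitute the first-order ODE relations $X'=Z$, $Z'=(V-\lambda I)X$, and then left-multiply by $\tilde{W}_{s,\lambda}$. The paper's proof is slightly terser (it omits your remarks on self-adjointness and on the invertibility of $X-iZ$), but the computation is identical.
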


\begin{proof} First, we recall the notation
\begin{equation*}
\tilde{\mathfrak{B}} = (\beta_1^t \beta_1 - \beta_2^t \beta_2) - i 2 \beta_2^t \beta_1.
\end{equation*}
We begin by computing
\begin{align*}
 &\frac{\partial}{\partial s} \tilde{W}_{s,\lambda}
= (X'+iZ')(X-iZ)^{-1} \tilde{\mathfrak{B}} - (X+iZ)(X-iZ)^{-1}(X'-iZ')(X-iZ)^{-1} \tilde{\mathfrak{B}} \\
& = (X'+iZ')(X-iZ)^{-1} \tilde{\mathfrak{B}} 
- \tilde{W}_{s, \lambda} \tilde{\mathfrak{B}}^{-1} (X'-iZ')(X-iZ)^{-1} \tilde{\mathfrak{B}},
\end{align*}
where $\prime$ denotes differentiation with respect to $s$. Upon multiplication of
both sides by $\tilde{W}_{s, \lambda}^*$, we find 
\begin{align*}
 &\tilde{W}_{s,\lambda}^* \frac{\partial}{\partial s} \tilde{W}_{s, \lambda} 
= \tilde{\mathfrak{B}}^* (X^t+iZ^t)^{-1}(X^t-iZ^t)(X'+iZ')(X-iZ)^{-1}\tilde{\mathfrak{B}} \\
& \quad \quad - \tilde{\mathfrak{B}}^{-1} (X'-iZ')(X-iZ)^{-1} \tilde{\mathfrak{B}} \\
 &= \tilde{\mathfrak{B}}^* (X^t+iZ^t)^{-1} 
\Big[(X^t-iZ^t)(X'+iZ')-(X^t+iZ^t)(X'-iZ')\Big] 
(X-iZ)^{-1} \tilde{\mathfrak{B}} \\
 & = \Big((X-iZ)^{-1} \tilde{\mathfrak{B}}\Big)^* [2iX^tZ'-2iZ^tX'] \Big((X-iZ)^{-1} \tilde{\mathfrak{B}}\Big) \\
 & = i \Big((X-iZ)^{-1} \tilde{\mathfrak{B}}\Big)^* [2X^t(V-\lambda I)X-2Z^tZ] \Big((X-iZ)^{-1} \tilde{\mathfrak{B}}\Big)  
= i \tilde{\Omega}.
\end{align*}

We now multiply both sides by $\tilde{W}_{s, \lambda}$ and use the fact that $\tilde{W}_{s, \lambda}$ is 
unitary to see the claim.
\end{proof}

{\it The Dirichlet case at} $x=1$. In the event that the boundary conditions at $x = 1$ are Dirichlet, the frame 
for our target space is ${0 \choose I}$. Even in this special case, we won't generally have monotonicity in 
$s$, but we'll check that we have monotoncity at crossings. 

Fix $\lambda \in [-\lambda_{\infty}, 0]$ and suppose there is a crossing at $s^* \in (s_0,1)$, so 
that $\tilde{W}_{s^*, \lambda}$ has -1 as an eigenvalue (possibly with multiplicity greater than 1). 
Let $V^*$ denote the eigenspace associated with $-1$, so that 
\begin{equation*}
\tilde{W}_{s^*, \lambda} v = - v \quad \forall v \in V^*,
\end{equation*}
and correspondingly (by the definition of $\tilde{W}_{s^*, \lambda}$) we have 
\begin{equation} \label{X_Y_minus1}
(X(s^*, \lambda) - i Z(s^*, \lambda))^{-1} (X(s^*, \lambda) + i Z(s^*, \lambda)) v = -v,
\end{equation}
so that 
\begin{equation*}
(X(s^*, \lambda) + i Z(s^*, \lambda)) v = - (X(s^*, \lambda) - i Z(s^*, \lambda)) v.
\end{equation*}
Rearranging terms, we see that $X (s^*, \lambda) v = 0$, so that $V^*$ corresponds with the 
null space of $X(s^*, \lambda)$. Moreover, if we set $w = (X - iZ)^{-1} v$ and substitute
$v = (X-iZ) w$ into (\ref{X_Y_minus1}), we see that 
\begin{equation*}
(X(s^*, \lambda) + i Z(s^*, \lambda)) w = - (X(s^*, \lambda) - i Z(s^*, \lambda)) w,
\end{equation*} 
where we've recalled that $(X-iZ)^{-1}$ and $(X+iZ)$ commute. We see that $w$ is also 
in $V^*$, so $(X(s^*, \lambda) - i Z(s^*, \lambda))^{-1}$ maps $V^*$ to $V^*$. 

Recall from our proof of Lemma \ref{W} that the rotation of the eigenvalues of $\tilde{W}$ can 
be determined by the motion of the eigenvalues of 
\begin{equation*}
A_{s, \lambda} := i (e^{i \theta} I - \tilde{W}_{s,\lambda})^{-1} (e^{i \theta} I + \tilde{W}_{s,\lambda}),
\end{equation*}  
for which we've seen 
\begin{equation*}
\frac{\partial}{\partial s} A_{s, \lambda} \Big|_{s = s^*} 
= 2 \Big( (e^{i \theta} I - \tilde{W}_{s^*, \lambda})^{-1} \Big)^*
\tilde{\Omega}_{s^*, \lambda} (e^{i \theta} I - \tilde{W}_{s^*, \lambda})^{-1}.
\end{equation*}

According to the Spectral Mapping Theorem, the eigenvalue $-1$ of $\tilde{W}_{s^*, \lambda}$ 
corresponds with the eigenvalue 
\begin{equation*}
a = i (e^{i \theta} + 1)^{-1} (e^{i \theta} I - 1),
\end{equation*} 
and both eigenvalues correspond with the eigenspace $V^*$. Let $P$ denote projection 
onto this space. According, then, to Theorem II.5.4 in \cite{Kato} the motion of 
$a$ as $s$ varies near $s^*$ is determined by the eigenvalues of 
$P A'_{s^*, \lambda} P$, where prime denotes differentiation with respect to $s$.
In order to get a sign for these eigenvalues, we take any vector $v \in \mathbb{C}^n$
and compute 
\begin{equation*}
\begin{aligned}
(P A'_{s^*, \lambda} P v, v)_{\mathbb{C}^n} &= (A'_{s^*, \lambda} P v, P v)_{\mathbb{C}^n} \\
&= 
2 \Big( \Big( (e^{i \theta} I - \tilde{W}_{s^*, \lambda})^{-1} \Big)^*
\tilde{\Omega}_{s^*, \lambda} (e^{i \theta} I - \tilde{W}_{s^*, \lambda})^{-1} Pv, P v\Big)_{\mathbb{C}^n} \\
&= 
2 \Big(\tilde{\Omega}_{s^*, \lambda} (e^{i \theta} I - \tilde{W}_{s^*, \lambda})^{-1} Pv, 
(e^{i \theta} I - \tilde{W}_{s^*, \lambda})^{-1} P v \Big)_{\mathbb{C}^n}.
\end{aligned}
\end{equation*}
Using $\tilde{W}_{s^*, \lambda} Pv = - Pv$, we arrive at 
\begin{equation*}
(P A'_{s^*, \lambda} P v, v)_{\mathbb{C}^n} =
\frac{2}{|e^{i\theta} + 1|^2} (\tilde{\Omega}_{s^*, \lambda} Pv, Pv)_{\mathbb{C}^n}.
\end{equation*}

We see that we need to determine a sign for the matrix $\tilde{\Omega}_{s^*, \lambda}$,
restricted to the space $V^*$. To this end, we compute (with all evaluations 
at $(s^*, \lambda)$)
\begin{equation*}
\begin{aligned}
(\tilde{\Omega} Pv, Pv)_{\mathbb{C}^n} 
&=
\Big( 2\Big((X-iZ)^{-1} \Big)^*
 [X^t(V-\lambda I)X-Z^tZ]
\Big( (X-iZ)^{-1} \Big) Pv, P v \Big)_{\mathbb{C}^n} \\
&=
2 \Big([X^t(V-\lambda I)X-Z^tZ] (X-iZ)^{-1} Pv, 
(X-iZ)^{-1} P v \Big)_{\mathbb{C}^n}.
\end{aligned}
\end{equation*}
where we've observed that for the Dirichlet case $\tilde{\mathfrak{B}} = I$.
Recalling that $(X-iZ)^{-1}$ maps $V^*$ to $V^*$, and that $V^*$ is the 
kernel of $X$,  we see that 
\begin{equation*}
\Big([X^t(V-\lambda I)X] (X-iZ)^{-1} Pv, 
(X-iZ)^{-1} P v \Big)_{\mathbb{C}^n}
= 0,
\end{equation*} 
and so 
\begin{equation*}
(\tilde{\Omega} Pv, Pv)_{\mathbb{C}^n} 
= 
- 2 \Big(Z^tZ (X-iZ)^{-1} Pv, 
(X-iZ)^{-1} P v \Big)_{\mathbb{C}^n} 
\le 0.
\end{equation*}

We conclude that crossings for the Dirichlet case must proceed in the 
clockwise direction as $s$ increases. (We emphasize that we only 
require Dirichlet conditions at $x = 1$.) In particular, the Maslov index
will always be non-increasing as $s$ increases in this case. (See
Figure \ref{Dirichlet_figure}.) Combining this observation with 
our definition of the Maslov index, we see that in the Dirichlet case
we can write 
\begin{equation*}
\Mor (H) = \sum_{s \in [s_0, 1)} \dim \ker (- \frac{d^2}{dx^2} + s^2 V (sx)).
\end{equation*}

\begin{remark} \label{dirichlet_remark} 
The preceding discussion illuminates the manner in which the current analysis 
is a generalization of the Sturm-Liouville oscillation theorem for $n=1$. We 
see that in the case of Dirichlet conditions at $x=1$, the relation of 
negative eigenvalues to zeros of the eigenfunction associated with $\lambda = 0$
is replaced by a relation of negative eigenvalues to the kernel of $X(s,0)$. Precisely,
we have 
\begin{equation*}
\Mor (H) = \sum_{s \in [s_0, 1)} \dim \ker X(s,0).
\end{equation*}
\end{remark}

\section{Applications} \label{applications_section}

In this section we apply our framework to four illustrative examples. All 
calculations were carried out in MATLAB, and the figures were created 
in MATLAB.

We note at the outset that these calculations have been carried out 
to highlight certain observations in our analysis, and that in practice
Theorem \ref{main} only requires a calculation of the Principal Maslov 
Index (along with some matrix eigenvalues). Such a calculation is quite
straightforward, and for convenient reference, we summarize it here. 

\medskip
{\it Calculation of the Principal Maslov Index.} We construct a frame
$\mathbf{X} = {X \choose Z}$ by solving the ODE system (\ref{first_order})
with initial values ${X \choose Z} = {\alpha_2^t \choose - \alpha_1^t}$. 
We then compute the spectral flow of $\tilde{W}_{s, \lambda}$ through 
the point $(-1,0)$; that is, we count the number of eigenvalues, including 
multiplicities, crossing $(-1,0)$ in the counterclockwise direction, and 
subtract the number crossing $(-1,0)$ in the clockwise direction.

\medskip
\noindent
{\bf Example 1 (Dirichlet Case).} We consider (\ref{eq:hill}) with 
\begin{equation*}
V(x) = 
\begin{pmatrix}
-22 & 10 \sin x \\
x & -20
\end{pmatrix},
\end{equation*}
and Dirichlet boundary conditions specified by $\alpha_1, \beta_1 = I$, 
$\alpha_2, \beta_2 = 0$. In this case, there can be no crossings along 
the bottom shelf, and indeed the only allowable behavior is for the 
eigenvalue curves to enter the box through $\Gamma_2$ and move upward
until exiting through $\Gamma_3$. See Figure \ref{Dirichlet_figure}. 
The Principal Maslov Index in this case is $-2$, and according to 
Theorem \ref{main} this means the Morse index is $2$, consistent with
our figure.  

\begin{figure}[ht] 
\begin{center}\includegraphics[%
  width=10cm,
  height=8cm]{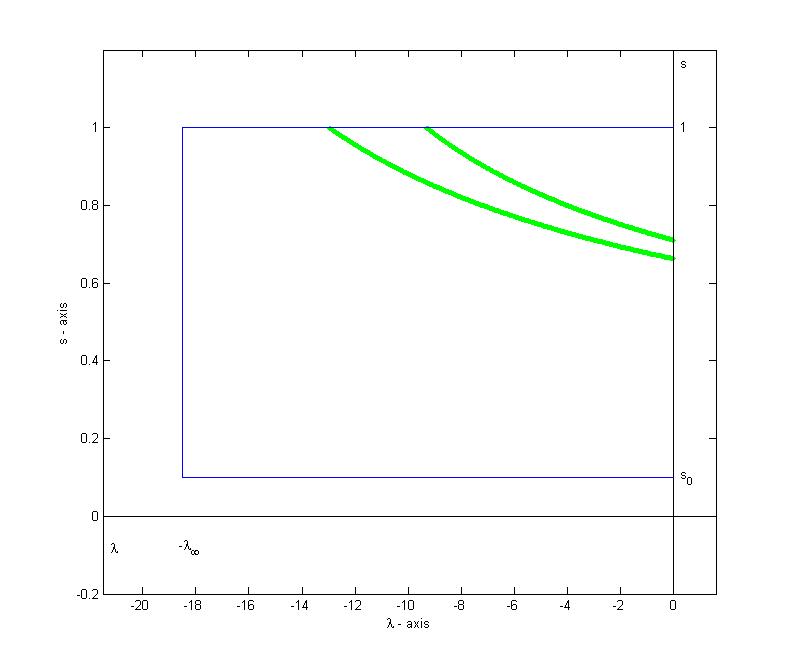}\end{center}
\caption{Eigenvalue curves for Example 1: Dirichlet case. \label{Dirichlet_figure}}
\end{figure}

\medskip
\noindent
{\bf Example 2 (Neumann Case).} We consider (\ref{eq:hill}) with 
\begin{equation*}
V(x) = 
\begin{pmatrix}
-.13 - \frac{.7 \cos(6 \pi x)}{2+\cos(6 \pi x)} & 0 \\
- \frac{\cos(\pi x)}{2+\cos(4 \pi x)} & 1
\end{pmatrix},
\end{equation*}
and Neumann boundary conditions specified by $\alpha_1, \beta_1 = 0$, 
$\alpha_2, \beta_2 = I$. In this case, we see the emergence of 
an eigenvalue from the bottom shelf (corresponding with the second 
order term in our perturbation series), and we notice a very distinct
loss of the monotonicity in $s$ associated with the Dirichlet case.
See Figure \ref{Neumann_figure}. 
The Principal Maslov Index in this case is $0$, and according to 
Theorem \ref{main} the Morse index of $H$ is the Morse index of 
$V(0)$ (because $B = 0$ and $Q = I$). The eigenvalues of $V(0)$
are $-.3633$ and $1$, so that $\Mor (V(0)) = 1$, and indeed we 
see that the eigenvalue emerges from $s = 0$ at $-.3633$.

\begin{figure}[ht] 
\begin{center}\includegraphics[%
  width=10cm,
  height=8cm]{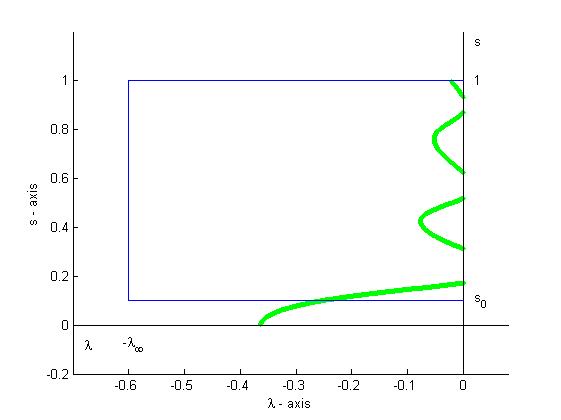}\end{center}
\caption{Eigenvalue curves for Example 2: Neumann case. \label{Neumann_figure}}
\end{figure}

\medskip
\noindent
{\bf Example 3 (Neumann-based Case, I: First Order Perturbation Terms).} 
We consider (\ref{eq:hill}) with 
\begin{equation*}
V(x) = 
\begin{pmatrix}
-13 + 12 x^2 & -7 \cos x \\
- x & -9
\end{pmatrix},
\end{equation*}
and Neumann-based boundary conditions specified by 
$\alpha_1 = \frac{1}{\sqrt{2}} I$, $\beta_1 = 0$, 
$\alpha_2 = \frac{1}{\sqrt{2}} I$, and $\beta_2 = I$. In this case, 
we see an eigenvalue curve entering through $\Gamma_2$, and 
also two curves entering through $\Gamma_1$ (corresponding with the first 
order term in our perturbation series). The Principal Maslov Index 
in this case is $-1$, and according to Theorem \ref{main} the 
contribution from the bottom shelf to the Morse index of $H$ 
will be the Morse index of $B = - \alpha_2^{-1} \alpha_2 = - I$,
which is clearly 2. We conclude that $\Mor (H) = 3$, as indicated
by Figure \ref{Neumann_based1}.

\begin{figure}[ht] 
\begin{center}\includegraphics[%
  width=10cm,
  height=8cm]{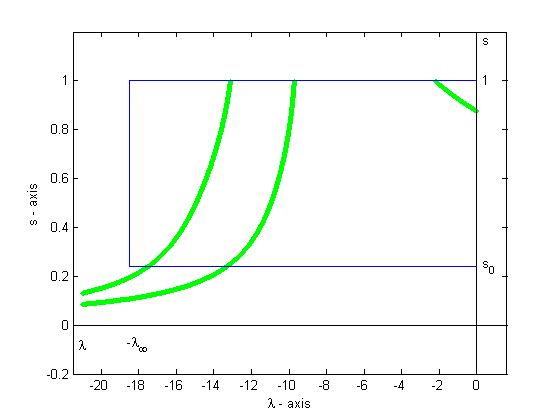}\end{center}
\caption{Eigenvalue curves for Example 3: First Order Perturbation Terms. \label{Neumann_based1}}
\end{figure}

\medskip
\noindent
{\bf Example 4 (Neumann-based Case, II: Second Order Perturbation Terms).} 
We consider (\ref{eq:hill}) with 
\begin{equation*}
V(x) = 
\begin{pmatrix}
-10 - 5 x^2 & - 3 x \\
- 9 \sin x & - 5 - 7 x^2
\end{pmatrix},
\end{equation*}
and Neumann-based boundary conditions specified by 
$\alpha_1 = \frac{1}{\sqrt{2}} I$, $\beta_1 = \frac{1}{\sqrt{2}} I$, 
$\alpha_2 = \frac{1}{\sqrt{2}} I$, and $\beta_2 = \frac{1}{\sqrt{2}} I$. 
In this case, we see an eigenvalue curve entering through $\Gamma_2$, and
two eigenvalue curves entering through $\Gamma_1$ (corresponding with the 
second order term in our perturbation series). The Principal Maslov Index 
in this case is $-1$, and according to Theorem \ref{main} the 
contribution from the bottom shelf to the Morse index of $H$ 
will be the Morse index of $V(0) - (\alpha_2^{-1} \alpha_1)^2$ 
(because $B = 0$ and $Q = I$). The eigenvalues of 
$V(0) - (\alpha_2^{-1} \alpha_1)^2$ are $-11$ and $-6$. We see that the 
Morse index of this matrix is $2$, and indeed that the eigenvalues 
that come in through the bottom shelf originate when $s = 0$ at 
$\lambda = -11$ and $\lambda = -6$. We conclude that the Morse index
of $H$ is 3 in this case, as indicated in Figure \ref{Neumann_based2}.   

\begin{figure}[ht] 
\begin{center}\includegraphics[%
  width=10cm,
  height=8cm]{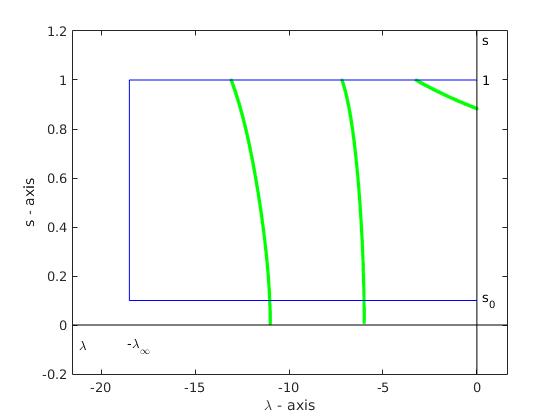}\end{center}
\caption{Eigenvalue curves for Example 4: Second Order Perturbation Terms. \label{Neumann_based2}}
\end{figure}

\bigskip
{\it Acknowledgements.}  The authors are indebted to Gregory Berkolaiko 
for directing them to the elegant formulation of self-adjoint 
boundary conditions in \cite{BK}.

\end{document}